\newcommand\reallywidehat[1]{%
	\savestack{\tmpbox}{\stretchto{%
			\scaleto{%
				\scalerel*[\widthof{\ensuremath{#1}}]{\kern-.6pt\bigwedge\kern-.6pt}%
				{\rule[-\textheight/2]{1ex}{\textheight}}
			}{\textheight}%
		}{0.5ex}}%
	\stackon[1pt]{#1}{\tmpbox}%
}
\newcommand\imCMsym[4][\mathord]{%
	\DeclareFontFamily{U} {#2}{}
	\DeclareFontShape{U}{#2}{m}{n}{
		<-6> #25
		<6-7> #26
		<7-8> #27
		<8-9> #28
		<9-10> #29
		<10-12> #210
		<12-> #212}{}
	\DeclareSymbolFont{CM#2} {U} {#2}{m}{n}
	\DeclareMathSymbol{#4}{#1}{CM#2}{#3}
}
\CompileMatrices\SelectTips{cm}{12}
 \providecommand\@dotsep{5} 
\theoremstyle{plain}
\newtheorem{Thm}{\sc Theorem}[section]
\newtheorem{Theorem}[Thm]{\sc Theorem}
\newtheorem{Corollary}[Thm]{\sc Corollary}
\newtheorem*{Corollary*}{\sc Corollary}
\newtheorem{Proposition}[Thm]{\sc Proposition}
\newtheorem*{Proposition*}{\sc Proposition}
\newtheorem{Lemma}[Thm]{\sc Lemma}
\theoremstyle{definition}
\newtheorem{Definition}[Thm]{Definition}
\theoremstyle{remark}
\newtheorem{Remark}[Thm]{Remark}
\newtheorem{Example}[Thm]{Example}
\newtheorem*{Example*}{Example}
\newtheorem*{Remark*}{Remark}
\renewcommand{\AA}{{\mathbb A}}
\newcommand{\CC}{{\mathbb C}}
\newcommand{\EE}{{\mathbb E}}
\newcommand{\FF}{{\mathbb F}}
\newcommand{\GG}{{\mathbb G}}
\newcommand{\ZZ}{{\mathbb Z}}
\newcommand{\cE}{{\mathcal E}}
\newcommand{\cO}{{\mathcal O}}
\newcommand{\cU}{{\mathcal U}}
\newcommand{\Pic}{{\mathop{\rm Pic \, }}}
\newcommand{\Coh}{{\mathop{\operatorname{Coh}\, }}}
\newcommand{\Hom}{{\mathop{{\rm Hom}}}}
\newcommand{\id}{\mathop{\rm Id}}
\newcommand{\Ext}{{\mathop{{\rm Ext \,}}}}
\newcommand{\Spec}{{\mathop{{\rm Spec\, }}}}
\newcommand{\Vect}{{\mathop{{\rm Vect \,}}}}
\def \Coh {{\sf Coh}}
\def \perf {{\sf perf}}
\def \Vect {{\sf Vect}}
\newcommand\Fdiv {{F\sf{\textendash div}}}
\newcommand{\unit}{{\mathbbm{ 1}}}
\begin{document}
	
	\markboth {$F$-divided bundles}{$F$-divided bundles}

	\title{Relative Gieseker's problem on $F$-divided bundles}
	\author{Adrian Langer}
	\date{
        \today
    }
	
	\maketitle
	

	{\noindent \sc Address:}\\
	Institute of Mathematics, University of Warsaw,
	ul.\ Banacha 2, 02-097 Warszawa, Poland\\
	e-mail: {\tt alan@mimuw.edu.pl}

\begin{abstract}
Let $f: X\to Y$ be a proper surjective morphism of varieties defined over an algebraically closed field of positive characteristic. We prove that if $f$ has geometrically connected fibers then the induced homomorphism of $F$-divided fundamental groups is faithfully flat.  An important new ingredient in our proof is an analogue of B. Bhatt's and P. Scholze's descent theorem \cite[Theorem 1.3]{Bhatt-Scholze2017} for $F$-divided bundles.

As a corollary, we prove that in general if $X$ is normal, $Y$ is smooth, both $X$ and $Y$ are projective, and the induced map on \'etale fundamental groups is surjective,  then the corresponding homomorphism on $F$-divided fundamental groups is faithfully flat. We also establish an analogous result for isomorphisms. This generalizes and strengthens a recent result of X. Sun and L. Zhang \cite{Sun-Zhang2025}, which in turn generalized earlier results of H. Esnault and V. Mehta \cite{Esnault-Mehta2010} and I. Biswas, M. Kumar, and A. J. Parameswaran \cite{Biswas-Parameswaran-Kumar2025}.
\end{abstract}

\section*{Introduction}

Let $X$ be a connected scheme of finite type over some algebraically closed field $k$.
In \cite[Appendix]{Grothendieck1968} A. Grothendieck defined a \emph{coherent stratified sheaf} on $X/k$ as
a coherent sheaf $E$ on $X$ together with an isomorphism $\varphi:  p_1^*E \xrightarrow{\simeq} p_2^*E$ on the formal completion $\widehat{X\times _kX}$ of $X\times _kX$ along the diagonal, subject to the cocycle condition $p_{23}^*(\varphi) \circ p_{12}^*(\varphi) = p_{13}^*(\varphi)$ on the formal completion \( \reallywidehat{X \times_k X \times_k X}\) of $X\times _k X\times _k X$ along the diagonal. Such stratified sheaves can also be viewed as {crystals}  on the infinitesimal site $(X/k)_{\sf inf}$. In a later work \cite{Grothendieck1970},  Grothendieck used stratified sheaves  to interpret the Riemann-Hilbert correspondence for proper complex varieties. Namely, if $X$ is a proper complex variety then the category of representations of the topological fundamental group $\pi _1^{\sf top} (X^{\sf an})$ of the analytification of $X$ is equivalent to the category $\Coh^{\sf strat}(X)$ of coherent stratified sheaves on $X/\CC$ (see \cite[(4.1.5)]{Grothendieck1970}). 
In particular,  $\Coh^{\sf strat}(X)$ is equivalent  to the category  $\Vect^{\sf strat}(X)$ of stratified vector bundles on $X/\CC$. Later N. Saavedra Rivano in \cite[Chapitre VI, 1.2]{Saavedra72} proved that in general the category $\Vect^{\sf strat}(X)$ is  Tannakian and  hence when fixing a $k$-point of $X$, Tannakian duality gives us a stratified fundamental group $ \pi^{\sf strat}_1(X)$. This is a purely algebraic object, and for complex varieties its representations are equivalent to that of $\pi _1^{\sf top} (X^{\sf an})$. Using this language we can then reformulate (and slightly strengthen) \cite[Theoreme 4.2]{Grothendieck1970} as follows:

\begin{Theorem}\label{main1}
Let $f: X\to Y$ be a morphism of proper connected schemes over $k=\CC$.  
\begin{enumerate}
\item 
	If  $f_*: \pi^{\sf{\acute et}}_1(X)\to\pi^{\sf{\acute et}}_1(Y) $ is surjective then
	$f_*: \pi^{\sf strat}_1(X)\to\pi^{\sf{strat}}_1(Y) $  is faithfully flat.
	\item  If 	$f_*: \pi^{\sf{\acute et}}_1(X)\to\pi^{\sf{\acute et}}_1(Y) $ is an isomorphism then
	$f_*: \pi^{\sf strat}_1(X)\to\pi^{\sf{strat}}_1(Y) $ is an isomorphism.
\end{enumerate}
\end{Theorem}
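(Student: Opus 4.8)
The plan is to pass through the Riemann--Hilbert correspondence recalled above and thereby reduce everything to a purely group-theoretic statement about the finitely generated groups $\Gamma_X:=\pi_1^{\sf top}(X^{\sf an})$ and $\Gamma_Y:=\pi_1^{\sf top}(Y^{\sf an})$. Writing $\phi:=f_*\colon\Gamma_X\to\Gamma_Y$, the equivalences $\Vect^{\sf strat}(X)\simeq\Rep_{\CC}(\Gamma_X)$ and $\Vect^{\sf strat}(Y)\simeq\Rep_{\CC}(\Gamma_Y)$ identify $\pi_1^{\sf strat}(X)$ with the proalgebraic (Tannakian) completion of $\Gamma_X$ and identify $f_*$, on the level of representation categories, with the restriction functor $\phi^{*}\colon\Rep_{\CC}(\Gamma_Y)\to\Rep_{\CC}(\Gamma_X)$. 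Since by the Riemann existence theorem $\pi_1^{\sf{\acute et}}(-)$ is the profinite completion of $\pi_1^{\sf top}((-)^{\sf an})$, the hypothesis of (1) becomes the surjectivity of $\hat\phi\colon\hat\Gamma_X\to\hat\Gamma_Y$, equivalently that $A:=\im\phi$ is \emph{profinitely dense} in $\Gamma_Y$ (it surjects onto every finite quotient), and the hypothesis of (2) becomes that $\hat\phi$ is an isomorphism. By the Tannakian dictionary relating homomorphisms of affine group schemes to their functors of representations, it then suffices to show that $\phi^{*}$ is fully faithful with essential image closed under subobjects in case (1), and that $\phi^{*}$ is an equivalence in case (2).

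The core of (1) is the following density assertion, which I would prove by Malcev's reduction-modulo-$\mathfrak m$ technique. \textbf{Key Lemma.} \emph{For every $\rho\colon\Gamma_Y\to\GL(V)$ the Zariski closures of $\rho(A)$ and of $\rho(\Gamma_Y)$ in $\GL(V)$ coincide.} To prove it, pick generators of $\Gamma_Y$ and let $R\subseteq\CC$ be the finitely generated $\ZZ$-subalgebra generated by the entries of their images and the inverses of the relevant determinants, so that $\rho(\Gamma_Y)\subseteq\GL(V)(R)$. For each maximal ideal $\mathfrak m\subset R$ the residue field $R/\mathfrak m$ is finite, and reduction yields a finite quotient $\Gamma_Y\twoheadrightarrow\rho(\Gamma_Y)\bmod\mathfrak m$; profinite density therefore forces $\rho(A)$ and $\rho(\Gamma_Y)$ to have the \emph{same} image modulo every $\mathfrak m$. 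Suppose now that the closure $H$ of $\rho(A)$ were a proper subgroup of the closure $G$ of $\rho(\Gamma_Y)$. By Chevalley's theorem $H$ is the stabilizer in $G$ of a line $L$ inside some algebraic representation of $G$; as $\rho(\Gamma_Y)$ is Zariski dense and $G$ does not preserve $L$, some $\gamma_0\in\Gamma_Y$ satisfies $\rho(\gamma_0)L\neq L$, and this non-equality is witnessed by the non-vanishing of finitely many elements of $R$, which I invert. Reducing modulo any $\mathfrak m$ in the resulting nonempty open subscheme of $\Spec R$ --- of which there are infinitely many with finite residue field, by the Nullstellensatz for finitely generated $\ZZ$-algebras --- the element $\gamma_0$ still moves the reduced line while all of $A$ fixes it, so $\rho(A)$ and $\rho(\Gamma_Y)$ differ modulo $\mathfrak m$, a contradiction. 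Granting the Key Lemma, any $\rho(A)$-stable subspace is stable under the common Zariski closure and hence under $\rho(\Gamma_Y)$; applying this to $V^{\vee}\otimes W$ yields at once the closure under subobjects and the full faithfulness of $\phi^{*}$, which is (1).

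For (2) I would add essential surjectivity of $\phi^{*}$, in two steps. \emph{Step A:} every $\sigma\in\Rep_{\CC}(\Gamma_X)$ factors through $A$. Indeed, injectivity of $\hat\phi$ gives $\ker\phi\subseteq\bigcap_{M}M$, the intersection of all finite-index subgroups of $\Gamma_X$; reducing $\sigma$ modulo maximal ideals as in the Key Lemma shows that $\sigma(\ker\phi)$ is congruent to the identity modulo every $\mathfrak m$ and hence, the nilradical being zero, is trivial. \emph{Step B:} a representation $\bar\sigma$ of the finitely generated group $A$ extends along $A\hookrightarrow\Gamma_Y$ to a representation of $\Gamma_Y$. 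Here the isomorphism $\hat\phi$ makes $A\hookrightarrow\Gamma_Y$ induce an isomorphism $\hat A\xrightarrow{\sim}\hat\Gamma_Y$, so the finite quotients of $A$ and of $\Gamma_Y$ are matched; consequently each reduction $\bar\sigma_{\mathfrak m}$, which factors through a finite quotient of $A$, extends \emph{uniquely} to $\tau_{\mathfrak m}\colon\Gamma_Y\to\GL(V)(R/\mathfrak m)$, and these are compatible as $\mathfrak m$ varies. What remains is to assemble the family $(\tau_{\mathfrak m})$ into a single complex representation of $\Gamma_Y$ whose restriction to $A$ is $\bar\sigma$.

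I expect this last assembly --- descending the compatible family $(\tau_{\mathfrak m})$ from all of its reductions back to characteristic zero --- to be the main obstacle. I would organize it through the representation scheme $\mathcal{R}:=\underline{\Hom}(\Gamma_Y,\GL(V))$, which is of finite type over $\ZZ$ because $\Gamma_Y$ is finitely generated, together with the restriction morphism $\mathcal{R}\to\underline{\Hom}(A,\GL(V))$. Step~B says that the point $[\bar\sigma]$ of the target acquires a preimage over every residue field $R/\mathfrak m$; since the image of this morphism is constructible by Chevalley's theorem and the closed points with finite residue field are dense, a spreading-out argument then produces a preimage of $[\bar\sigma]$ itself, i.e.\ the sought extension $\tau$. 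Combining Steps~A and~B gives essential surjectivity, and with the full faithfulness from (1) this makes $\phi^{*}$ an equivalence, proving (2). The delicate points, as in Grothendieck's original proof of Th\'eor\`eme~4.2, are the uniqueness and mutual compatibility of the $\tau_{\mathfrak m}$ and the final constructibility-and-specialization step.
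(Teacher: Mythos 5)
Your proposal is correct and takes essentially the same approach as the paper: the paper gives no independent proof of this statement, deferring to Grothendieck's Th\'eor\`eme 4.2 in \cite{Grothendieck1970} and noting explicitly that the proof rests on the Riemann--Hilbert correspondence together with the Grothendieck--Malcev theorem, which is exactly the route you reconstruct. The ingredients you supply --- translating \'etale surjectivity into profinite density, Malcev-style reduction modulo maximal ideals of a finitely generated $\ZZ$-algebra, Chevalley's line-stabilizer argument for the Key Lemma, the Tannakian criterion of Deligne--Milne, and the constructibility/Jacobson assembly of the extensions $\tau_{\mathfrak m}$ for essential surjectivity --- are precisely the standard details behind the cited Grothendieck--Malcev theorem.
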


The result above is often stated only for smooth complex varieties, where a stratified vector bundle is equivalent to a vector bundle equipped with an integrable connection, or, equivalently, a $D$-module. Although all the objects involved are algebraic, the proof of Theorem \ref{main1} relies on analytic methods. In particular, it depends on the version of the Riemann--Hilbert correspondence mentioned earlier, as well as on the Grothendieck--Malcev theorem, which relates representations of a finitely generated group to its profinite completion. 

\medskip

If $k$ has positive characteristic then the category  $\Vect^{\sf strat}(X)$ of stratified vector bundles on $X/k$ is equivalent to the category  $\Vect^\perf(X)$ of so called $F$-divided bundles (see \cite[Proposition 3.4]{Esnault-Srinivas2016}).  An \emph{$F$-divided vector bundle} on	$X$ is a sequence $\EE:=\{E_i, \sigma_i\} _{i\in \ZZ_{\ge 0}}$ of vector bundles  $E_i$ on $X$ and $\cO_X$-isomorphisms $\sigma _i:F_{X}^*E_{i+1}\xrightarrow{\simeq} E_i$. Again if $X$ is smooth then giving a stratified vector bundle is equivalent to giving a $D$-module (see Katz's \cite[Theorem 1.3]{Gieseker1975}). However, this condition is significantly stronger than merely specifying a vector bundle with an integrable connection.

As in the characteristic zero case, $\Vect^\perf(X)$ is a Tannakian  category and when fixing a $k$-point, Tannakian duality gives rise to an $F$-divided fundamental group $ \pi^{F\sf{\textendash div}}_1(X)$.
Then we have the following analogue of Grothendieck's Theorem \ref{main1}
(see Theorem \ref{faithful-flatness-for-morphisms} and Corollary \ref{cor:isomorphism-for-morphisms}).

\begin{Theorem} \label{main3}
Let $f: X\to Y$ be a surjective morphism of normal projective varieties defined over an algebraically closed field of positive characteristic.  Assume that $Y$ is smooth.
\begin{enumerate}
	\item 
	If  $f_*: \pi^{\sf{\acute et}}_1(X)\to\pi^{\sf{\acute et}}_1(Y) $ is surjective then
	$f_*: \pi^{F\sf{\textendash	div}}_1(X)\to\pi^{F\sf{\textendash div}}_1(Y) $ is faithfully flat.
	\item  If $f_*: \pi^{\sf{\acute et}}_1(X)\to\pi^{\sf{\acute et}}_1(Y) $ is an isomorphism then
	$f_*: \pi^{F\sf{\textendash	div}}_1(X)\to\pi^{F\sf{\textendash div}}_1(Y) $ is an isomorphism.
\end{enumerate}
\end{Theorem}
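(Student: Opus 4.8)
The plan is to pass to the Tannakian side and argue entirely about the pullback tensor functor $f^{*}\colon \Vect^\perf(Y)\to\Vect^\perf(X)$, which induces $f_{*}\colon\piF_1(X)\to\piF_1(Y)$. By the standard Tannakian dictionary a homomorphism of affine group schemes is faithfully flat exactly when the associated pullback functor is fully faithful with essential image closed under subobjects, and is an isomorphism exactly when that functor is an equivalence. Hence for (1) I must show that $f^{*}$ is fully faithful and that every $F$-divided subbundle of an object $f^{*}\EE$ descends to $Y$, while for (2), observing that an isomorphism of étale fundamental groups is in particular surjective, it suffices to add essential surjectivity of $f^{*}$.

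First I would remove the finite part of $f$. Passing to the Stein factorization $X\xrightarrow{f'}Z\xrightarrow{g}Y$, surjectivity of $f_{*}$ on $\pi^{\sf{\acute et}}_1$ forces the finite morphism $g$ to be purely inseparable, since it can admit no nontrivial connected finite étale subcover. As a large enough power of the Frobenius of $Y$ then factors through $g$, pullback along $g$ induces an equivalence of $F$-divided categories, and one reduces to the case $f_{*}\cO_X=\cO_Y$ with geometrically connected fibres. In this situation full faithfulness becomes a section computation: by rigidity and internal Hom it reduces to the bijectivity of $\Hom(\unit_Y,\mathcal G)\to\Hom(\unit_X,f^{*}\mathcal G)$ for every $F$-divided bundle $\mathcal G=\{G_i,\sigma_i\}$ on $Y$, and since a morphism out of $\unit$ is a Frobenius-compatible family of global sections, the projection formula $H^{0}(X,f^{*}G_i)=H^{0}(Y,G_i\otimes f_{*}\cO_X)=H^{0}(Y,G_i)$ identifies the two inverse limits.

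The substance of the argument, and the point at which the announced $F$-divided analogue of the Bhatt--Scholze descent theorem \cite{Bhatt-Scholze2017} is indispensable, is the descent of subobjects in (1) and essential surjectivity in (2). My approach is to manufacture descent data along $f$ out of full faithfulness itself. For essential surjectivity, given an $F$-divided bundle $\mathbb V$ on $X$ I consider the two projections $p_1,p_2\colon X\times_Y X\to X$; full faithfulness, applied now to $p_1$ and $p_2$ (which again have geometrically connected fibres and are surjective on $\pi^{\sf{\acute et}}_1$), forces a canonical isomorphism $p_1^{*}\mathbb V\cong p_2^{*}\mathbb V$ over $X\times_Y X$ and, over the triple fibre product, the cocycle identity. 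The $F$-divided descent theorem, applied to the proper surjection $f$ viewed as a $v$-cover, then makes this datum effective and yields $\mathbb W$ on $Y$ with $f^{*}\mathbb W\cong\mathbb V$. The same mechanism handles (1): the two pullbacks of a subbundle $\mathcal S\subset f^{*}\EE$ agree inside $(f p_1)^{*}\EE=(f p_2)^{*}\EE$, so $\mathcal S$ acquires descent data and descends. Smoothness of $Y$ in (2) enters through Kunz's theorem, regularity making the Frobenius of $Y$ flat, which is what guarantees that the descended coherent system $\{W_i\}$ is locally free and genuinely $F$-divided.

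The main obstacle I anticipate lies in controlling the fibre products $X\times_Y X$ and $X\times_Y X\times_Y X$, which are in general neither normal nor irreducible, yet on which I must both verify the hypotheses needed to invoke full faithfulness and run the descent. This is exactly why a robust $v$-topological descent statement for $F$-divided bundles, rather than a naive faithfully flat or proper descent, is required: it is what converts the purely arithmetic input, namely control of finite étale covers of $Y$, into the geometric conclusion that $F$-divided bundles glue along the single surjection $f$.
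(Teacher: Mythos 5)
Your high-level Tannakian framing (full faithfulness plus closure of the essential image under subobjects gives faithful flatness, via \cite[Proposition 2.21]{Deligne-Milne1982}, and essential surjectivity upgrades this to an isomorphism) agrees with the paper, as does the projection-formula argument for full faithfulness when $f_*\cO_X=\cO_Y$. However, both steps that carry the actual content of the theorem are broken. The first gap is the Stein factorization reduction: surjectivity of $f_*$ on $\pi^{\sf{\acute et}}_1$ does \emph{not} force the finite part $h\colon Z\to Y$ to be purely inseparable. It only forces $h$ to admit no nontrivial \'etale subcover, i.e.\ (after splitting off the purely inseparable part of $K(Z)/K(Y)$, which is the reduction the paper actually performs) to be \emph{genuinely ramified}. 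For example $\PP^1\to\PP^1$, $x\mapsto x^2$, in characteristic $\ne 2$ is separable, trivially surjective on \'etale fundamental groups, and no power of Frobenius factors through it. Handling this genuinely ramified finite case is the bulk of the paper's Section 3: full faithfulness of $h^*$ needs Corollary \ref{fully-faithful-for-finite-morphism} together with Proposition \ref{Jordan-Holder-for-F-divided-bundles}, closure under subobjects needs Lemma \ref{pullback-of-stable-is-stable}, and part (2) needs Theorem \ref{faithful-flatness-for-finite-morphisms}, whose reliance on \cite{Biswas-Parameswaran-Kumar2025} and Zariski--Nagata purity is the \emph{only} reason the hypothesis that $Y$ is smooth appears in the statement. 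Your proposal never confronts this case at all, and your suggestion that smoothness enters through Kunz's theorem is a symptom of this: Kunz plays no role in the paper's argument.

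The second gap is that your descent mechanism is circular. Full faithfulness of $p_1^*$ and $p_2^*$ says that $\Hom$-sets of objects pulled back from $X$ are unchanged; it produces no isomorphism $p_1^*\VV\cong p_2^*\VV$ whatsoever, and the existence of such a cocycle datum is essentially equivalent to the conclusion that $\VV$ descends, which is what you are trying to prove (the same objection applies to your claim that the two pullbacks of a subobject ${\mathcal S}\subset f^*\EE$ ``agree''). Note that your essential-surjectivity argument never uses the hypothesis that $f_*$ is an isomorphism on $\pi^{\sf{\acute et}}_1$; if it were valid, it would show that every $F$-divided bundle descends along any proper fibration with geometrically connected fibres. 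That is false: take $Y=\Spec k$ and $X$ an ordinary elliptic curve; then $p_1,p_2\colon X\times X\to X$ satisfy all of your hypotheses, yet $\Ext^1(\unit_X,\unit_X)=H^1(X,\cO_X)\ne 0$ (see the Remark following Theorem \ref{faithful-flatness-for-morphisms}), so nontrivial $F$-divided bundles on $X$ exist and cannot descend to the point. Relatedly, the paper's Theorem \ref{main2} (Theorem \ref{descent-for-proper-fibrations-old}) is not a descent-data statement: its criterion for descending $\EE$ is triviality of $\EE$ on all geometric fibres of $f$, and subobjects are handled by observing that a subobject of a trivial object in the Tannakian category of a fibre is trivial (Corollary \ref{proper-fibration-relative-gerbe}). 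Verifying that fibrewise-triviality criterion in part (2) is precisely where the arithmetic core of the paper lies: spreading out over a finitely generated $\FF_p$-algebra, Sun's representation schemes, Hrushovski's theorem on periodic points of dominant rational self-maps, and Katz's equivalence between Frobenius-periodic bundles and representations of $\pi^{\sf{\acute et}}_1$. Since the special case $Y=\Spec k$ with $X$ smooth, projective and simply connected is exactly Gieseker's conjecture (the Esnault--Mehta theorem), no purely categorical argument of the kind you propose can succeed.
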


The absolute version of the above theorem when $X$ is smooth and $Y$ is a point was proven in \cite{Esnault-Mehta2010} 
in response to a problem posed by Gieseker  (see \cite[\textsection 2]{Gieseker1975}). Recently, this result was generalized to normal proper varieties  in \cite{Langer-Zhang2025}.  Theorem \ref{main3} positively answers \cite[Question 3.2 (i)]{Esnault2013}  in the case of a morphism between from normal to smooth projective variety. Note that for our proof it is essential to consider normal varieties even if one is interested only in smooth varieties as in the original formulation of the question. 

The first part of the above theorem was previously unknown. The second part, in the case where both $X$ and $Y$ are smooth, appears in \cite{Sun-Zhang2025}, who provide a different and more involved proof.  In the special case where $f$ is a finite genuinely ramified morphism, the second part of the above theorem was proven for smooth varieties in \cite{Biswas-Parameswaran-Kumar2025} and this result plays a crucial role in our proof. This is the only reason why we add the assumption on smoothness of $Y$. Unfortunately, in the proofs of both \cite[Proposition 4.1]{Biswas-Parameswaran-Kumar2025} and \cite[Lemma 3.2]{Sun-Zhang2025}, the smoothness of $Y$ is essential, and the behaviour of \'etale fundamental groups under spreading out of a finite morphism between general normal varieties is not well understood. However, this assumption is not needed if $f$ has geometrically connected fibers:

\begin{Theorem}\label{main4}
Let $f: X\to Y$ be a proper surjective morphism of varieties defined over an algebraically closed field of positive characteristic.  Assume that  $f$ has geometrically connected fibers. Then $f_*: \pi^{F\sf{\textendash	div}}_1(X)\to\pi^{F\sf{\textendash div}}_1(Y) $ is faithfully flat. Moreover, if $X$ and $Y$ are projective and $f_*: \pi^{\sf{\acute et}}_1(X)\to\pi^{\sf{\acute et}}_1(Y) $ is injective then
$f_*: \pi^{F\sf{\textendash	div}}_1(X)\to\pi^{F\sf{\textendash div}}_1(Y) $ is an isomorphism.
\end{Theorem}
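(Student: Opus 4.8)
The plan is to prove Theorem~\ref{main4} by reducing the two assertions to the already-available machinery. Recall that a homomorphism of affine group schemes $f_*: G\to H$ over $k$ is faithfully flat precisely when the induced pullback functor $f^*: \Rep(H)\to\Rep(G)$ is fully faithful with the property that every subobject of an object in the essential image comes from a subobject in $\Rep(H)$; under Tannakian duality this translates into a concrete statement about $F$-divided bundles. Thus I would first reformulate ``$f_*$ is faithfully flat'' as: for every $F$-divided bundle $\EE$ on $X$, the largest $F$-divided subbundle generated by ``$\EE$ coming from $Y$'' behaves well, and more usefully, faithful flatness is equivalent to the statement that $f^*\Vect^\perf(Y)\to\Vect^\perf(X)$ is fully faithful and closed under taking subobjects. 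Concretely, I would show (a) full faithfulness amounts to $f_*\cO_X=\cO_Y$ together with the analogous statement after all Frobenius twists, i.e. that $f$ remains ``cohomologically connected'' along the Frobenius tower, and (b) the subobject condition follows from descent.

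\emph{First assertion (faithful flatness from geometrically connected fibers).} The key point is that a proper surjective morphism with geometrically connected fibers satisfies $f_*\cO_X=\cO_Y$ after taking the reduction/normalization, and crucially this persists under the relative Frobenius: the fibers of $f$ remain geometrically connected, so the Stein factorization of $f$ is trivial at every finite level. I would argue that full faithfulness of $f^*$ on $F$-divided bundles reduces to showing $\Hom(\EE,\FF)=\Hom(f^*\EE,f^*\FF)$ for $\EE,\FF\in\Vect^\perf(Y)$, which unwinds to $H^0(X,f^*\cHom(\EE,\FF))=H^0(Y,\cHom(\EE,\FF))$; since a homomorphism sheaf of $F$-divided bundles is again $F$-divided, and an $F$-divided bundle with a global section at one level forces compatible sections at all levels, the projection formula together with $f_*\cO_X=\cO_Y$ gives the equality. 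The closure-under-subobjects property is where Bhatt--Scholze descent enters: given an $F$-divided subbundle $\EE'\subset f^*\FF$ on $X$, I want to descend it to $Y$. Because $f$ has geometrically connected fibers, the two pullbacks of $\EE'$ to $X\times_Y X$ agree (there is no room for a nontrivial automorphism of the pullback across connected fibers at the level of $F$-divided structure), so the $F$-divided analogue of \cite[Theorem 1.3]{Bhatt-Scholze2017} produces a descended $F$-divided subbundle on $Y$. This yields faithful flatness.

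\emph{Second assertion (isomorphism in the projective case).} Given the first part, $f_*$ is already faithfully flat, so it remains to prove it is a closed immersion, equivalently that $f^*:\Vect^\perf(Y)\to\Vect^\perf(X)$ is essentially surjective onto a subcategory closed under subquotients and that every $F$-divided bundle on $X$ is a subquotient of one pulled back from $Y$. I would use the injectivity hypothesis on étale fundamental groups to control the relevant finite covers: injectivity of $f_*^{\et}$ says that every connected finite étale cover of $X$ is dominated by the pullback of one from $Y$, which, combined with the absolute results on normal projective varieties from \cite{Langer-Zhang2025} identifying the ``étale quotient'' of $\pi_1^{\Fdiv}$, forces the kernel of $f_*: \pi_1^{\Fdiv}(X)\to\pi_1^{\Fdiv}(Y)$ to have trivial étale quotient. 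The remaining step is to rule out an infinitesimal or unipotent kernel: here I would invoke boundedness of $F$-divided bundles on projective varieties (semistability with vanishing Chern classes, as in the proofs of the absolute Gieseker problem) to show that any $F$-divided bundle trivialized by $f$ is already trivial, so the kernel is trivial and $f_*$ is an isomorphism.

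The main obstacle I expect is establishing the $F$-divided version of Bhatt--Scholze descent and verifying that the glueing datum is compatible with the entire Frobenius-pullback system simultaneously, rather than at a single level. Descent of an ordinary vector subbundle across $X\times_Y X$ is delicate when $X\times_Y X$ is non-reduced or non-normal (which it typically is), and one must check that the descended subsheaves at consecutive levels $i$ and $i+1$ are matched by the transition isomorphisms $\sigma_i$; the geometrically-connected-fiber hypothesis is exactly what I would use to guarantee this compatibility, but making the argument precise under minimal hypotheses on $X\times_Y X$ is the technical heart of the proof.
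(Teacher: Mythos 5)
The full-faithfulness step of your first assertion (reduce to $f_*\cO_X=\cO_Y$ via Stein factorization and the invariance of $\Vect^\perf$ under universal homeomorphisms, then use the projection formula) is fine and matches the paper. The gap is in the subobject step: you invoke ``the $F$-divided analogue of \cite[Theorem 1.3]{Bhatt-Scholze2017}'', but this analogue is not available to be cited — it is the main new technical result of the paper (Theorem \ref{main2}, proved as Theorem \ref{descent-for-proper-fibrations-old}) — and the route you sketch for establishing it is precisely the one the paper explains cannot work. Applying \cite[Theorem 1.3]{Bhatt-Scholze2017} level by level descends each $E_i$ only after a finite universal homeomorphism of the base, hence after pullback by some Frobenius power $F^{e_i}$; since $(F_X^{e_i})^*E_i\simeq E_{i-e_i}$ and the exponent $e_i$ depends on $i$ with no uniformity, this gives no information about descending the system $\{E_i,\sigma_i\}$ as a whole — one merely re-descends bundles one already had. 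The paper's proof of Theorem \ref{descent-for-proper-fibrations-old} therefore avoids descent data on $X\times_YX$ entirely: it characterizes the essential image of $f^*$ as the $F$-divided bundles trivial on all geometric fibers, treats $Y=\Spec k$ by a dimension count on $M_i=H^0(X,E_i)$ compatible with Frobenius pushforward, and in general uses the theorem on formal functions to show that the adjoint maps $\tau_i\colon F_Y^*f_*E_{i+1}\to f_*E_i$ are isomorphisms, so that $\{f_*E_i,\tau_i\}$ is the descent; triviality of a subobject $\FF\subseteq f^*\EE$ on geometric fibers is then a one-line Tannakian fact (a subobject of a trivial object is trivial), not a glueing statement. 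Moreover, your preliminary claim that the two pullbacks of $\EE'$ to $X\times_YX$ agree is itself unproven: fiberwise agreement does not imply agreement of subsheaves on the typically non-reduced $X\times_YX$, as you yourself concede.

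The second assertion is also not proved. Granting faithful flatness, what must be shown is that $f^*\colon\Vect^\perf(Y)\to\Vect^\perf(X)$ is essentially surjective, and ``boundedness of $F$-divided bundles'' does not yield this: already in the absolute case \cite{Esnault-Mehta2010} the decisive input is Hrushovski's theorem \cite[Corollary 1.2]{Hrushovski2004}, and the relative case (the paper's Theorem \ref{isomorphism-for-morphisms}) needs in addition Sun's representation schemes \cite[Theorem 2.3]{Sun2019}, a spreading-out of $f$ over a finitely generated base, the Verschiebung-type rational self-map whose periodic points (dense by Hrushovski) produce Frobenius-periodic bundles, Katz's correspondence \cite[Proposition 4.1.1]{Katz1972} turning these into representations of $\pi^{\sf{\acute et}}_1$, a specialization argument transporting the hypothesis on \'etale fundamental groups to closed fibers of the model, and finally semicontinuity together with Lemma \ref{evaulation-map} and Corollary \ref{descent-for-proper-fibrations} to conclude that every $\EE\in\Vect^\perf(X)$ is trivial on the fibers of $f$ and hence descends. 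Your sketch replaces all of this by the unsupported (and mis-stated) claim that ``any $F$-divided bundle trivialized by $f$ is already trivial''; the statement actually needed is that every $F$-divided bundle on $X$ restricts trivially to every fiber of $f$, and proving that is exactly the hard part. Note also that the paper's derivation of the second assertion uses \cite[Expos\'e IX, Corollaire 5.6]{SGA1} to upgrade injectivity of $f_*$ on \'etale fundamental groups to bijectivity, and Stein factorization plus Lemma \ref{passing-to-universal-homeomorphism} to reduce to $f_*\cO_X=\cO_Y$ — steps your argument would also need but does not supply.
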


The first part of the above theorem follows from more general Corollary \ref{proper-fibration-relative-gerbe}. It is an analogue of  \cite[Expos\'e IX, Corollaire 5.6]{SGA1} for the $F$-divided fundamental group. This result was known only in case $f$ is a smooth morphism between smooth projective varieties (see \cite[Theorem 1.1]{dos-Santos2015}).
The second part follows from  Theorem \ref{isomorphism-for-morphisms} and Lemma \ref{passing-to-universal-homeomorphism}.

\medskip

One of the main ingredients in our proof of Theorem \ref{main3} (see Theorem \ref{descent-for-proper-fibrations-old})
and Theorem \ref{main4} is the following analogue of the descent theorem \cite[Theorem 1.3]{Bhatt-Scholze2017} of B. Bhatt and P. Scholze in the case of $F$-divided bundles, which might be of independent interest. 

\begin{Theorem} \label{main2}
	Let $f: X\to Y$ be a proper surjective map of connected Noetherian $F$-finite $\FF_p$-schemes. Assume that all geometric fibers of $f$ are connected.  
	Then  an $F$-divided bundle $\EE$ on $X$ descends (necessarily uniquely) to an $F$-divided bundle on $Y$  if and only if
	$\EE$ is trivial on all geometric fibers of $f$. 
\end{Theorem}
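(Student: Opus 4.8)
The plan is to deduce the theorem from the descent theorem \cite[Theorem 1.3]{Bhatt-Scholze2017} applied to ordinary vector bundles, using the $F$-divided structure only to bridge the gap created by the Stein factorization of $f$. Throughout I would factor $f$ through its Stein factorization $X\xrightarrow{g} Y'\xrightarrow{h} Y$, where $Y'=\Spec_Y f_*\cO_X$, so that $g$ is proper with $g_*\cO_X=\cO_{Y'}$ and $h$ is finite; since the geometric fibers of $f$ are connected, $h$ is a finite universal homeomorphism.

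First I would dispose of uniqueness and the ``only if'' direction, both of which rest on $f^*$ being fully faithful on $F$-divided bundles. The relation $g_*\cO_X=\cO_{Y'}$ gives, by the projection formula, full faithfulness of $g^*$ on $F$-divided bundles, while $h$, being a universal homeomorphism, induces an isomorphism on perfections and hence an equivalence $\Vect^{\perf}(Y)\xrightarrow{\simeq}\Vect^{\perf}(Y')$; composing yields full faithfulness of $f^*$ and thus uniqueness of the descent. For the ``only if'' direction, if $\EE\cong f^*\EE'$ then the restriction of $\EE$ to a geometric fiber $X_{\bar y}$ is pulled back from the geometric point $\bar y$, and every $F$-divided bundle on the spectrum of an algebraically closed (hence perfect) field is isomorphic to $\unit^{\oplus n}$; therefore $\EE|_{X_{\bar y}}$ is trivial.

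The substance is the ``if'' direction. The key elementary observation is that if $\EE=\{E_i,\sigma_i\}$ satisfies $\EE|_{X_{\bar y}}\cong\unit^{\oplus n}$ as an $F$-divided bundle, then matching the $i$-th components forces $E_i|_{X_{\bar y}}\cong\cO_{X_{\bar y}}^{\oplus n}$; in other words \emph{every} $E_i$ is trivial on all geometric fibers of $f$, and hence of $g$. I can therefore apply \cite[Theorem 1.3]{Bhatt-Scholze2017} to each vector bundle $E_i$ along $g$ (whose geometric fibers are connected), obtaining bundles $\tilde E_i$ on $Y'$ with isomorphisms $g^*\tilde E_i\xrightarrow{\simeq} E_i$. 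I would then descend the Frobenius structure: by naturality of the absolute Frobenius ($g\circ F_X=F_{Y'}\circ g$, so $g^*F_{Y'}^*=F_X^*g^*$), each $\sigma_i\colon F_X^*E_{i+1}\to E_i$ becomes a morphism between the $g$-pullbacks of $F_{Y'}^*\tilde E_{i+1}$ and $\tilde E_i$, which by full faithfulness of $g^*$ descends to a unique isomorphism $\tilde\sigma_i\colon F_{Y'}^*\tilde E_{i+1}\xrightarrow{\simeq}\tilde E_i$. This assembles an $F$-divided bundle $\tilde\EE=\{\tilde E_i,\tilde\sigma_i\}$ on $Y'$ with $g^*\tilde\EE\cong\EE$. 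Finally, using the equivalence $h^*\colon\Vect^{\perf}(Y)\xrightarrow{\simeq}\Vect^{\perf}(Y')$ induced by the universal homeomorphism $h$ (Lemma~\ref{passing-to-universal-homeomorphism}), I write $\tilde\EE\cong h^*\EE'$ and conclude $f^*\EE'=g^*h^*\EE'\cong g^*\tilde\EE\cong\EE$.

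I expect the main obstacle to lie precisely at the interface with the Stein factorization. Descending the individual transition maps $\sigma_i$ requires full faithfulness of pullback, which holds over $Y'$ but fails over $Y$ because $f_*\cO_X=\cO_{Y'}$ may differ from $\cO_Y$; it is exactly here that the $F$-divided structure is indispensable, since the universal homeomorphism $h$—though invisible to $F$-divided bundles—genuinely obstructs descent for ordinary vector bundles. A secondary point to check carefully is that \cite[Theorem 1.3]{Bhatt-Scholze2017} is available for vector bundles of arbitrary rank under the present hypotheses, so that it may be invoked for each $E_i$ simultaneously.
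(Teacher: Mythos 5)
Your preliminary reductions are all sound and agree with the paper's: the Stein factorization $X\xrightarrow{g}Y'\xrightarrow{h}Y$, the equivalence $\Vect^{\perf}(Y)\simeq\Vect^{\perf}(Y')$ across the finite universal homeomorphism $h$ (Lemma~\ref{passing-to-universal-homeomorphism}), full faithfulness of $g^*$ via the projection formula, the ``only if'' direction, and the observation that triviality of $\EE|_{X_{\bar y}}$ as an $F$-divided bundle forces triviality of every single $E_i|_{X_{\bar y}}$. The genuine gap is the core step of your ``if'' direction: \cite[Theorem 1.3]{Bhatt-Scholze2017} does \emph{not} produce bundles $\tilde E_i$ on $Y'$ with $g^*\tilde E_i\simeq E_i$. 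That theorem is a statement about perfect schemes (equivalently, about perfections): for Noetherian non-perfect schemes it only yields descent of $E_i$ after a finite purely inseparable cover of $Y'$, i.e.\ it descends some Frobenius twist $(F_X^{e_i})^*E_i$, with an exponent $e_i$ you cannot control. This restriction is not removable: for honest vector bundles, descent along a proper map with connected geometric fibers fails over non-perfect bases (infinitesimal deformations of the trivial bundle are invisible on geometric fibers but are not pulled back from the base), and it is precisely Frobenius that kills these obstructions --- hence the perfection in Bhatt--Scholze's conclusion. Worse, the $F$-divided structure makes this output degenerate: since the $\sigma_i$ give $(F_X^{e+i})^*E_i\simeq(F_X^{e})^*E_0$, applying the theorem to every $E_i$ tells you nothing beyond what it already tells you for $E_0$ alone, namely that the single bundle $(F_X^{e})^*E_0$ descends; you cannot conclude that any $E_i$ itself descends to $Y'$, let alone all of them. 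The paper's introduction states exactly this failure mode and notes that, for this reason, its proof does not rely on Bhatt--Scholze at all.

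What the paper does instead, and what a repaired argument needs, is a direct descent argument that handles all indices $i$ simultaneously. After reducing to $f_*\cO_X=\cO_Y$ and to reduced schemes, one first treats $Y=\Spec k$ for a field $k$: there $M_i=H^0(X,E_i)$ is $r$-dimensional and the evaluation maps $f^*M_i\to E_i$ are isomorphisms, and the maps $M_{i+1}\to F_{k*}M_i$ induced by the $\sigma_i$ are injective $k$-linear maps between spaces of the same finite dimension, hence isomorphisms --- this dimension count is the device that makes one construction work for every $i$ at once. The general case then follows by applying this over the formal fibers: the theorem on formal functions and faithful flatness of $\cO_{Y,y}\to\hat{\cO}_{Y,y}$ show that each $f_*E_i$ is locally free and that the adjoint maps $F_Y^*f_*E_{i+1}\to f_*E_i$ are isomorphisms, so $f_*\EE$ is the desired descent. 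Your proposal, as written, cannot be completed by citing \cite[Theorem 1.3]{Bhatt-Scholze2017}; the issue you flagged at the end (availability for arbitrary rank) is not the real obstacle --- the Frobenius twist is.
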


In fact, \cite[Theorem 1.3]{Bhatt-Scholze2017} already implies that we can descend each $E_i$ separately after some finite purely inseparable map.  Since such maps are dominated by some iterates of the absolute Frobenius,
there exists some integer $e\ge 0$ such that $(F_X^{e})^*E_0$ descends to $Y$. Then, for all $i\ge 0$,  the bundles $(F_X^{e+i})^*E_i$ descend to $Y$, so \cite[Theorem 1.3]{Bhatt-Scholze2017} does not give us any additional information. In particular, it does not allow us to descend $\EE$ to $Y$. The main challenge in our proof is to find a single map that works for all $E_i$ at the same time and notably, our argument does not rely on the result of Bhatt and Scholze.

\medskip

The proof of the first part of Theorem \ref{main3} uses just Theorem \ref{main2} and various results on genuinely ramified morphism. But as in all the previous papers on the subject, the proof of the second part  of Theorem \ref{main3}  depends on important Hrushovski's theorem \cite[Corollary 1.2]{Hrushovski2004} and boundedness of families of semistable sheaves with fixed numerical invariants (see \cite[Theorem 4.4]{Langer2004}). In this paper, this last result is hidden in X. Sun's generalization of Simpson's representation spaces to positive characteristic (see \cite[Theorem 2.3]{Sun2019}), which is also an important ingredient of the proof. As in \cite{Esnault-Srinivas2019} one
could also try to prove Theorem \ref{main3}, (2) using moduli spaces of semistable vector bundles constructed in \cite{Langer2004}. Then it is easy to descend simple objects of $\Vect^\perf(X)$ but as in \cite[Section 5]{Esnault-Srinivas2019} this approach runs into technical difficulties involving study of universal extensions on products of moduli spaces making it impractical due to a horrible notation and complications.

\medskip

The structure of the paper is as follows. In the first section, we recall some facts on $F$-divided bundles and representation schemes. In Section 2 we prove Theorem \ref{main2} and the first part of Theorem \ref{main4} as its simple corollary. Section 3 contains the proofs of Theorem \ref{main1}, Theorem \ref{main3} and the remaining part of Theorem \ref{main4}.

\section{Preliminaries}

\subsection{$F$-divided bundles}

Let $X$ be an $\FF_p$-scheme. An \emph{$F$-divided vector bundle} on	$X$ is a sequence $\{E_i, \sigma_i\} _{i\in \ZZ_{\ge 0}}$ of vector bundles  $E_i$ on $X$ and $\cO_X$-isomorphisms $\sigma _i:F_{X}^*E_{i+1}\to E_i$.
The category of $F$-divided bundles on $X$ is denoted by $\Vect^\perf(X)$. It comes with the \emph{unit} $\unit_X$ defined by the constant sequence $\{\cO_X\}_{i\in \ZZ_{\ge 0}}$ with canonical isomorphisms $F_{X}^*\cO_X\simeq \cO_X$.

Below we recall the following result from \cite{Langer-Zhang2025} that generalizes J.-P.-S. dos Santos's result from \cite{dos_Santos2007} (see also \cite[Proposition 3.3]{Esnault-Srinivas2016}).

\begin{Theorem}\label{F-divided-sheaves-form-Tannakian-category}
Let $X$ be an $F$-finite Noetherian connected scheme. Then  $\Vect^\perf(X)$ is a Tannakian category over
the inverse limit perfection $\cO_X(X)^{\sf perf}$ of the ring $\cO_X(X)$. The corresponding Tannakian gerbe $\Pi_X^{\Fdiv}$ is pro-smooth
banded.
\end{Theorem}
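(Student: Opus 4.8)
The plan is to verify the three Tannakian axioms for $\Vect^\perf(X)$ and then extract the structural properties of the associated gerbe. First I would establish that $\Vect^\perf(X)$ is a rigid abelian tensor category $k$-linear over the base ring $R := \cO_X(X)^{\sf perf}$. The tensor structure is defined termwise: for $\EE = \{E_i,\sigma_i\}$ and $\FF = \{F_i,\tau_i\}$, set $(\EE\otimes\FF)_i = E_i\otimes_{\cO_X} F_i$, using that $F_X^*$ commutes with tensor products to produce the transition isomorphisms from $\sigma_i\otimes\tau_i$. Duals are formed termwise as well, $(\EE^\vee)_i = E_i^\vee$, and since each $E_i$ is a genuine vector bundle, rigidity is inherited objectwise. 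Abelianness requires care because kernels and cokernels of $F$-divided morphisms must again be $F$-divided: here one uses that $F_X$ is flat on an $F$-finite scheme (equivalently $F_X^*$ is exact on locally free sheaves), so that kernels and cokernels computed termwise carry the induced transition isomorphisms. The internal Hom and the identification $\End(\unit_X) = R$ pin down the base ring as the perfection of global functions, which is exactly the statement that the endomorphisms of the unit object are $R$.

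Next I would exhibit a fiber functor to confirm the Tannakian structure in the neutral or gerbe-theoretic sense. The natural candidate is the functor $\omega$ sending $\EE = \{E_i,\sigma_i\}$ to the $R$-module obtained as the inverse limit perfection of global sections, or more robustly, for a chosen $k$-point $x$ of $X$ one takes the fiber $\omega_x(\EE) = \varprojlim_i (E_i)_x$ along the Frobenius transition maps, landing in finite free modules over $R$. The key point is exactness and faithfulness of this fiber functor, which again reduces to flatness of Frobenius and the local freeness of each $E_i$. Since the base ring $R$ need not be a field (it is the perfection of $\cO_X(X)$, which for $X$ connected but not geometrically connected or not proper may be larger than $k$), the correct output of Tannakian duality is not a group scheme but a gerbe $\Pi_X^{\Fdiv}$ banded by an affine group scheme over $\Spec R$; I would invoke the general Tannakian formalism over a base ring in the form used in \cite{Langer-Zhang2025} to package this.

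Finally I would address the two adjectives, pro-smooth and banded. To see that the gerbe is pro-smooth one writes $\Vect^\perf(X)$ as a filtered union (equivalently, the gerbe as a cofiltered limit) of Tannakian subcategories generated by finitely many objects, and argues that each finitely generated Tannakian subcategory corresponds to a smooth affine group scheme. The smoothness comes from the defining Frobenius-descent structure: the transition isomorphisms $\sigma_i$ force the monodromy to be, informally, a limit under Frobenius pullback, and the perfectness built into the base ring $R$ makes the relevant Hopf algebras reduced and hence the group schemes smooth in positive characteristic. The word \emph{banded} records that $\Pi_X^{\Fdiv}$ carries the structure of an $R$-gerbe with a well-defined band, which follows formally once the category is shown to be Tannakian over $R$ with the automorphisms of the unit giving exactly $R^\times$.

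I expect the main obstacle to be the pro-smoothness, and specifically the passage from the perfection of the base ring to actual smoothness (that is, geometric reducedness) of the pro-algebraic group schemes attached to each finitely generated subcategory. Verifying the tensor and abelian axioms is essentially formal given exactness of $F_X^*$, and the identification of the base ring as $\cO_X(X)^{\sf perf}$ is a clean computation of $\End(\unit_X)$; but controlling the infinitesimal structure of the Tannakian gerbe, and in particular ruling out nonreduced group-scheme structure by exploiting the Frobenius-divisibility inherent in $F$-divided objects, is the subtle step where the positive-characteristic perfection genuinely enters. I would therefore allocate the bulk of the argument to showing that each generating subcategory yields a smooth group scheme over $\Spec R$, deducing pro-smoothness of $\Pi_X^{\Fdiv}$ as a cofiltered limit.
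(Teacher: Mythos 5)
The first thing to note is that the paper contains no proof of this statement: it is recalled verbatim from \cite{Langer-Zhang2025} (generalizing dos Santos and Esnault--Srinivas), so there is no internal argument to compare yours against; your proposal has to stand on its own, and it has a genuine gap at the very first step, the abelianness of $\Vect^\perf(X)$. You justify termwise kernels and cokernels by asserting that ``$F_X$ is flat on an $F$-finite scheme (equivalently $F_X^*$ is exact on locally free sheaves).'' This is false: by Kunz's theorem, flatness of Frobenius on a Noetherian scheme is equivalent to regularity, whereas $F$-finiteness allows arbitrary singularities (every variety over a perfect field is $F$-finite), and the theorem is stated for general $F$-finite Noetherian connected schemes. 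Moreover, even where Frobenius \emph{is} flat your argument would not close: the termwise kernel and cokernel of a map of vector bundles are in general only coherent (on $\AA^2$ the cokernel of $\cO^{\oplus 2}\to\cO$, $(f,g)\mapsto xf+yg$, is a skyscraper), so exactness of $F_X^*$ cannot by itself keep you inside $\Vect^\perf$. The key input that your termwise plan silently needs is the nontrivial lemma of Gieseker/dos Santos type, extended to this generality in \cite{Langer-Zhang2025}: a coherent sheaf $G_0$ admitting infinitely many Frobenius divisions $F_X^*G_{i+1}\simeq G_i$ is automatically locally free. Granting that, cokernels are handled by right-exactness of $F_X^*$ and kernels then follow by local splitting; without it, abelianness is unproved.

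Two further misconceptions are worth flagging. First, you assert that the base ring $R=\cO_X(X)^{\sf perf}$ ``need not be a field'' and that this is why one obtains a gerbe rather than a group scheme. In the formalism being used, a Tannakian category must live over a field, and part of the content of the cited result is precisely that for a connected Noetherian scheme the inverse-limit perfection of $\cO_X(X)$ \emph{is} a perfect field: for a nonzero compatible system $(a_i)$ with $a_{i+1}^p=a_i$, Noetherianity makes the chain of ideals $(a_i)$ stabilize, which produces an idempotent, and connectedness forces that idempotent to be $1$, i.e.\ the $a_i$ are units. The gerbe (rather than a group scheme) appears because the category need not be \emph{neutral} over $R$ --- fiber functors exist only after a field extension, e.g.\ at the perfected residue field of a point --- not because $R$ fails to be a field; relatedly, ``pro-smooth banded'' is one property (the band is pro-smooth), not two separate adjectives. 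Second, for pro-smoothness the operative mechanism is not ``perfectness of $R$ making the Hopf algebras reduced'' but the Frobenius-divisibility of the objects: every object is an iterated Frobenius pullback, so each finite-type monodromy quotient is compatibly isomorphic to its own Frobenius twists, and this is what rules out infinitesimal group-scheme parts (reduced plus finite type over a perfect field then gives smooth). You do name this ingredient in your final paragraph, but it remains a statement of intent rather than an argument.
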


We will also need the following lemma from \cite[Lemma 5.9]{Langer-Zhang2025}.
	
\begin{Lemma}\label{evaulation-map}
Let  $X$ be a connected Noetherian  $F$-finite $\FF_p$-scheme and let $k=\cO_X(X)^{\sf perf}$. Then 
for any $\EE\in \Vect^{\perf}(X)$,	$\EE ^h(X):=\Hom (\unit_X, \EE)$ is a finite dimensional $k$-vector space and the canonical evaluation map $\EE^h(X)\otimes _k \unit_X\to \EE$
is injective.
\end{Lemma}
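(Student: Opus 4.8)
The plan is to derive both conclusions from the single assertion that, for every finite $k$-linearly independent family $f_1,\dots,f_n\in\EE^h(X)$, the induced morphism $\unit_X^{\oplus n}\to\EE$ is a monomorphism in $\Vect^\perf(X)$; finiteness and the injectivity of the evaluation map then both fall out. To begin, note that since $\Vect^\perf(X)$ is Tannakian over $k=\cO_X(X)^{\sf perf}$ (Theorem~\ref{F-divided-sheaves-form-Tannakian-category}) its unit object satisfies $\End(\unit_X)=k$, so $\EE^h(X)=\Hom(\unit_X,\EE)$ is naturally a $k$-vector space, and expressions such as $\unit_X^{\oplus n}$ and the evaluation map are legitimate once we know the space is finite dimensional.

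First I would perform the reduction. Fix $k$-linearly independent $f_1,\dots,f_n$ and let $g=(f_1,\dots,f_n)\colon\unit_X^{\oplus n}\to\EE$, with $\cK:=\ker g$. Applying the left-exact functor $\Hom(\unit_X,-)$ and using $\Hom(\unit_X,\unit_X^{\oplus n})=k^n$, the map $g_*\colon k^n\to\Hom(\unit_X,\EE)$ sends the $j$-th basis vector to $f_j$; since the $f_j$ are $k$-independent, $g_*$ is injective, whence $\Hom(\unit_X,\cK)=\ker g_*=0$. Thus everything reduces to the following \emph{Key Claim}: a subobject of $\unit_X^{\oplus n}$ admitting no nonzero morphism from $\unit_X$ is zero. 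Granting it, $\cK=0$ and $g$ is a monomorphism; choosing a fiber functor $\omega$ of the gerbe $\Pi_X^{\Fdiv}$ over some field extension $K$ of $k$, exactness and faithfulness turn $g$ into an injection $K^n\hookrightarrow\omega\EE$, so $n\le\dim_K\omega\EE=\rk\EE$. As this bound is uniform in $n$, the space $\EE^h(X)$ is finite dimensional of dimension $\le\rk\EE$; applying the monomorphism conclusion to a basis of $\EE^h(X)$ then gives exactly the injectivity of the evaluation map.

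It remains to establish the Key Claim, which carries the real content. Here I would again pass to the fiber functor $\omega$: the object $\unit_X^{\oplus n}$ corresponds to a trivial representation of the band of $\Pi_X^{\Fdiv}$, and the crucial structural input is that a subobject of a trivial object is again trivial, i.e.\ of the form $V\otimes_k\unit_X$ for a $k$-subspace $V\subseteq k^n$. For such an object $\Hom(\unit_X,V\otimes_k\unit_X)=V$, so the hypothesis $\Hom(\unit_X,\cK)=0$ forces $V=0$, that is $\cK=0$.

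The main obstacle is precisely this last step in the possibly non-neutral (gerbe) case: one must know that a sub-$K$-space of $\omega(\unit_X^{\oplus n})$ which underlies a subobject defined over $k$ descends to a $k$-subspace --- equivalently, that the fiber functor preserves $k$-linear independence of horizontal sections. In the neutral case this is immediate, since the band acts trivially and all descent data are trivial; in general it follows from fpqc descent along the pro-smooth banded gerbe of Theorem~\ref{F-divided-sheaves-form-Tannakian-category}. Should one prefer to avoid the gerbe, the Key Claim can instead be proved intrinsically: a categorical subobject of $\{\cO_X^{\oplus n}\}$ is a Frobenius-compatible system of saturated subbundles $K_i\subseteq\cO_X^{\oplus n}$ with $F_X^*K_{i+1}=K_i$, and a Frobenius-descent (Wronskian-type) argument at the generic points of $X$ shows that the coefficients of any minimal $\cO_X$-linear relation among horizontal sections are compatibly infinitely $p$-divisible, hence lie in $k=\cO_X(X)^{\sf perf}$; this contradicts $k$-independence unless the relation is trivial. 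It is exactly the passage to the inverse-limit perfection $k$ that forces these coefficients into the field of constants.
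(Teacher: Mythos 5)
A preliminary remark that frames everything: this paper never proves Lemma \ref{evaulation-map} --- it is imported verbatim from \cite[Lemma 5.9]{Langer-Zhang2025}, immediately after the Tannakian theorem (Theorem \ref{F-divided-sheaves-form-Tannakian-category}), which is imported from the same source. So there is no in-paper proof to compare yours against, and I can only judge your argument on its own terms and against the paper's logical setup. On those terms, your architecture is sound and is a genuinely formal, purely Tannakian route: you deduce the lemma from Theorem \ref{F-divided-sheaves-form-Tannakian-category} alone. The reduction is correct: left-exactness of $\Hom(\unit_X,-)$ together with $\End(\unit_X)=k$ identifies $\Hom(\unit_X,\ker g)$ with $\ker g_*$, which vanishes by $k$-linear independence, so everything rests on your Key Claim that a subobject of $\unit_X^{\oplus n}$ receiving no nonzero map from $\unit_X$ is zero; granting it, exactness and faithfulness of a fiber functor give $\dim_k\EE^h(X)\le\rk\EE$, and applying the monomorphism statement to a basis gives the evaluation map's injectivity, exactly as you say. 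I would also highlight, in your favor, that you route the finiteness through the Key Claim rather than through faithfulness alone: faithfulness only embeds $\EE^h(X)$ $k$-linearly into the $K$-vector space $\omega\EE$, and a finite-dimensional $K$-space can have infinite $k$-dimension, so the Key Claim (preservation of $k$-linear independence) is indeed indispensable.

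The thin spot is the proof of the Key Claim itself, which is where the real content of the lemma lives and which you assert rather than establish. The statement you need --- that the pullback $\Vect(k)\to\Rep(\Pi_X^{\Fdiv})$ is fully faithful with essential image closed under subobjects, equivalently Deligne's base-change formula $\Hom_{T_K}(\unit,\EE_K)\simeq\Hom_T(\unit,\EE)\otimes_kK$ for a field extension $K/k$ neutralizing the gerbe --- is true and standard, but "it follows from fpqc descent along the pro-smooth banded gerbe" is an appeal, not an argument; a complete write-up should either cite this base-change/descent statement precisely or prove it (the point being that descent data for objects with trivial band action reduce, via $H^0(\Pi_{X,R}^{\Fdiv},\cO)=R$, to classical fpqc descent for vector spaces). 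Your alternative intrinsic sketch (Frobenius-compatible kernels, a minimal-relation argument at generic points forcing coefficients into the inverse-limit perfection $k$) is in all likelihood much closer to the actual proof in \cite{Langer-Zhang2025}, and carrying that out carefully would yield a self-contained proof independent of the Tannakian machinery. Finally, one caveat on circularity: your derivation is legitimate \emph{within this paper}, where Theorem \ref{F-divided-sheaves-form-Tannakian-category} is taken as known, but as a proof from scratch it is only valid if the Tannakian property of $\Vect^{\perf}(X)$ in \cite{Langer-Zhang2025} is not itself established using their Lemma 5.9; that cannot be checked from the present text.
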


In addition, we require the following simple result (see \cite[Proposition 3.3]{Esnault-Srinivas2016} and \cite[Lemma 2.3]{Langer-Zhang2025}).

\begin{Lemma}\label{passing-to-universal-homeomorphism}
	Let $f\colon Y\to X$ be a  finite universal homeomorphism of Noetherian $\FF_p$-schemes. Then the pullback gives rise to an
	equivalence of categories ${\sf Vect} ^{\sf perf}(X)\simeq {\sf Vect} ^{\sf perf}(Y)$. 
\end{Lemma}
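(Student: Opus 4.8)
The plan is to exhibit an explicit quasi-inverse to $f^*$ built out of Frobenius. Two ingredients drive the argument. First, for \emph{any} morphism $g\colon X'\to X$ of $\FF_p$-schemes the absolute Frobenius commutes with $g$ (that is, $g\circ F_{X'}=F_X\circ g$ as maps of schemes), so $g^*$ is defined on $F$-divided bundles and pullback is a contravariant pseudo-functor: $(f\circ g)^*\cong g^*\circ f^*$. Second, the Frobenius pullback $F_X^*\colon\Vect^\perf(X)\to\Vect^\perf(X)$ is an auto-equivalence, with quasi-inverse the shift $S\colon \{E_i,\sigma_i\}\mapsto\{E_{i+1},\sigma_{i+1}\}$; the natural isomorphisms $F_X^*\circ S\cong\id$ and $S\circ F_X^*\cong\id$ are given levelwise by the structure maps $\sigma_i$. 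Consequently every power $(F_X^*)^n=(F_X^n)^*$ is an auto-equivalence of $\Vect^\perf(X)$, and likewise on $Y$. These are precisely the routine facts underlying \cite[Proposition 3.3]{Esnault-Srinivas2016} and \cite[Lemma 2.3]{Langer-Zhang2025}.

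Next I would record the key geometric input: a finite universal homeomorphism of Noetherian $\FF_p$-schemes is dominated by Frobenius. On an affine chart $\Spec B\to\Spec A$ the radicial, integral and surjective hypotheses force $b^{p^{m}}\in\im(A\to B)$ for every $b\in B$ and some $m$ (the nilradical is nilpotent, and the reduced residue-field extensions are purely inseparable); since $B$ is finite over $A$, a single $m$ works, so $B^{p^{m}}\subseteq\im(A\to B)$. The induced ring map $b\mapsto b^{p^{m}}$ is a morphism $\Spec A\to\Spec B$ satisfying $f\circ g=F_X^{m}$ and $g\circ f=F_Y^{m}$. Taking the maximum of the finitely many exponents over an affine cover of $X$ (and using that $B^{p^{m}}\subseteq\im A$ implies $B^{p^{m'}}\subseteq\im A$ for $m'\ge m$) gives a uniform $n$; the local maps, being the common $p^{n}$-power construction, agree on overlaps and glue to a morphism $g\colon X\to Y$ with $f\circ g=F_X^{n}$ and $g\circ f=F_Y^{n}$.

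With these in hand the conclusion is formal. Pseudo-functoriality and the factorizations give $g^*\circ f^*\cong(F_X^{n})^*$ and $f^*\circ g^*\cong(F_Y^{n})^*$, both auto-equivalences; in particular both are faithful, so $f^*$ and $g^*$ are each faithful. Essential surjectivity of $f^*$ holds because for $\EE\in\Vect^\perf(Y)$ one has $f^*\bigl(g^*S^{n}\EE\bigr)\cong(F_Y^{n})^*S^{n}\EE\cong\EE$, since $(F_Y^{n})^*$ and $S^{n}$ are mutually quasi-inverse. For fullness, a morphism $\psi\colon f^*A\to f^*B$ is sent by $g^*$ to a morphism between $(F_X^{n})^*A$ and $(F_X^{n})^*B$; as $(F_X^{n})^*$ is fully faithful this equals $(F_X^{n})^*\alpha$ for a unique $\alpha\colon A\to B$, and naturality of the isomorphism $g^*f^*\cong(F_X^{n})^*$ yields $g^*(f^*\alpha)=g^*\psi$, whence $f^*\alpha=\psi$ by faithfulness of $g^*$. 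Thus $f^*$ is fully faithful and essentially surjective, hence an equivalence.

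The formal equivalence argument is soft; the only point carrying genuine content is the Frobenius-domination step. I expect that to be the main obstacle: one must verify that a finite universal homeomorphism in characteristic $p$ really does satisfy $B^{p^{n}}\subseteq\im A$ with a \emph{uniform} $n$, keeping track of nilpotents, and that the local factorizations patch into a global $g$. Everything else is bookkeeping with the shift functor and pullback pseudo-functoriality.
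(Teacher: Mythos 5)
Your strategy---dominate $f$ by a power of the absolute Frobenius and exploit that Frobenius pullback is an auto-equivalence of $\Vect^{\sf perf}$ with quasi-inverse the shift functor---is the standard argument for this lemma; it is essentially how the sources the paper cites for it (Esnault--Srinivas, Prop.~3.3, and Langer--Zhang, Lemma~2.3) proceed, the paper itself offering no proof beyond those citations. Your categorical bookkeeping is complete and correct: the shift/Frobenius quasi-inverse, pseudo-functoriality of pullback, and the deduction that $f^*$ is an equivalence from the two factorizations $g^*f^*\cong(F_X^n)^*$ and $f^*g^*\cong(F_Y^n)^*$ are all fine.

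The problems sit in the Frobenius-domination step, which you rightly identify as the only substantive point, but which as written has two gaps. (i) The recipe ``$b\mapsto b^{p^m}$'' defines a ring map $B\to A$ only if $\varphi\colon A\to B$ is injective; in general $\ker\varphi$ is nonzero (it is merely contained in the nilradical of $A$, being killed by every prime since $\Spec B\to\Spec A$ is surjective), so the preimage of $b^{p^m}$ is not unique and your $g$ is not well defined. The repair is cheap: send $b$ to $a^{p^l}$ where $\varphi(a)=b^{p^m}$ and $p^l$ exceeds the nilpotency exponent of $\ker\varphi$; Frobenius then kills the ambiguity and one gets $f\circ g=F_X^{m+l}$, $g\circ f=F_Y^{m+l}$. (ii) More seriously, your parenthetical justification of $B^{p^m}\subseteq\im(A\to B)$ is not a proof. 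Pure inseparability of residue-field extensions yields only the fiberwise statement that for each prime $\fp$ of $A$ one has $b^{p^n}\in\im(A_{\fp})+\fp B_{\fp}$, which is strictly weaker than $b^{p^n}\in\im(A)$; note also that finite universal homeomorphisms can be nontrivial even when both rings are reduced and \emph{all} residue-field extensions are trivial (e.g.\ $\FF_p[t^2,t^3]\subset\FF_p[t]$), so no argument using only residue fields and nilpotents can suffice. A genuine proof runs, for instance, as follows: reduce to $A\subseteq B$ reduced; using the finitely many minimal primes choose $n$ so that $A\subseteq A[b^{p^n}]$ is an isomorphism at every minimal prime; then the conductor $\mathfrak{c}=\mathrm{Ann}_A\bigl(A[b^{p^n}]/A\bigr)$ is an ideal of both rings not contained in any minimal prime, and Noetherian induction applied to $A/\mathfrak{c}\to A[b^{p^n}]/\mathfrak{c}$ finishes, because $\mathfrak{c}\subseteq A$ turns the conclusion modulo $\mathfrak{c}$ into the conclusion in $A$. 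Alternatively one can simply cite the Stacks project result that for any universal homeomorphism of $\FF_p$-algebras every element of the target has a $p$-power in the image. With (i) and (ii) filled in, your proof is correct and coincides with the cited one.
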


A direct corollary of the above lemma is the following:

\begin{Lemma} \label{lem:finiteness}
	If $X\subset Y$ is a closed immersion of Noetherian $\FF_p$-schemes with a nilpotent kernel then the restriction gives rise to an equivalence of categories 
	${\sf Vect} ^{\sf perf}(Y)\simeq {\sf Vect} ^{\sf perf}(X)$.
\end{Lemma}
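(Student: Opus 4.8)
The plan is to deduce Lemma \ref{lem:finiteness} directly from Lemma \ref{passing-to-universal-homeomorphism}, as the phrasing ``a direct corollary'' suggests. The key observation is that a closed immersion $\iota\colon X\hookrightarrow Y$ whose defining ideal sheaf $\mathcal{I}$ is nilpotent is itself a finite universal homeomorphism of Noetherian $\FF_p$-schemes. Once this is established, Lemma \ref{passing-to-universal-homeomorphism} applies verbatim to $\iota$ and yields the desired equivalence ${\sf Vect}^{\sf perf}(Y)\simeq {\sf Vect}^{\sf perf}(X)$, with the equivalence given by restriction (pullback along $\iota$).

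So the first step is to verify that $\iota$ satisfies all hypotheses of Lemma \ref{passing-to-universal-homeomorphism}. It is automatically a morphism of Noetherian $\FF_p$-schemes. It is finite: a closed immersion is always a finite morphism, being affine with the structure sheaf $\iota_*\mathcal{O}_X = \mathcal{O}_Y/\mathcal{I}$ finitely generated (indeed cyclic) as an $\mathcal{O}_Y$-module. The content is that it is a universal homeomorphism. First, $\iota$ is a homeomorphism onto its image, and since $\mathcal{I}$ is nilpotent the underlying topological space of $X$ equals that of $Y$ (a nilpotent ideal is contained in the nilradical, so $V(\mathcal{I}) = \Spec$ of the whole ring topologically); hence $\iota$ is a homeomorphism of topological spaces. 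Second, this property is stable under arbitrary base change $Y'\to Y$: the base-changed morphism $X\times_Y Y' \to Y'$ is again the closed immersion cut out by the pullback ideal $\mathcal{I}\mathcal{O}_{Y'}$, which remains nilpotent (if $\mathcal{I}^n=0$ then $(\mathcal{I}\mathcal{O}_{Y'})^n=0$), so the same argument shows it is a homeomorphism. Thus $\iota$ is a universal homeomorphism.

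With these verifications in hand, the proof is immediate: applying Lemma \ref{passing-to-universal-homeomorphism} to the finite universal homeomorphism $\iota\colon X\to Y$ gives the equivalence of categories ${\sf Vect}^{\sf perf}(Y)\simeq {\sf Vect}^{\sf perf}(X)$ induced by pullback, which is precisely the restriction functor.

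I do not expect any genuine obstacle here, since the statement is labelled ``a direct corollary'' and the entire mathematical content reduces to the elementary check that a nilpotent closed immersion is a finite universal homeomorphism. The only point requiring a moment of care is the stability under base change, where one must confirm that nilpotence of the ideal is preserved; this follows from the purely algebraic fact recorded above that $(\mathcal{I}\mathcal{O}_{Y'})^n = 0$ whenever $\mathcal{I}^n = 0$. Everything else is formal.
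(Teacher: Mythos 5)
Your proposal is correct and follows exactly the route the paper intends: the paper offers no separate proof, declaring the lemma ``a direct corollary'' of Lemma \ref{passing-to-universal-homeomorphism}, and your verification that a nilpotent closed immersion is a finite universal homeomorphism (finiteness, topological identification via the nilradical, and stability of nilpotence under base change) is precisely the content being left implicit. Nothing is missing.
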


\subsection{Representation schemes}

In the proof of Theorem \ref{isomorphism-for-morphisms} we use existence of a certain moduli scheme of vector bundles that we recall in this subsection. Let us  fix a positive integer $r$, a projective morphism $f: X_S\to S$ of schemes of finite type over some fixed field and an $f$-very ample line bundle $\cO_{X}(1)$. Let us assume that the fibers of $f$ are geometrically connected and there exists a section $x_S: S\to X_S$ of the morphism $f$. The following theorem was proven by X. Sun in \cite[Theorem 2.3]{Sun2019}. We formulate only a special case that is more suitable for our applications.

\begin{Theorem}
	There exists an $S$-scheme $R(r, X_S, x_S)$ that representes the functor associating to any morphism $T\to S$
	the set of framed sheaves $(\cE, \beta)$, where:
	\begin{enumerate}
		\item $\cE$ is a $T$-flat vector bundle on $X_T:=X_S\times _S T$, such that for every point $t\in T$ the restriction $\cE_t$  of $\cE$ to $(X_T)_t$ is geometrically Gieseker semistable  and has numerically vanishing Chern classes; moreover the quotients of the Jordan--H\"older filtration of $\cE_t$ are locally free at $x_T(t)$.
		\item $\beta: x_T^*\cE\to\cO_T^{\oplus r}$ is an isomorphism.  
	\end{enumerate}
\end{Theorem}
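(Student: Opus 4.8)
The plan is to realize $R(r,X_S,x_S)$ as a geometric quotient of a framed Quot scheme, following the standard Quot-scheme construction of moduli of sheaves, with the framing along the section $x_S$ supplying the rigidification that is needed for the functor to be representable by an honest scheme rather than a stack.

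First I would invoke boundedness. The family of geometrically Gieseker semistable bundles of rank $r$ with numerically vanishing Chern classes on the fibers of $f$ is bounded; this is the boundedness of semistable sheaves with fixed numerical invariants, cf. \cite[Theorem 4.4]{Langer2004}. Hence there is a single integer $m\gg 0$ such that for every such bundle $\cE$ on a fiber one has $H^i(\cE(m))=0$ for $i>0$ and $\cE(m)$ globally generated, with $\dim H^0(\cE(m))=N:=P(m)$ for the common Hilbert polynomial $P$. A choice of basis of $H^0(\cE(m))$ exhibits $\cE$ as a quotient $\cO_{X}(-m)^{\oplus N}\twoheadrightarrow \cE$, so the moduli problem embeds into the relative Quot scheme $Q:=\mathrm{Quot}_{X_S/S}(\cO_{X_S}(-m)^{\oplus N},P)$, which is projective over $S$.

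Next I would carve out the relevant locus and add the frame. Let $Q^{\circ}\subset Q$ be the subscheme of quotients $q\colon \cO(-m)^{\oplus N}\to\cE$ for which, fibrewise, $\cE$ is a semistable vector bundle with numerically trivial Chern classes, its Jordan--H\"older quotients are locally free along $x_S$, and $H^0(q(m))$ is an isomorphism. Openness of this locus follows from cohomology and base change together with the openness of semistability in families (again \cite{Langer2004}). Over $Q^{\circ}$ the restriction $x_S^*\cE_{\mathrm{univ}}$ of the universal bundle is locally free of rank $r$, so I can form the frame bundle $\widetilde Q:=\mathrm{Isom}_{Q^{\circ}}(x_S^*\cE_{\mathrm{univ}},\cO^{\oplus r})$, a $\GL_r$-torsor over $Q^{\circ}$. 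A $T$-point of $\widetilde Q$ is exactly a pair $(q,\beta)$ with $\beta$ a framing, and forgetting $q$ produces the framed sheaf $(\cE,\beta)$. The group $\GL_N$ acts on $\widetilde Q$ by reparametrizing $q$, and two points share the same image framed sheaf precisely when they lie in one $\GL_N$-orbit; thus the framed functor is the sheaf quotient $\widetilde Q/\GL_N$, and I set $R(r,X_S,x_S):=\widetilde Q/\GL_N$.

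The crux, and the step I expect to be the main obstacle, is to show that this $\GL_N$-action is free, equivalently that a framed semistable bundle $(\cE,\beta)$ admits no nontrivial automorphism. This amounts to injectivity of the restriction map $\End(\cE)\to \End(\cE|_{x_S})$. For stable $\cE$ it is immediate since $\End(\cE)=k$, but for strictly semistable $\cE$ it is delicate: a nonzero endomorphism vanishing at $x_S$ would have to interact with the Jordan--H\"older filtration, and one must use both the numerical triviality of the Chern classes, which forces the image of any endomorphism to again be semistable of slope $0$, and the hypothesis that the Jordan--H\"older quotients are locally free along $x_S$, so that the fibre at $x_S$ detects the multiplicity spaces of the isotypic pieces faithfully. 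Note that even the central scalars in $\GL_N$ act freely once the framing is present, since rescaling $\cE$ alters $\beta$. Granting this injectivity, the $\GL_N$-action on $\widetilde Q$ is free, and by the Le Potier--Simpson GIT linearization on $Q$ (whose semistable points are exactly the Gieseker semistable sheaves) the geometric quotient $\widetilde Q/\GL_N$ exists as a quasi-projective $S$-scheme and represents the framed functor, the descent of $(\cE_{\mathrm{univ}},\beta_{\mathrm{univ}})$ furnishing the universal family. The remaining points --- flatness over the base, compatibility with the section $x_T$, and base change of $H^0$ --- are routine consequences of properness of $f$ and cohomology-and-base-change.
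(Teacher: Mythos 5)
First, a point of comparison: the paper does not prove this statement at all. It is quoted as X.~Sun's theorem \cite[Theorem 2.3]{Sun2019}, and the only argument the paper supplies is the Remark that follows, explaining that the stated version is the open subscheme of Sun's representation scheme cut out by the conditions of being locally free and having numerically vanishing Chern classes. So what you have written is a reconstruction of Sun's proof, and the route you sketch --- bounded family, framed Quot scheme, free $\GL_N$-action, GIT quotient --- is indeed the Simpson-style construction that Sun transplants to positive characteristic. The skeleton is right; the problem is that the two places where the theorem actually has content are exactly the places you leave open.

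The first gap is the one you name yourself and then ``grant'': injectivity of $\End(\cE)\to\End(x_T^*\cE)$ for strictly semistable $\cE$. This is the heart of the matter and cannot be omitted. It is provable along the lines you gesture at: semistable sheaves whose reduced Hilbert polynomial equals that of $\cO_X$ form an abelian category; the image of any endomorphism of $\cE$ is again an object of it, and the Jordan--H\"older factors of any subquotient of $\cE$ occur among those of $\cE$, hence are locally free at $x$; therefore the fibre functor at $x$ is exact and faithful on this subcategory, which kills any endomorphism vanishing at $x$ (and the same argument after flat base change gives scheme-theoretic, not just set-theoretic, freeness of the stabilizers). The second gap is that your last step quotes characteristic-zero technology in a characteristic-$p$ paper. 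Freeness of the action plus existence of a geometric quotient does not by itself make $\widetilde Q\to \widetilde Q/\GL_N$ a principal bundle, which is what representability of the framed functor requires; Simpson deduces local triviality from Luna's \'etale slice theorem, which is unavailable in characteristic $p$ --- and avoiding this is precisely why the paper cites Sun rather than Simpson. One must instead argue that the free action yields an fppf torsor (e.g.\ via properness of the action on the stable locus and the fact that $\GL_N$-torsors are Zariski-locally trivial), or follow Sun's argument. Relatedly, the linearization must be constructed on the framed space $\widetilde Q$ itself: GIT-semistability on $Q$ only produces the moduli space of S-equivalence classes, which is not a geometric quotient along the strictly semistable locus, so ``the Le Potier--Simpson linearization on $Q$'' cannot be the one doing the work. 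Finally, openness of the condition that the Jordan--H\"older factors of $\cE_t$ be locally free at $x_T(t)$ is asserted but is not a standard openness statement --- Jordan--H\"older filtrations do not vary nicely in families --- and needs its own argument.
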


In the following $R(r, X_S, x_S)$ is called  the \emph{representation scheme} and  $\beta$ is called a \emph{frame} of $\cE$ at $x_T$.

\begin{Remark}
The above version is slightly different to the one constructed in \cite{Sun2019} and then used in \cite{Biswas-Parameswaran-Kumar2025} and \cite{Sun-Zhang2025}. It corresponds to an open subscheme of the corresponding representation scheme for the Hilbert polynomial of the trivial bundle, classifying vector bundles (this is an open condition, e.g., by \cite[Lemma 2.1.8]{Huybrechts-Lehn2010}) with numerically vanishing Chern classes (which is also an open condition). This scheme is much better behaved when considering a morphism $f: X\to Y$ and the rational map on representation schemes defined by the pullback. In general, the author does not know a good reason for the behaviour of Hilbert polynomials as claimed in all the above cited papers, e.g.,  \cite[proof of Theorem 4.2]{Sun2019}, \cite[Proposition 2.1]{Biswas-Parameswaran-Kumar2025} and various places in \cite{Sun-Zhang2025}. In fact, this seems to cause problems even when dealing with the Verschiebung map defined by the Frobenius morphism and which appears all over these papers.
\end{Remark}

\subsection{Specialization homomorphism}

It is well-known that for a  flat proper morphism  with geometrically connected fibres the specialization homomorphism of \'etale fundamental groups need not be surjective in general (see \cite[Expos\'e X,  Remarques 2.5]{SGA1}).
However, the following lemma shows that this can happen only over a proper closed subset of the base.

\begin{Lemma}\label{surjectivity-on-fund-group-for-spreading-out}
Let $S$ be a locally Noetherian integral scheme and let $f: X\to S$ be a flat proper morphism  with geometrically connected fibres. Let $\eta$ be the generic point of $S$ and let $\bar \eta$ be a geometric point lying over $\eta$.
Then there exists a non-empty open subset $U\subset S$ such that for every point $s\in U$ and a geometric point $\bar s$
lying over $s$, the specialization homomorphism
${\sf{sp}}: \pi^{\sf{\acute et}}_1(X_{\bar \eta})\to\pi^{\sf{\acute et}}_1(X_{\bar s})$
is surjective.
\end{Lemma}

\begin{proof}
There exists a finite radicial extension $\kappa (\eta)\subset L\subset \kappa (\bar \eta)$ such that $(X_L)_{\sf{red} }$
is geometrically reduced (see \cite[Proposition 4.6.6]{EGA4-2}). Let $S'\to S$ be a normalization of $S$ in the extension
 $\kappa (\eta)\subset L$. Let $X'$ be the reduced scheme structure on $X\times _SS'$ and let $f': X'\to S'$ be the induced map.
 By \cite[Lemma 054Z]{StacksProject} the generic fiber of $f'$ is reduced and hence it is isomorphic to $(X_L)_{\sf{red} }$.
 So by \cite[Th\'eor\`eme 12.2.4]{EGA4-3} there exists a non-empty open subset $U'\subset S'$ such that the fibers of $f'|_{(f')^{-1}(U')}$ are geometrically reduced. By generic flatness (see \cite[Proposition 052B]{StacksProject}) we can also assume that $f'|_{(f')^{-1}(U')}$ is flat.  Then by \cite[Expos\'e X, Corollaire 2.4]{SGA1}
 for every geometric point $\bar s'$ lying over $s'\in U'$    the specialization 
 homomorphism
 ${\sf{sp}}: \pi^{\sf{\acute et}}_1(X'_{\bar \eta})\to\pi^{\sf{\acute et}}_1(X'_{{\bar s'}})$
 is surjective. But if $\bar s$  denotes the image of $\bar s'$ then
 by \cite[Lemma 0C0K]{StacksProject} we have a commutative diagram
$$
\xymatrix{\pi^{\sf{\acute et}}_1(X'_{\bar \eta })\ar[d]^{\sf sp}\ar[r]& \pi^{\sf{\acute et}}_1(X_{\bar \eta})\ar[d]^{\sf sp}\\
	\pi^{\sf{\acute et}}_1(X'_{{\bar s'}})\ar[r]&\pi^{\sf{\acute et}}_1(X_{\bar s})\\
}
$$
Since the horizontal maps in the above diagram are isomorphisms, it is sufficient to take as $U$ and non-empty open subset contained in the image of $U'\to S$.
\end{proof}

\section{$F$-divided bundles on fibrations}

\begin{Lemma}\label{factorization-of-proper-morphism}
	Let $f: X\to Y$ be a proper surjective map of connected Noetherian $F$-finite $\FF_p$-schemes. Assume that all geometric fibers of $f$ are connected. Then $f$ decomposes into a composition of $g:X\to Z$ with $g_*\cO_X=\cO_Z$ and a finite universal homeomorphism $h: Z\to Y$.
\end{Lemma}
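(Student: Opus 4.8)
The plan is to realize the decomposition as the Stein factorization of $f$ and then to show that, under the hypothesis that all geometric fibers of $f$ are connected, its finite part is automatically a universal homeomorphism. First I would set $Z:=\Spec_Y(f_*\cO_X)$; this is legitimate since $f$ is proper and $Y$ is Noetherian, so $f_*\cO_X$ is a coherent sheaf of $\cO_Y$-algebras. The structure map $h\colon Z\to Y$ is then finite, and the canonical map $g\colon X\to Z$ satisfies $f=h\circ g$ and $g_*\cO_X=\cO_Z$ by construction; this is the classical Stein factorization of a proper morphism of Noetherian schemes. Since $f=h\circ g$ is surjective, $h$ is surjective. Since $h$ is finite it is separated, so by the cancellation property for properness $g$ is proper; and as $\cO_Z\xrightarrow{\sim}g_*\cO_X$ the morphism $g$ is schematically dominant, so its image is dense, and being closed by properness it is all of $Z$. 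Hence $g$ is surjective (and, incidentally, $Z$ is connected, being a continuous image of the connected space $X$).

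The heart of the argument is to prove that $h$ is universally injective, for which it suffices to check that every geometric fiber of $h$ consists of a single point. Fix a geometric point $\bar y\colon\Spec\Omega\to Y$ and base change along it. Surjectivity is stable under base change, so $g_{\bar y}\colon X_{\bar y}\to Z_{\bar y}$ is surjective. Now $Z_{\bar y}$ is finite over the algebraically closed field $\Omega$, hence a finite discrete topological space, so each of its points $z$ is open and closed; consequently $X_{\bar y}=\coprod_{z\in Z_{\bar y}}g_{\bar y}^{-1}(z)$ exhibits $X_{\bar y}$ as a disjoint union of open and closed subsets, each nonempty by surjectivity of $g_{\bar y}$. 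Since $X_{\bar y}$ is connected by hypothesis, $Z_{\bar y}$ must consist of exactly one point.

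Finally I would conclude: having shown that every geometric fiber of $h$ is a singleton, $h$ is universally injective (equivalently radicial). A finite morphism is integral, so $h$ is integral, surjective and universally injective, and such a morphism is a universal homeomorphism. This yields the asserted factorization $f=h\circ g$ with $g_*\cO_X=\cO_Z$ and $h$ a finite universal homeomorphism.

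The step I expect to be the main obstacle is the middle one, namely the base-change behaviour in forming the geometric fibers. The naive approach would try to argue via $(g_{\bar y})_*\cO_{X_{\bar y}}=\cO_{Z_{\bar y}}$, i.e. that the Stein factorization itself commutes with the base change $\bar y\to Y$; but this requires cohomology-and-base-change input (some flatness hypothesis) that is not available in the present generality. The key idea is to sidestep this entirely and make the argument purely topological: one uses only that $g$ is surjective and that a finite scheme over an algebraically closed field is discrete, which together force the connectedness of $X_{\bar y}$ to descend to the assertion $\#Z_{\bar y}=1$. Note that the $F$-finiteness and $\FF_p$ hypotheses are not used here; they are inherited from the surrounding context.
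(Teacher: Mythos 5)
Your proposal is correct and follows essentially the same route as the paper: Stein factorization, then using surjectivity of $g$ together with connectedness of the geometric fibers of $f$ to show each geometric fiber of $h$ is a single point, whence $h$ is finite, surjective and universally injective, hence a universal homeomorphism. The only cosmetic difference is that you phrase the fiber argument via the clopen decomposition $X_{\bar y}=\coprod_{z\in Z_{\bar y}}g_{\bar y}^{-1}(z)$, while the paper notes that the connected image $h^{-1}(\bar y)$ of $f^{-1}(\bar y)$, being finite over a field, is the spectrum of an Artinian local algebra; these are the same observation.
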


    \begin{proof}
    By Stein factorization
    \cite[\href{https://stacks.math.columbia.edu/tag/03H0}{Tag
    03H0}]{StacksProject}, $f$ factors as $X \xrightarrow{g} Z \xrightarrow{h}
    Y$ with $h$ finite and $\mathcal{O}_Z \xrightarrow{\cong}
    g_*\mathcal{O}_X$. For any geometric point $\bar{y}$ of $Y$, since $g$ is
    surjective and $f^{-1}(\bar{y})$ is connected, the fiber $h^{-1}(\bar{y})$
    is connected. As it is also finite over $\kappa(\bar{y})$, the scheme
    $h^{-1}(\bar{y})$ must be the spectrum of an Artinian local $\kappa(\bar{y})$-algebra. This implies $h$ is universally injective
    (the geometric fiber of any base change of $h$ will remain so). Since $h$
    is universally closed and surjective, it is therefore
    a universal homeomorphism.
\end{proof}

The following theorem is an analogue of \cite[Theorem 1.3]{Bhatt-Scholze2017}. 

\begin{Theorem}\label{descent-for-proper-fibrations-old} Let $f: X\to Y$ be a proper surjective map of connected Noetherian $F$-finite $\FF_p$-schemes. Assume that all geometric fibers of $f$ are connected.
Then  the pullback functor $f^*\colon \Vect^\mathrm{perf}(Y) \to \Vect^\mathrm{perf}(X)$ is fully faithful. Its essential image consists precisely of those $F$-divided bundles $\EE$  on $X$ whose restriction to every geometric fiber $X_{\bar{y}}$ (for $\bar{y}$ a geometric point of $Y$) is trivial. 
\end{Theorem}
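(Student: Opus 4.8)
The plan is to prove this theorem by reducing it, via the Stein factorization of Lemma \ref{factorization-of-proper-morphism}, to the case where $g_*\cO_X = \cO_Z$ and then treating the finite universal homeomorphism separately. Write $f = h \circ g$ with $g: X \to Z$ satisfying $g_*\cO_X = \cO_Z$ and $h: Z \to Y$ a finite universal homeomorphism. By Lemma \ref{passing-to-universal-homeomorphism}, pullback along $h$ is already an equivalence $\Vect^\perf(Y) \simeq \Vect^\perf(Z)$, and one checks that the geometric fibers of $g$ agree with those of $f$ (as $h$ is a universal homeomorphism, it identifies geometric points and their fibers). Hence it suffices to prove both the full faithfulness and the characterization of the essential image for the morphism $g$, i.e.\ I may assume from the outset that $f_*\cO_X = \cO_Y$ and that all geometric fibers are connected.

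For full faithfulness, the key computation is the projection-formula identity $\Hom_{\Vect^\perf(X)}(f^*\FF, f^*\GG) = \Hom_{\Vect^\perf(Y)}(\FF,\GG)$. Using internal Homs in the Tannakian category, this reduces to showing that the evaluation space $(f^*\EE)^h(X) = \Hom(\unit_X, f^*\EE)$ equals $\EE^h(Y) = \Hom(\unit_Y,\EE)$ for every $\EE \in \Vect^\perf(Y)$, where $\EE = \cHom(\FF,\GG)$. First I would unwind: a morphism $\unit_X \to f^*\EE$ is a compatible system of maps $\cO_X \to f^*E_i$, i.e.\ global sections $s_i \in H^0(X, f^*E_i)$ compatible under the $\sigma_i$. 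Because $f_*\cO_X = \cO_Y$ and $E_i$ is locally free, the projection formula gives $H^0(X, f^*E_i) = H^0(Y, E_i \otimes f_*\cO_X) = H^0(Y, E_i)$, so each section descends uniquely; the Frobenius-compatibility conditions match up because Frobenius commutes with $f^*$ and with the projection formula. This yields $(f^*\EE)^h(X) = \EE^h(Y)$, hence full faithfulness.

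For the essential image, the containment in one direction is immediate: if $\EE = f^*\FF$ then its restriction to a geometric fiber $X_{\bar y}$ is the pullback of $\FF_{\bar y}$ along the structure map $X_{\bar y} \to \Spec \kappa(\bar y)$, hence trivial. The substantive direction is to descend a given $\EE = \{E_i,\sigma_i\}$ on $X$ that is fiberwise trivial. The strategy I would use is to produce the descent termwise but uniformly: set $F_i := f_*E_i$. By the theorem on cohomology and base change, fiberwise triviality of $E_i$ (more precisely of $\EE$, which forces each $E_i$ trivial on fibers since Frobenius on $\kappa(\bar y)^{\mathrm{perf}}$ is an automorphism) together with $f_*\cO_X=\cO_Y$ and connectedness of fibers implies $F_i = f_*E_i$ is a vector bundle of the same rank and that the natural map $f^*F_i \to E_i$ is an isomorphism. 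The Frobenius isomorphisms $\sigma_i$ then descend: since $f_*$ commutes with the relative Frobenius in the $F$-finite setting and $f^* F_X^* = F_X^* f^*$, applying $f_*$ to $\sigma_i: F_X^*E_{i+1}\xrightarrow{\simeq} E_i$ and using $f_*F_X^*E_{i+1} = F_Y^* f_*E_{i+1}$ (Frobenius base-change for the affine Frobenius morphism) gives isomorphisms $\tau_i: F_Y^*F_{i+1}\xrightarrow{\simeq} F_i$. This assembles $\FF = \{F_i,\tau_i\}$ with $f^*\FF \cong \EE$.

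The main obstacle is establishing, uniformly in $i$, that $f_*E_i$ is locally free of the right rank and that $f^*f_*E_i \to E_i$ is an isomorphism — equivalently that $E_i$ is the pullback of a bundle on $Y$. The subtlety, flagged in the introduction, is precisely that one cannot invoke \cite[Theorem 1.3]{Bhatt-Scholze2017} termwise: that would only descend $E_i$ after some $i$-dependent power of Frobenius. The clean way to get uniform descent is cohomology and base change: fiberwise triviality makes $h^0(X_{\bar y}, E_{i,\bar y}) = \rk E_i$ constant and $h^1$-vanishing (or more carefully, semicontinuity plus the base-change criterion) locally constant in $\bar y$, which forces $R^0f_* E_i$ to be locally free and compatible with base change on a whole neighborhood; connectedness of $Y$ propagates this globally. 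The delicate point I would check carefully is that ``$\EE$ trivial on every geometric fiber'' as an $F$-divided bundle genuinely forces each underlying $E_i$ to be fiberwise trivial (not merely fiberwise an iterated Frobenius pullback of something), which is where the perfection $\kappa(\bar y)^{\mathrm{perf}}$ and the fact that $F$-divided bundles on $\Spec$ of a perfect field are constant enter; once that is secured, the higher-cohomology vanishing needed for base change follows since a trivial bundle on a proper connected $\kappa(\bar y)$-scheme has $H^0$ of dimension equal to its rank, and this is exactly the regularity needed to run the descent simultaneously for all $i$.
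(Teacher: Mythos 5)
Your overall skeleton --- reduction to $f_*\cO_X=\cO_Y$ via Lemma \ref{factorization-of-proper-morphism} and Lemma \ref{passing-to-universal-homeomorphism}, full faithfulness from the projection formula, and the easy direction of the essential-image criterion --- coincides with the paper's proof. The gap is in the descent direction, and it is not a technicality. First, the tool you invoke, ``the theorem on cohomology and base change,'' requires the sheaves $E_i$ to be flat over $Y$; the theorem assumes no flatness whatsoever, and in the paper's applications (e.g.\ Theorem \ref{isomorphism-for-morphisms}) $f$ is a birational, hence typically non-flat, morphism. Semicontinuity genuinely fails without flatness: on the blow-up $X$ of a smooth surface at a point, with exceptional curve $E$, the line bundle $\cO_X(E)$ has $h^0=1$ on the general (point) fibers but $h^0(\PP^1,\cO_{\PP^1}(-1))=0$ on the exceptional fiber. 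Second, the identity you use to descend the $\sigma_i$, namely $f_*F_X^*E_{i+1}=F_Y^*f_*E_{i+1}$, is not a base-change formula: the square formed by $f$ and the two absolute Frobenii commutes but is \emph{not} Cartesian (its Cartesian replacement involves the relative Frobenius), and $F_Y$ is flat only when $Y$ is regular. The map $\tau_i\colon F_Y^*f_*E_{i+1}\to f_*E_i$ has to be produced by adjunction from $E_{i+1}\to F_{X*}E_i$, and proving that it is an isomorphism is the actual content of the theorem, not a formal consequence.

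The deeper problem is that your descent argument uses, for each fixed $i$, only the fiberwise triviality of the \emph{single} bundle $E_i$. Since full faithfulness is already in hand, if each $E_i$ descended individually then the $\sigma_i$ would descend automatically and the whole theorem would follow termwise; so your strategy, if valid, would prove that any vector bundle trivial on all geometric fibers descends, with the $F$-divided structure playing no role. But this is exactly the statement that fails in general and that \cite[Theorem 1.3]{Bhatt-Scholze2017} circumvents only by passing to purely inseparable covers (Frobenius twists), as the paper's introduction emphasizes: ``the main challenge is to find a single map that works for all $E_i$ at the same time.'' The obstruction sits in the infinitesimal neighbourhoods $X_n=X\times_Y\Spec\cO_{Y,y}/m_y^n$ of a fiber: by the theorem on formal functions, local freeness of $f_*E_i$ at $y$ and base change are governed by $\varprojlim_n H^0(X_n,E_{i,n})$, and triviality of a single bundle on $X_1=X_y$ gives no control over $X_n$ for $n\ge 2$. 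This is precisely where the paper uses the $F$-divided structure: by Lemma \ref{lem:finiteness}, $\Vect^{\perf}(X_n)\simeq\Vect^{\perf}(X_y)$, so after first proving descent of $\EE|_{X_y}$ to $\kappa(y)$ (the case of a field, handled by a dimension count on $M_i=H^0$ using the Frobenius isomorphisms), the whole $F$-divided bundle descends over the formal completion ${\sf Spf}\hat{\cO}_{Y,y}$; the theorem on formal functions together with faithful flatness of $\cO_{Y,y}\to\hat{\cO}_{Y,y}$ then yields that each $f_*E_i$ is locally free at $y$, that the $\tau_i$ are isomorphisms, and that $f^*f_*\EE\to\EE$ is an isomorphism. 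Incidentally, the point you flag as delicate --- that triviality of $\EE$ on a geometric fiber forces each $E_i$ to be trivial there --- is immediate from the definition of an isomorphism of $F$-divided bundles; the genuinely delicate point is the infinitesimal one above, and repairing your proof means replacing cohomology-and-base-change by this formal-completion argument.
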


\begin{proof} 
	   By  Lemma \ref{passing-to-universal-homeomorphism} and Lemma \ref{descent-for-proper-fibrations-old} we can assume that $f_*\mathcal{O}_X = \mathcal{O}_Y$. Then the projection formula implies that the pullback functor $f^*: \Vect^{\perf}(Y) \to \Vect^{\perf}(X)$ is fully faithful. So we need to show that $\EE\in \Vect ^{\perf} (X)$ descends (necessarily uniquely) to $Y$ if and only if for all geometric points $\bar{y}$ of $ Y$, $\EE$ is trivial on the fiber $X_{{\bar{y}}}$.

For any $\FF \in \Vect^{\perf} (Y)$  the restriction of $\FF$ to any geometric point $\bar{y}$ is trivial
	as $ \Vect^{\perf} (\bar{y})=\Vect (\bar{y})$. Since the absolute Frobenius commutes with any morphisms, the restriction of $f^*\FF$ to $X_{\bar y}$ is also trivial, which proves one implication.	
	
To prove the other one, by Lemma \ref{lem:finiteness} 
	we can assume that both $X$ and $Y$ are reduced. Let us write $\EE= \{E_i, \sigma_i \}$ and assume that for all geometric points $\bar{y}$ of $Y$, $\EE$ is trivial of rank $r$ on the fiber $X_{\bar{y}}$. 
	
	Let us first assume that $Y=\Spec k$ for some field $k$. Let us set $M_i=H^0(X,E_i)$.
	By assumption we have $H^0(X, \cO_X)=k$ and then
	the base change $M_i\otimes _k\bar{k}\simeq H^0(X_{\bar{k} },\cO_{X_{\bar{k}}}^{\oplus r})$ implies that 
	$M_i$ is an $r$-dimensional vector space and the relative evaluation map  $f^*M_i\to E_i$ is an isomorphism. 
	On the other hand, we have $k$-linear injective maps
	$$M_{i+1}\hookrightarrow H^0(X,F_{X*}F_X^*E_{i+1})=F_{k*}H^0(X,F_X^*E_{i+1})\mathop{\longrightarrow}^{H^0(\sigma_i)}F_{k*}M_{i}$$
of $k$-vector spaces of the same dimension,	so all the maps must be isomorphisms. This shows that $M_{i+1}\simeq  F_{k*}M_{i}$ and for every $i$ we have a commutative diagram
	$$
	\xymatrix{
		F_X^*f^*M_{i+1}\ar[rd]^{\simeq}\ar@{=}[r]& f^*F_k^*M_{i+1}\ar[r]^{\simeq}&f^*M_i\ar[d]^{\simeq}\\
		&F_X^*E_{i+1}\ar[r]^{\simeq}&E_i 
	}$$
Therefore $\EE$ descends to $\Spec k$.
	
	Now let us consider the general situation. The isomorphism $\sigma _i$ induces $E_{i+1}\to F_{X*}E_i$ and
	hence $$f_*E_{i+1}\to f_*F_{X*}E_i=F_{Y*}f_*E_i.$$
	We will show that the adjoint maps $\tau_i: F_Y^*f_*E_{i+1}\to f_*E_i$ are isomorphisms so that we obtain an $F$-divided bundle $f_*\EE=\{ f_*E_i, \tau_i\}$ on $Y$.
	
	By the above for every point $y\in Y$ the  restriction of $\EE$ to $X_y$ descends to $y$.
	Let $\hat{X}_y$ be the formal completion of $X$ along $X_y$ and let $\imath: \hat{X}_y\to X$ and  $\hat{f}: \hat{X}_y\to {\sf Spf}\hat{\cO}_{Y,y}$ be the induced maps. Then $ \Vect^{\perf} ({\sf Spf}\hat{\cO}_{Y,y})\simeq  \Vect^{\perf} (y)$
	and  $ \Vect^{\perf} (\hat{X}_y)\simeq  \Vect^{\perf} (X_y)$ so  $\imath ^*\EE$ descends to ${\sf Spf}\hat{\cO}_{Y,y}$. This implies that we can define
	$$\hat{f}_*(\imath ^*\EE)=\{ \hat{f}_*(\imath ^*E_i), \hat{f}_*(\imath ^*\sigma_i)\}$$ 
	and the canonical relative evaluation map  $\hat{f}^*\hat{f}_*(\imath ^*\EE)\to \imath ^*\EE$ is an isomorphism.
	In more concrete terms, if $X_n=X\times_Y\Spec \cO_{Y,y}/m_y^n$ is the $n$-th infinitesimal neighbourhood of $X_y$ 
	and $E_{i,n}$ is the restriction of $E_i$ to $X_n$ then $\imath ^*E_i=\varprojlim E_{i,n}$,
$ \hat{f}_*(\imath ^*E_i)=\varprojlim H^0(X_n, E_{i,n})$ and the maps $\hat{f}_*(\imath ^*\sigma_i)$ are defined analogously. By the theorem on formal functions we have natural isomorphisms $(\hat{f_*E_i})_y\simeq \hat{f}_*(\imath ^*E_i)$ and $\hat{f}_*(\imath ^*\EE)$  gives an object of $\Vect^{\perf} (\hat{\cO}_{Y,y})$. 	 Since ${\cO}_{Y,y}\to \hat{\cO}_{Y,y}$ is faithfully flat this implies that all $f_*E_i$ are locally free at $y$ and the canonical maps 
	$$F_{\cO_{Y,y}}^*(f_*E_{i+1})_y\to (f_*E_{i})_y$$
	are isomorphisms. Since this holds for all $y\in Y$ the maps $\tau_i$ are isomorphisms
    and $f_*\EE$ is a well defined object of $\Vect^{\perf} (Y)$. We also have the induced relative evaluation map $f^*f_*\EE\to \EE$, which is an isomorphism on every formal fiber, so it is also an isomorphism.
\end{proof}

	The following example shows that unlike in the usual vector bundle case we cannot expect that if $\EE$ descends to $Y$ then it is trivial on all the fibers of $f$.

\begin{Example}
	Let $\eta$ be the generic point of $X=\AA^1_{\bar \FF_p}\backslash \{0\}$.
	By \cite[Proposition 3.4]{Langer-Zhang2025}  the restriction map $\Pic _F(X)\to \Pic _F(\eta)$ is injective, where $\Pic _F$ denotes the group of isomorphism classes of $F$-divided line bundles.
	On the other hand, by \cite[Proposition 3.4 and Corollary 3.7]{Kindler2015} the group $\Pic _F(X)$ is non-trivial.
	In fact, we have $H^0(X, \cO_X^{\times})/\bar \FF_p^{\times}\simeq \ZZ$ so $\Pic _F(X)\simeq \ZZ_p/\ZZ$.
	It follows that there exist non-trivial $F$-divided line bundles  in $\Vect^{\perf}(\eta)=\Vect^{\perf}(\FF_p(z))$.
	However, $\Vect^\perf(\overline{\FF_p(z)})=\Vect(\overline{\FF_p(z)})$ so the base change of such an $F$-divided line bundle to 
	$\overline{\FF_p(z)}$ is the unit object. 
\end{Example}

\begin{Corollary} \label{descent-for-proper-fibrations} 
Let $f: X\to Y$ be a proper morphism of normal integral schemes of finite type over a perfect field $k$ of positive characteristic and let $\EE\in \Vect ^{\perf} (X)$.  Then $\EE$ descends (necessarily uniquely) to $Y$ if and only if for all closed points $y\in Y$, $\EE$ is trivial on the reduced scheme structure $( X_y)_{\sf red}$ of the fiber $X_{y}$.
\end{Corollary}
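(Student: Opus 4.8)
The plan is to deduce the statement from Theorem \ref{descent-for-proper-fibrations-old}, the only new content being the passage from ``triviality on \emph{all} geometric fibers'' to ``triviality on the reduced fibers over \emph{closed} points''. As a first step, exactly as in the proof of Theorem \ref{descent-for-proper-fibrations-old}, I would use the Stein factorization of $f$ together with Lemma \ref{factorization-of-proper-morphism} and Lemma \ref{passing-to-universal-homeomorphism} to reduce to the case $f_*\cO_X=\cO_Y$. Since $X$ and $Y$ are integral they are already reduced, and the finite universal homeomorphism occurring in the factorization induces an equivalence of the categories $\Vect^\perf$ on source and target and identifies the reduced fibers, so neither the descent problem nor the fiberwise condition changes. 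In this situation Theorem \ref{descent-for-proper-fibrations-old} says that $\EE$ descends to $Y$ if and only if $\EE|_{X_{\bar y}}$ is trivial for \emph{every} point $y\in Y$, and it remains to see that checking this on reduced fibers over closed points is enough.

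One implication is immediate: if $\EE=f^*\FF$, then for a closed point $y$ the restriction of $\EE$ to $(X_y)_{\sf red}$ is the pullback of $\FF|_y\in\Vect^\perf(\kappa(y))$. As $k$ is perfect and $\kappa(y)/k$ is finite, the field $\kappa(y)$ is perfect, whence $\Vect^\perf(\kappa(y))=\Vect(\kappa(y))$ and $\FF|_y\cong\unit^{\oplus r}$, so its pullback is trivial. For the converse, assume $\EE$ is trivial on $(X_y)_{\sf red}$ of rank $r$ for every closed $y$. Fixing such a $y$ and again using that $\kappa(y)$ is perfect, $(X_y)_{\sf red}\times_{\kappa(y)}\overline{\kappa(y)}$ is reduced and equals $(X_{\bar y})_{\sf red}$; hence a trivial $F$-divided bundle remains trivial after this base change, and by Lemma \ref{lem:finiteness} the restriction $\EE|_{X_{\bar y}}$ is trivial. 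Thus $\EE|_{X_{\bar y}}$ is trivial at every \emph{closed} point $y$.

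Next I would invoke the local part of the proof of Theorem \ref{descent-for-proper-fibrations-old}: at any point $y$ at which $\EE|_{X_{\bar y}}$ is trivial, the formal-completion argument used there (via $\Vect^\perf(\hat X_y)\simeq\Vect^\perf(X_y)$, the theorem on formal functions $(\widehat{f_*E_i})_y\simeq \hat f_*(\imath^*E_i)$, and faithfully flat descent along $\cO_{Y,y}\to\hat\cO_{Y,y}$) shows that $f_*E_i$ is locally free of rank $r$ at $y$, that the adjoint map $\tau_i\colon F_Y^*f_*E_{i+1}\to f_*E_i$ is an isomorphism at $y$, and that the relative evaluation $f^*f_*E_i\to E_i$ is an isomorphism along $X_y$. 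By the previous paragraph this holds at every closed point $y$, and now the proof is completed by a density/openness argument: for each $i$ the locus of $y\in Y$ where $f_*E_i$ is locally free of rank $r$ and $\tau_i$ is an isomorphism is open, and it contains every closed point; since $Y$ is of finite type over a field, hence Jacobson, this locus is all of $Y$. Therefore $f_*\EE=\{f_*E_i,\tau_i\}$ is a genuine object of $\Vect^\perf(Y)$. Likewise, the locus in $X$ where $f^*f_*E_i\to E_i$ is an isomorphism is open and contains every closed point of $X$ (each lying in some $X_y$ with $y$ closed), so it is all of $X$; thus $f^*f_*\EE\cong\EE$ and $\EE$ descends, the descent being unique by the full faithfulness of $f^*$ in Theorem \ref{descent-for-proper-fibrations-old}.

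The main obstacle is precisely this last point. Theorem \ref{descent-for-proper-fibrations-old} demands triviality on \emph{all} geometric fibers, while the hypothesis only controls the closed ones; and one cannot simply read off the condition at the generic point of $Y$, whose residue field is far from perfect (so that triviality over $\kappa(\eta)$ and over $\overline{\kappa(\eta)}$ genuinely differ, as the Example preceding this corollary already illustrates). The device that bridges this gap is that local freeness of $f_*E_i$, the isomorphy of each $\tau_i$, and the isomorphy of the relative evaluation are all open conditions, so the descent datum produced fibrewise over the dense set of closed points automatically propagates to the whole of $Y$ and $X$. I expect the only delicate verification to be that the reduction to $f_*\cO_X=\cO_Y$ is available, i.e. that the finite part of the Stein factorization is a universal homeomorphism, which is where the connectedness of the fibers (as in Theorem \ref{descent-for-proper-fibrations-old}) enters.
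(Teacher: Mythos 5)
Your proposal is correct and takes essentially the same route as the paper's proof: upgrade triviality on $(X_y)_{\sf red}$ to triviality on $X_y$ via Lemma \ref{lem:finiteness}, run the formal-fiber argument from the proof of Theorem \ref{descent-for-proper-fibrations-old} at each closed point to get local freeness of $f_*E_i$ and isomorphy of the maps $\tau_i$ there, and then use openness of these conditions together with density of closed points (the Jacobson property of finite-type $k$-schemes) to propagate them to all of $Y$. The only cosmetic differences are that the paper works directly with the fiber $X_y$ over the (perfect) residue field $\kappa(y)$ rather than passing to the geometric fiber, and it leaves the Stein-factorization reduction implicit in the phrase ``as in the proof of Theorem \ref{descent-for-proper-fibrations-old}''.
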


\begin{proof}
One implication is clear as there are no no-trivial $F$-divided bundles on the spectrum of a perfect field. To prove the other one let us assume that  for all closed points $y\in Y$, $\EE$ is trivial on $( X_y)_{\sf red}$. Then by Lemma \ref{lem:finiteness} it is trivial also on the fiber $X_{y}$. As in the proof of Theorem \ref{descent-for-proper-fibrations-old} this implies that all $ f_*E_i$ are locally free at $y$. But in our case, the closed points are dense in any closed subset of $Y$. Since local freeness is an open condition, this implies that all $ f_*E_i$ are locally free on $Y$. Similarly, the adjoint maps $\tau_i: F_Y^*f_*E_{i+1}\to f_*E_i$ are isomorphisms at all closed points of $Y$ so they are isomorphisms. This implies that $\EE$ descends to the $F$-divided bundle $\{ f_*E_i, \tau _i\}$.
\end{proof}

\begin{Corollary}\label{proper-fibration-relative-gerbe}
	Let $f: X\to Y$ be a proper surjective map of connected Noetherian $F$-finite $\FF_p$-schemes. Assume that all geometric fibers of $f$ are connected. Then  the induced morphism $f_*: \Pi^{F\sf{\textendash
			div}}_{X}\to\Pi^{F\sf{\textendash div}}_{Y} $ is a relative gerbe over $\cO_Y(Y)^{\sf perf}=\cO_X(X)^{\sf perf}$. 
\end{Corollary}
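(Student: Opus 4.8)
The plan is to deduce the statement from Theorem \ref{descent-for-proper-fibrations-old} via the Tannakian dictionary, the only substantial point being closedness of the essential image under subobjects. First I would record that the two base rings agree and reduce to the case $f_*\cO_X=\cO_Y$. By Lemma \ref{factorization-of-proper-morphism} we factor $f$ as $X\xrightarrow{g}Z\xrightarrow{h}Y$ with $g_*\cO_X=\cO_Z$ and $h$ a finite universal homeomorphism. Then $\cO_X(X)=\cO_Z(Z)$, while $h$ induces an equivalence $\Vect^{\perf}(Y)\simeq\Vect^{\perf}(Z)$ by Lemma \ref{passing-to-universal-homeomorphism}, hence an isomorphism of gerbes $h_*\colon\Pi^{\Fdiv}_Z\xrightarrow{\simeq}\Pi^{\Fdiv}_Y$; comparing $\End(\unit)$ on both sides gives $\cO_Y(Y)^{\sf perf}=\cO_Z(Z)^{\sf perf}=\cO_X(X)^{\sf perf}$, the asserted common base ring. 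Since $f_*=h_*\circ g_*$ with $h_*$ an isomorphism, it suffices to treat $g$, so I may assume $f_*\cO_X=\cO_Y$. Now I would invoke the Tannakian characterisation of relative gerbes: a morphism of affine gerbes over a ring is a relative gerbe precisely when the associated pullback tensor functor is fully faithful with essential image closed under subobjects (fppf-locally on the target this is the Deligne--Milne faithful-flatness criterion). So I must verify these two properties for $f^*\colon\Vect^{\perf}(Y)\to\Vect^{\perf}(X)$.

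Full faithfulness is exactly Theorem \ref{descent-for-proper-fibrations-old}, which moreover identifies the essential image with the full subcategory $\cC$ of those $\EE\in\Vect^{\perf}(X)$ whose restriction to every geometric fiber $X_{\bar y}$ is trivial. Closedness under subobjects then means: if $\EE\in\cC$ and $\cF\hookrightarrow\EE$ is a subobject in $\Vect^{\perf}(X)$, then $\cF\in\cC$. Fix a geometric point $\bar y$, set $W:=X_{\bar y}$ and $\kappa:=\cO_W(W)^{\sf perf}$ (an algebraically closed field). Since a subobject in $\Vect^{\perf}(X)$ is a sub-$F$-divided bundle with locally free quotient, restriction to $W$ keeps $\cF|_W\hookrightarrow\EE|_W\cong\unit_W^{\oplus r}$ a subobject in $\Vect^{\perf}(W)$. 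Everything therefore reduces to the key claim that every subobject of $\unit_W^{\oplus r}$ in $\Vect^{\perf}(W)$ is trivial.

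To prove the claim I would use Lemma \ref{evaulation-map} together with additivity of rank. Set $\cG:=\unit_W^{\oplus r}/(\cF|_W)$, again an $F$-divided bundle, and apply $\Hom(\unit_W,-)$ to $0\to\cF|_W\to\unit_W^{\oplus r}\to\cG\to0$; this yields an exact sequence $0\to(\cF|_W)^h(W)\to\kappa^r\to\cG^h(W)$, whence $\dim_\kappa(\cF|_W)^h(W)+\dim_\kappa\cG^h(W)\ge r$. By the injectivity of the evaluation maps in Lemma \ref{evaulation-map} we have $\dim_\kappa(\cF|_W)^h(W)\le\rk(\cF|_W)$ and $\dim_\kappa\cG^h(W)\le\rk\cG$, while $\rk(\cF|_W)+\rk\cG=r$. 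Hence all these inequalities are forced to be equalities; in particular the evaluation $(\cF|_W)^h(W)\otimes_\kappa\unit_W\hookrightarrow\cF|_W$ is a monomorphism of $F$-divided bundles of equal rank, so its cokernel is an $F$-divided bundle of rank $0$ and therefore vanishes. Thus $\cF|_W\cong\unit_W^{\oplus s}$ with $s=\dim_\kappa(\cF|_W)^h(W)$, which proves the claim and with it the closedness of $\cC$ under subobjects.

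The main obstacle is precisely this last step: the reduction to $f_*\cO_X=\cO_Y$ and the dictionary are formal, and fiberwise triviality of pullbacks is handed to us by Theorem \ref{descent-for-proper-fibrations-old}, but upgrading ``subobject'' to ``trivial on fibers'' requires the rigidity of trivial $F$-divided bundles on the geometric fibers, for which the injectivity of the evaluation map (Lemma \ref{evaulation-map}) is the essential input. Granting the claim, $f^*$ is fully faithful with essential image $\cC$ closed under subobjects, so $f_*\colon\Pi^{\Fdiv}_X\to\Pi^{\Fdiv}_Y$ is a relative gerbe over $\cO_X(X)^{\sf perf}=\cO_Y(Y)^{\sf perf}$, as required.
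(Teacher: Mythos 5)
Your proposal is correct and follows essentially the same route as the paper: reduce via Lemma \ref{factorization-of-proper-morphism} (and Lemma \ref{passing-to-universal-homeomorphism}), invoke the subobject criterion for relative gerbes, use Theorem \ref{descent-for-proper-fibrations-old} for full faithfulness and the fiberwise-triviality description of the essential image, and conclude by showing that a subobject of a trivial $F$-divided bundle on a geometric fiber is itself trivial. The only divergence is at this last step, where the paper simply quotes that $\Vect^{\perf}(X_{\bar y})$ is a Tannakian category over $\kappa(\bar y)$ (Theorem \ref{F-divided-sheaves-form-Tannakian-category}), so that a subobject of a trivial object must be trivial, whereas you re-derive this fact by hand from the injectivity of the evaluation map (Lemma \ref{evaulation-map}) together with a rank count --- a correct, self-contained substitute for the same Tannakian generality.
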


\begin{proof}
  By \cite[Lemma 1.6]{Langer-Zhang2025} it is sufficient to show that for any $\EE\in\Vect^\perf(Y)$, 
  any subobject $\FF\subseteq f^*\EE$ is contained in the essential image of $f^*$.
    Thanks to Lemma \ref{factorization-of-proper-morphism}, it is enough to show that $\FF|_{X_{\bar{y}}}$ is trivial for all geometric points $\bar{y}$ of $Y$. Since $\Vect^\perf(X_{\bar{y}})$ is a $\kappa(\bar{y})$-Tannakian category (cf.~Theorem \ref{F-divided-sheaves-form-Tannakian-category}) and $\FF|_{X_{\bar{y}}}$ is a subobject of the trivial object $\EE|_{X_{\bar{y}}}$, it must be trivial.   
\end{proof}

{

	\section{The relative Gieseker's problem}

In this section $k$ is an algebraically closed field of positive characteristic.

\subsection{Genuinely ramified morphisms}

When considering a finite  morphism  $f: X\to Y$  of normal projective $k$-varieties, we  fix an ample line bundle $\cO_Y(1)$ on $Y$ and we always consider (slope or Gieseker) (semi)stability on $Y$ with respect to $\cO_Y(1)$ and  (semi)stability on $X$ with respect to $f^*\cO_Y(1)$.
	
The following definition was first introduced in \cite[Definition 3.1]{Parameswaran-Subramanian2010} in the curve case and then generalized to higher dimensional smooth varieties in \cite{Biswas-Parameswaran-Kumar2025}. In the definition we allow normal varieties but we restrict to regular parts.

Let $f: X\to Y$ be a finite surjective morphism of normal projective $k$-varieties.
Then there exists a big open subset $V\subset Y_{\sf reg}$ with $U:=f^{-1}(V)\subset X_{\sf reg}$. Since we have canonical isomorphisms  $\pi^{\sf{\acute et}}_1(U)\simeq \pi^{\sf{\acute et}}_1 (X_{\sf reg})$ and $\pi^{\sf{\acute et}}_1(V)\simeq \pi^{\sf{\acute et}}_1 (Y_{\sf reg})$, the restriction  $f|_U: U\to V$ defines a canonical homomorphism $f_*: \pi^{\sf{\acute et}}_1(X_{\sf reg})\to\pi^{\sf{\acute et}}_1(Y_{\sf reg}) $.

\begin{Definition}
We say that $f$ is \emph{regularly genuinely ramified} if  the field extension $K(X)/K(Y)$ is separable (or, equivalently, $f$ is generically smooth) and the induced homomorphism $f_*: \pi^{\sf{\acute et}}_1(X_{\sf reg})\to\pi^{\sf{\acute et}}_1(Y_{\sf reg}) $ is surjective. 
\end{Definition}

	The proof of the following lemma is the same as that of  \cite[Lemma 6.4]{Biswas-Parameswaran-Kumar2025} and we skip it.
	
	\begin{Lemma}
		Let $f: X\to Y$ be a regularly genuinely ramified morphism of normal projective $k$-varieties.  Assume that the field extension $K(X)/K(Y)$ is Galois of degree $d$.
		Then there exists a big open subset $V\subset Y_{\sf reg}$ with $U:=f^{-1}(V)\subset X_{\sf reg}$
		and  line bundles $L_{j}\subsetneq \cO_{U}$  for $j=1,...,d-1$ such that
		$$ (f|_V)^*((f|_U)_*\cO_U/\cO_V) \subset \bigoplus _{j=1}^{d-1} L_j.$$ 
	\end{Lemma}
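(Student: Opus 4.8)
The plan is to realize $(f|_U)^*E$ inside an explicit direct sum of line bundles built from the graphs of the Galois automorphisms, and to use genuine ramification \emph{precisely} to guarantee that each of the $d-1$ summands can be taken to be a proper subsheaf of $\cO_U$.

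First I would replace the birational $G$-action by an honest one. Set $G=\mathrm{Gal}(K(X)/K(Y))$, $|G|=d$. Each $g\in G$ is a birational self-map of the normal projective variety $X$, hence (normal varieties are regular in codimension one and the target is proper) a morphism outside a closed subset of codimension $\ge 2$; since $G$ is finite there is a $G$-stable big open $U_0\subseteq X_{\mathrm{reg}}$ on which $G$ acts by automorphisms. Then I would put $V:=Y\setminus f(X\setminus U_0)$ and $U:=f^{-1}(V)$; as $f$ is finite, $f(X\setminus U_0)$ has codimension $\ge 2$, so $V\subseteq Y_{\mathrm{reg}}$ is big, $U\subseteq X_{\mathrm{reg}}$ is big and $G$-stable, and $f|_U\colon U\to V=U/G$ is finite flat of degree $d$ (miracle flatness), generically étale Galois with group $G$. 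Writing $A:=(f|_U)_*\cO_U$, which is locally free of rank $d$, the unit $\cO_V\hookrightarrow A$ is fibrewise injective, hence a subbundle, so $E=(f|_U)_*\cO_U/\cO_V$ is locally free of rank $d-1$.

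Next I would pull back and apply flat base change. On the Cartesian square with $W:=U\times_V U$ and projections $p_1,p_2$, one gets $(f|_U)^*A\cong (p_1)_*\cO_W$, compatibly with the unit $\cO_U\to (f|_U)^*A$, whose image is the structural section $(1,\dots,1)$. For each $g\in G$ the graph $\Gamma_g=(\mathrm{id}_U,g)\colon U\to W$ is a section of $p_1$, and restriction of functions defines $\rho_g\colon (p_1)_*\cO_W\to\cO_U$, concretely $\rho_g(h)(x)=h(x,g(x))$. Over the étale locus the graphs are disjoint, so $(\rho_g)_{g\in G}$ is an isomorphism onto $\cO_U^{\,G}$; in particular the combined map is injective on the torsion-free sheaf $(f|_U)^*A$. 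Since each $\rho_g$ sends the unit section to $1\in\cO_U$, every difference $\phi_{g,g'}:=\rho_g-\rho_{g'}$ kills the image of the unit and hence descends to a map $(f|_U)^*E\to\cO_U$ with the same image. The crucial observation is that $\phi_{g,g'}$ vanishes along the fixed divisor of $s:=g^{-1}g'$, because $h(x,g(x))=h(x,g'(x))$ whenever $s(x)=x$; consequently $\mathrm{im}(\phi_{g,g'})\subseteq\cO_U(-D)$ whenever $s$ lies in the inertia group $I_D$ of a ramification divisor $D\subset U$, and then $\cO_U(-D)\subsetneq\cO_U$ is proper.

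This is where the hypothesis is used. Let $H\le G$ be generated by the inertia groups $I_D$ of all ramification divisors $D\subset U$. The quotient $U/H\to V$ is finite and étale in codimension one, so by purity of the branch locus ($V$ regular, $U/H$ normal) it is a connected étale cover of $V$, extending to a connected étale cover $Z\to Y$ through which $f$ factors; genuine ramification forbids nontrivial connected étale subcovers, forcing $H=G$. Now consider the graph on the vertex set $G$ with an edge between $g$ and $g'$ whenever $g^{-1}g'$ lies in some $I_D$: its connected component of $e$ is exactly $H$, so $H=G$ means the graph is connected, and I would choose a spanning tree of $d-1$ edges $(g_j,g_j')$ with $g_j^{-1}g_j'\in I_{D_j}$. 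The associated maps $\phi_j:=\phi_{g_j,g_j'}$ land in the proper line bundles $L_j:=\cO_U(-D_j)\subsetneq\cO_U$, and the combined map $(f|_U)^*E\to\bigoplus_{j=1}^{d-1}L_j$ is injective, since over the étale locus it is the isomorphism $\cO_U^{\,G}/\cO_U\cdot(1,\dots,1)\xrightarrow{\sim}\cO_U^{\,d-1}$ given by the spanning-tree differences and $(f|_U)^*E$ is torsion-free. This yields the desired embedding. I expect the genuine obstacle to be the connectivity step: one must know that genuine ramification is equivalent to $G$ being generated by the inertia of the codimension-one ramification (via purity of the branch locus and the characterization of genuine ramification through absence of étale subcovers). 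This is essential because an individual $g\ne e$ need not fix any divisor, so the naive choice ``evaluate at $\Gamma_g$'' would produce $L_g=\cO_U$ and fail; only the difference maps along a spanning tree force all $d-1$ line bundles to be proper. Working with the differences $\phi_{g,g'}$ throughout also avoids the normalized trace, keeping the argument valid when $p\mid d$.
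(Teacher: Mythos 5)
Your proof follows essentially the same route as the one the paper invokes (the paper cites \cite[Lemma 6.4]{Biswas-Parameswaran-Kumar2025} and omits the argument): identify $(f|_U)^*(f|_U)_*\cO_U$ with functions on $U\times_V U$, restrict to the graphs $\Gamma_g$, take differences along a spanning tree to kill the unit, and use generation of the Galois group by divisorial inertia to force each of the $d-1$ resulting line bundles to be a proper subsheaf of $\cO_U$. Your execution of the sheaf-theoretic part is correct, with two small remarks: $G$ acts on $X$ by honest automorphisms (since $X$ is the integral closure of $Y$ in $K(X)$), so the birational discussion is unnecessary; and you should take $V:=Y_{\sf reg}\setminus f(X\setminus U_0)$, since the $V$ you define need not lie in $Y_{\sf reg}$. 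When $Y$ is smooth, your proof is complete and is the intended one.

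However, for the statement as given, where $Y$ is only normal, there is a genuine gap at exactly the step you flagged as crucial. Purity does give that $U/H\to V$ is étale ($V$ is regular), but your next assertion --- that this cover extends to a connected étale cover $Z\to Y$ through which $f$ factors, so that surjectivity of $\pi_1^{\et}(X)\to\pi_1^{\et}(Y)$ forces $H=G$ --- requires purity on $Y$ itself: the normalization $Z$ of $Y$ in $K(X)^H$ is étale over $V$, but $Y\setminus V$ contains $Y_{\sf sing}$, where $Z\to Y$ may be ramified; and $\pi_1^{\et}$-surjectivity only excludes nontrivial étale covers of $Y$, not of $V$. This gap cannot be closed, because the inertia-generation claim (and indeed the lemma itself, with the paper's definition of genuine ramification) fails for normal $Y$: take $p>2$, $X=A$ an abelian surface and $Y=A/\{\pm 1\}$. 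Then $K(X)/K(Y)$ is Galois of degree $2$, and $\pi_1^{\et}(Y)$ is trivial (any étale cover of $Y$ pulls back to a connected étale cover of its minimal resolution, which is a K3 surface), so $f$ is genuinely ramified in the sense of the paper; yet $f$ ramifies only at the two-torsion points, which have codimension $2$, so $H=\{e\}\neq G$. Concretely, $f$ is étale over every big open $V\subset Y_{\sf reg}$, hence $(f|_U)_*\cO_U/\cO_V$ is a $2$-torsion line bundle whose pullback to $U$ is $\cO_U$, and there is no injection $\cO_U\hookrightarrow L_1\subsetneq\cO_U$ because $H^0(U,\cO_U(-D))=0$ for every nonzero effective divisor $D$ on $U$. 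So the obstruction is not an oversight in your write-up but lies in the passage from smooth to normal $Y$ --- the very point the paper glosses over by asserting that the proof is ``the same'' as in the smooth case --- and any correct argument must either assume $Y$ smooth or strengthen the hypothesis (e.g.\ to surjectivity of $\pi_1^{\et}(U)\to\pi_1^{\et}(V)$) precisely at this step.
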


As a corollary (see  \cite[Lemma 6.5]{Biswas-Parameswaran-Kumar2025} for a slightly weaker version for Gieseker semistable vector bundles in higher dimensions) we obtain the following result:
	
	\begin{Corollary}
		Let $f: X\to Y$ be a regularly genuinely ramified  morphism of normal projective $k$-varieties. Then for any slope semistable reflexive sheaf $E$  on $Y$ we have
		$$\mu _{\sf max} ((E\otimes (f_*\cO_X/\cO_Y) )^{**})<\mu (E).$$
	\end{Corollary}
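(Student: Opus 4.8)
The plan is to reduce to the case where $K(X)/K(Y)$ is Galois and then combine the preceding Lemma with a pullback argument carried out on the big open subset it produces. For the reduction, I would let $g\colon Z\to Y$ be the Galois closure of $f$ (the normalization of $Y$ in a Galois closure of $K(X)/K(Y)$), with $h\colon Z\to X$ so that $g=f\circ h$. By the theory of genuinely ramified morphisms the Galois closure of a genuinely ramified morphism is again genuinely ramified (see \cite{Parameswaran-Subramanian2010, Biswas-Parameswaran-Kumar2025}), so $g$ is genuinely ramified and Galois. The unit $\cO_X\hookrightarrow h_*\cO_Z$ gives, after applying the exact functor $f_*$, inclusions $\cO_Y\hookrightarrow f_*\cO_X\hookrightarrow g_*\cO_Z$, hence an inclusion of reflexive sheaves $Q:=f_*\cO_X/\cO_Y\hookrightarrow Q':=g_*\cO_Z/\cO_Y$. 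Tensoring by $E$ and taking reflexive hulls yields an inclusion $(E\otimes Q)^{**}\hookrightarrow(E\otimes Q')^{**}$ (injective in codimension one), whence $\mu_{\sf max}((E\otimes Q)^{**})\le\mu_{\sf max}((E\otimes Q')^{**})$. It therefore suffices to prove the inequality for $g$, so from now on I may assume $f$ is Galois of degree $d=[K(X):K(Y)]$.

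In the Galois case I would fix the data from the Lemma: the big open subset $V\subset Y_{\sf reg}$, its preimage $U=f^{-1}(V)\subset X_{\sf reg}$, and the line bundles $L_j\subsetneq\cO_U$ for $j=1,\dots,d-1$ with $(f|_V)^*Q|_V\subseteq\bigoplus_{j}L_j$. Two auxiliary facts are needed. First, since $U$ and $V$ are big open subsets of the regular loci, all slopes and degrees may be computed there and reflexive sheaves are recovered from their restrictions to them. Second, each proper inclusion $L_j\subsetneq\cO_U$ of reflexive rank-one sheaves must already fail to be an isomorphism in codimension one (otherwise the two reflexive hulls would coincide), so $\cO_U/L_j$ has a divisorial component and $\deg L_j<0$ with respect to $f^*\cO_Y(1)$.

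For the main computation, let $W\subseteq(E\otimes Q)^{**}$ be the maximal destabilizing subsheaf, so that $\mu(W)=\mu_{\sf max}((E\otimes Q)^{**})$, and I aim to show $\mu(W)<\mu(E)$. Because $f$ is finite and separable and $E$ is slope-semistable, $f^*E$ is slope-semistable on $X$ (in the Galois case this follows cleanly, since the Harder--Narasimhan filtration of $f^*E$ is $G$-invariant by uniqueness and so descends, forcing it to be trivial), and $\mu(f^*E)=d\,\mu(E)$ by the projection formula. Restricting the chain $W\subseteq(E\otimes Q)^{**}$ and the inclusion of the Lemma to $U$ and pulling back gives an inclusion $f^*W\hookrightarrow\bigoplus_{j}(f^*E\otimes L_j)^{**}$. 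Each summand is slope-semistable of slope $\mu(f^*E)+\deg L_j=d\,\mu(E)+\deg L_j<d\,\mu(E)$, so $\mu_{\sf max}(f^*W)<d\,\mu(E)$ and in particular $\mu(f^*W)<d\,\mu(E)$. Since $\mu(f^*W)=d\,\mu(W)$, I conclude $\mu(W)<\mu(E)$, which is precisely the claimed bound.

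The main obstacle is bookkeeping on normal varieties: one must consistently pass to the big open subsets $U,V$ of the regular loci, use that reflexive sheaves are determined there and that $f^*$ scales slopes by $d$, and justify that $f^*$ preserves slope-semistability. This is exactly why reducing to the Galois case is convenient, since there the semistability of $f^*E$ follows from uniqueness and Galois descent of the Harder--Narasimhan filtration rather than from a more delicate general argument. The other delicate point is the \emph{strict} negativity $\deg L_j<0$: it is essential that each $L_j\subsetneq\cO_U$ is a strict inclusion that already fails in codimension one, as this strictness is precisely what upgrades the estimate to the strict inequality $\mu_{\sf max}((E\otimes(f_*\cO_X/\cO_Y))^{**})<\mu(E)$.
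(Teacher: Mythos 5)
Your Galois-case computation (the second and third paragraphs) is sound and is essentially the intended use of the Lemma: embed $f^*W$ into $\bigoplus_j f^*E\otimes L_j$, note that a proper inclusion of line bundles $L_j\subsetneq\cO_U$ on a normal variety must already fail along a divisor (so $\deg L_j<0$ with respect to the pullback polarization), use semistability of $f^*E$ under Galois pullback, and compare slopes via the scaling factor $d$. The fatal problem is your reduction to the Galois case. The fact you invoke --- that the Galois closure of a genuinely ramified morphism is again genuinely ramified --- is \emph{false}, and cannot be found in the cited papers. Counterexample (tame, char $p>3$): let $Y$ be an elliptic curve and $f\colon X\to Y$ a connected degree-$3$ cover, branched over two points with ramification index $3$, whose monodromy group is $S_3$ (such covers exist: map the generators $a,b$ of $\pi^{\sf{\acute et}}_1(Y)$ to $(12)$ and $e$, and the loops around the two branch points to $(123)$ and $(132)$). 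Since $\deg f=3$ is prime and $f$ is ramified, $f$ cannot factor through a nontrivial \'etale cover of $Y$, so $f$ is genuinely ramified. But all inertia of the Galois closure $g\colon Z\to Y$ lies in $A_3$, so $g$ factors through the connected \'etale double cover $Y_1=Z/A_3\to Y$ (the quadratic resolvent); hence $g$ is \emph{not} genuinely ramified and the Lemma simply does not apply to it.

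Moreover, the reduction is irreparable as stated, not merely unjustified. In the example above, with $E=\cO_Y$ and $Q'=g_*\cO_Z/\cO_Y$, one has the degree-zero line bundle $(Y_1\to Y)_*\cO_{Y_1}/\cO_Y\hookrightarrow Q'$, so $\mu_{\sf max}((Q')^{**})=0$; your inequality $\mu_{\sf max}((E\otimes Q)^{**})\le\mu_{\sf max}((E\otimes Q')^{**})$ then only yields $\le\mu(E)$, and the strict inequality --- which is the whole content of the statement --- is lost exactly where it matters. The correct derivation (the one behind the cited Lemma 6.5 of Biswas--Kumar--Parameswaran) never requires the Galois closure to be genuinely ramified: one keeps the hypothesis on $f$ itself, pulls back along the Galois closure $\gamma\colon \hat{X}\to Y$ with group $G$ and $X=\hat{X}/\Gamma$, and analyzes the components of $\hat{X}\times_Y X$, which are indexed by $G/\Gamma$. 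Genuine ramification of $f$ is precisely the statement that this fiber product is connected; this allows one to order the components so that each meets the union of the previous ones, and the resulting nonzero intersection divisors are what produce line bundles that are \emph{strictly} smaller than $\cO$. In other words, the strict negativity must be extracted from connectedness of $\hat{X}\times_Y X$, i.e.\ from genuine ramification of $f$ itself, not from a (generally nonexistent) genuine ramification of its Galois closure.
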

	
In the following if $E$ is a reflexive sheaf on $Y$ then we write $f^{[*]}E$ to denote the reflexivization of $f^*E$.
The above corollary implies the following corollary (see \cite[Lemma 4.3]{Biswas-Parameswaran2022} for the curve case and \cite[Proposition 2.3]{Biswas-Parameswaran-Kumar2025} for vector bundles on smooth higher dimensional varieties).

	\begin{Corollary} \label{fully-faithful-for-finite-morphism}
		Let $f: X\to Y$ be a regularly genuinely ramified  morphism of normal projective $k$-varieties. Then for any slope semistable reflexive sheaves $E_1, E_2$  on $Y$ with the same slope the natural map
		$$\Hom _{Y} (E_1, E_2)\to \Hom _{X} (f^{[*]}E_1, f^{[*]}E_2)$$	
		is an isomorphism.	
	\end{Corollary}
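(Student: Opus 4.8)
The plan is to identify both $\Hom$ groups with $\Hom$-spaces over a big open subset of $Y$ and then invoke the slope estimate of the preceding Corollary. First I would choose a big open subset $V\subseteq Y$ (with complement of codimension $\ge 2$) over which $E_1$, $E_2$ and $f_*\cO_X$ are locally free, and set $U=f^{-1}(V)$; on $U$ one has $f^{[*]}E_i=f^*E_i$. Since $E_1,E_2$ and the sheaves $f^{[*]}E_i$ are reflexive and $V$, $U$ are big, the restriction maps $\Hom_Y(E_1,E_2)\to\Hom_V(E_1,E_2)$ and $\Hom_X(f^{[*]}E_1,f^{[*]}E_2)\to\Hom_U(f^*E_1,f^*E_2)$ are isomorphisms, because $\Hom$ between reflexive sheaves is the module of global sections of a reflexive sheaf and hence is unchanged by removing a closed subset of codimension $\ge 2$.

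Next I would use adjunction for the finite morphism $f|_U\colon U\to V$ together with the projection formula to obtain
$$\Hom_U(f^*E_1,f^*E_2)\cong \Hom_V\bigl(E_1,(f|_U)_*f^*E_2\bigr)\cong \Hom_V\bigl(E_1,E_2\otimes (f_*\cO_X)|_V\bigr),$$
where the last isomorphism uses local freeness of $E_2$ on $V$. Tensoring the short exact sequence $0\to\cO_V\to (f_*\cO_X)|_V\to (f_*\cO_X/\cO_Y)|_V\to 0$ by the locally free $E_2$ and applying $\Hom_V(E_1,-)$ yields a left-exact sequence
$$0\to \Hom_V(E_1,E_2)\to \Hom_V\bigl(E_1,E_2\otimes (f_*\cO_X)|_V\bigr)\xrightarrow{\ \pi\ } \Hom_V\bigl(E_1,E_2\otimes (f_*\cO_X/\cO_Y)|_V\bigr).$$
Under these identifications the natural map in the statement is precisely the inclusion induced by the unit $\cO_V\to (f_*\cO_X)|_V$, so it is automatically injective; surjectivity amounts to the vanishing of $\pi$, that is, to showing that every homomorphism $E_1\to E_2\otimes (f_*\cO_X/\cO_Y)|_V$ is zero.

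To prove this vanishing I would extend such a homomorphism to a map of reflexive sheaves on $Y$, namely $\bar\psi\colon E_1\to G:=(E_2\otimes (f_*\cO_X/\cO_Y))^{**}$ (again using that $V$ is big). The image of $\bar\psi$ is simultaneously a quotient of $E_1$ and a subsheaf of $G$. Since $E_1$ is slope semistable, every nonzero quotient has slope $\ge \mu(E_1)$, so $\mu_{\sf min}(E_1)=\mu(E_1)$; on the other hand the preceding Corollary applied to $E_2$ gives $\mu_{\sf max}(G)<\mu(E_2)=\mu(E_1)$, using $\mu(E_1)=\mu(E_2)$ and the fact that reflexivization preserves rank and $c_1$, hence slope. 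A nonzero $\bar\psi$ would force $\mu(E_1)=\mu_{\sf min}(E_1)\le \mu(\im\bar\psi)\le \mu_{\sf max}(G)<\mu(E_1)$, a contradiction, so $\bar\psi=0$ and therefore the original map is zero. This establishes surjectivity and completes the proof.

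The step I expect to require the most care is the first one: justifying that the relevant $\Hom$ computations for reflexive sheaves are insensitive to passing to the big open sets $V$ and $U$, and verifying that under the adjunction and projection-formula identifications the canonical map of the statement really corresponds to the unit inclusion. The essential geometric input, the slope inequality $\mu_{\sf max}(G)<\mu(E_1)$, is supplied directly by the preceding Corollary, so the argument past this bookkeeping is short.
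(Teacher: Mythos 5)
Your proof is correct and follows essentially the same route as the paper: the paper likewise restricts to a big open subset $V\subset Y_{\sf reg}$ over which the sheaves are locally free and then invokes the argument of Biswas--Parameswaran--Kumar (their Proposition 2.3), whose content is precisely your adjunction/projection-formula computation with the unit sequence $0\to\cO_V\to (f_*\cO_X)|_V\to (f_*\cO_X/\cO_Y)|_V\to 0$, combined with the preceding Corollary's bound $\mu_{\sf max}((E_2\otimes (f_*\cO_X/\cO_Y))^{**})<\mu(E_2)$. The only small detail worth adding is that $(f_*\cO_X/\cO_Y)|_V$ is locally free on $V$ (the unit section of $f_*\cO_X|_V$ is nowhere vanishing since $f$ is surjective), so the comparison map to the double dual is an isomorphism over $V$ and the vanishing of your extension $\bar\psi$ indeed forces the original homomorphism to vanish.
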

	
	\begin{proof}
		There exists a big open subset $V\subset Y_{\sf reg}$ such that $U:=f^{-1}(V)\subset X_{\sf reg}$ and
		restrictions of $E_1$ and $E_2$ to $V$ are locally free. Then the same arguments as that  in \cite[Proposition 2.3]{Biswas-Parameswaran-Kumar2025} show that 
		$$\Hom _{V} (E_1|_V, E_2|_V)\to \Hom _{U} ((f|_U)^*(E_1|_V), (f|_U)^*(E_2|_V))$$	
		is an isomorphism, which implies the required assertion.
	\end{proof}
	
Using the above facts, the following theorem can be proven by adapting the arguments from the proof of  \cite[Theorem 1.1]{Biswas-Parameswaran-Kumar2025}, which treats the case of smooth varieties. We omit the proof. It should be noted, however, that the proof of \cite[Theorem 1.1]{Biswas-Parameswaran-Kumar2025} contains an error in the claim that one can spread out infinitely many bundles simultaneously. This issue can be easily corrected by constructing a model for the Zariski closure of the framed bundles in the representation scheme (see an application of this technique below in the proof of Theorem \ref{isomorphism-for-morphisms}). 

\begin{Theorem}\label{faithful-flatness-for-finite-morphisms}
	Let $f: X\to Y$ be a finite, generically smooth morphism of normal projective $k$-varieties.  If $Y$ is smooth and	$f_*: \pi^{\sf{\acute et}}_1(X)\to\pi^{\sf{\acute et}}_1(Y) $ is an isomorphism then
	$f_*: \pi^{F\sf{\textendash	div}}_1(X)\to\pi^{F\sf{\textendash div}}_1(Y) $ is an isomorphism.
\end{Theorem}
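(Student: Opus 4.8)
The plan is to read the statement through Tannakian duality: since $\pi^{\Fdiv}_1$ is the Tannaka group of $\Vect^\perf$, the map $f_*$ on $F$-divided fundamental groups is an isomorphism if and only if the pullback functor $f^*\colon\Vect^\perf(Y)\to\Vect^\perf(X)$ is an equivalence of Tannakian categories. Full faithfulness is the easy half. Over a normal projective variety the underlying bundles of any $\EE=\{E_i,\sigma_i\}\in\Vect^\perf$ are slope semistable with numerically trivial Chern classes (each $E_i\cong F_X^*E_{i+1}\cong\dots\cong(F_X^n)^*E_{i+n}$, and boundedness of Chern classes together with the scaling of slopes under Frobenius forces this, so the $E_i$ are even strongly semistable of slope $0$). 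Hence Corollary \ref{fully-faithful-for-finite-morphism} applies levelwise: a morphism $\EE\to\FF$ of $F$-divided bundles on $Y$ is a compatible system of maps $E_i\to F_i$ of slope-$0$ semistable bundles, and since $f^{[*]}E_i=f^*E_i$ is already locally free, $\Hom_Y(E_i,F_i)\xrightarrow{\sim}\Hom_X(f^*E_i,f^*F_i)$; passing to the limit gives $\Hom(\EE,\FF)\xrightarrow{\sim}\Hom(f^*\EE,f^*\FF)$. So the content is essential surjectivity: every $\EE\in\Vect^\perf(X)$ descends to $Y$.

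Next I would bring in the representation schemes. Fixing a rank $r$ and a $k$-point $x\in X$ with $y=f(x)$, form $R_X:=R(r,X,x)$ and $R_Y:=R(r,Y,y)$. By Lemma \ref{pullback-of-stable-is-stable} the reflexivized pullback $f^{[*]}$ sends slope-stable reflexive sheaves with trivial Chern classes on $Y$ to ones of the same type on $X$, and by Corollary \ref{fully-faithful-for-finite-morphism} it is injective on isomorphism classes and an isomorphism on the relevant $\Hom$-spaces; so it induces a closed immersion $f^*\colon R_Y\hookrightarrow R_X$. The decisive point is that the absolute Frobenius commutes with $f$: writing $V$ for the Verschiebung self-map $E\mapsto F^*E$, which is defined on the strongly semistable locus, one has $V_X\circ f^*=f^*\circ V_Y$, so the image $f^*(R_Y)$ is a $V_X$-stable closed subscheme of $R_X$.

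The descent of $\EE=\{E_i,\sigma_i\}$ is then read off from the dynamics of $V_X$. Since $V_X^n(E_n)=E_0$, the framed bundle $\xi:=[E_0,\beta]$ lies in the stable image $R_\infty:=\bigcap_{n\ge0}\im(V_X^n)$, a $V_X$-stable subscheme on which $V_X$ is surjective, together with its coherent tower of $V_X$-preimages. Here, instead of trying to spread the infinitely many $E_i$ out separately, I would build a single finite-type model by spreading out the Zariski closure of the framed family inside $R_X$ over a finite field $\FF_q\subset k$ — the same device flagged for the proof of Theorem \ref{faithful-flatness-for-morphisms} — so that $V$, the subscheme $f^*(R_Y)$ and the point $\xi$ are all defined over $\FF_q$. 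Applying Hrushovski's theorem \cite[Corollary 1.2]{Hrushovski2004} to the Verschiebung correspondence on $R_\infty$ then yields Zariski-density of the Frobenius-periodic points, i.e. bundles $E$ with $(F_X^m)^*E\cong E$. By the theorem of Lange and Stuhler these are exactly the étale-trivializable bundles, classified by representations of $\pi^{\sf{\acute et}}_1(X)$; since $f_*\colon\pi^{\sf{\acute et}}_1(X)\to\pi^{\sf{\acute et}}_1(Y)$ is an isomorphism, each such bundle descends and hence lies in $f^*(R_Y)$. As $f^*(R_Y)$ is closed and contains a Zariski-dense set of points of $R_\infty$, it contains all of $R_\infty$; in particular $\xi\in f^*(R_Y)$, so $E_0\cong f^*F_0$ for some bundle $F_0$ on $Y$. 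The same argument applied to each shifted system $\{E_{i+j}\}_j$ descends every $E_i$ to $F_i$, and full faithfulness writes each $\sigma_i$ uniquely as $f^*(\tau_i)$ with $\tau_i\colon F_Y^*F_{i+1}\to F_i$ an isomorphism (as $f^*$ reflects isomorphisms), so $\FF:=\{F_i,\tau_i\}$ is an $F$-divided bundle on $Y$ with $f^*\FF\cong\EE$.

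I expect the genuine obstacle to be the simultaneous spreading-out together with the correct invocation of Hrushovski's theorem — and this is precisely where the smoothness of $Y$ enters. Spreading the $E_i$ out one at a time does not produce a single model and is the gap present in \cite{Biswas-Parameswaran-Kumar2025}; the fix is to descend the whole framed family at once via the closure construction above, and to arrange $V_X$, $f^*(R_Y)$ and $\xi$ over one finite field. Transporting the identification $f_*\colon\pi^{\sf{\acute et}}_1(X)\xrightarrow{\sim}\pi^{\sf{\acute et}}_1(Y)$ through this spreading out, so as to match Frobenius-periodic points with étale-trivializable bundles descending to $Y$, requires control of étale fundamental groups under spreading out of the finite morphism $f$, which is available for smooth $Y$ but not for general normal varieties. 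A secondary point to verify carefully is that $f^*\colon R_Y\hookrightarrow R_X$ is genuinely a closed immersion, for which one uses the finiteness of $f$ together with the preservation of semistability in both directions.
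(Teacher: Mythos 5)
Your strategy coincides with the paper's intended proof: the paper actually \emph{omits} the argument for this theorem, stating that it is obtained by adapting \cite[Theorem 1.1]{Biswas-Parameswaran-Kumar2025} using Corollary \ref{fully-faithful-for-finite-morphism} and Lemma \ref{pullback-of-stable-is-stable}, with the spreading-out error of that paper repaired by building a finite-type model of the Zariski closure of the framed bundles inside the representation scheme (this device is carried out in the proof of Theorem \ref{isomorphism-for-morphisms}, even though the paper's cross-reference points to Theorem \ref{faithful-flatness-for-morphisms}). Your full-faithfulness argument and your identification of where smoothness of $Y$ enters (spreading out the isomorphism of \'etale fundamental groups, i.e.\ \cite[Proposition 4.1]{Biswas-Parameswaran-Kumar2025} and \cite[Lemma 3.2]{Sun-Zhang2025}) agree with the paper.

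However, the dynamical step has genuine defects as written. First, Hrushovski's theorem cannot be applied to $R_\infty=\bigcap_{n\ge 0}\im(V_X^n)$: the Verschiebung is only a rational map, and not even a $k$-morphism (one must untwist by the relative Frobenius, and over a closed point $s$ of the base compose $m_s=(\kappa(s):\FF_p)$ Frobenius twists to obtain an honest rational endomorphism); its images are at best constructible, and the infinite intersection is not a finite-type scheme carrying a dominant self-correspondence. The object that works is the Zariski closure $A$ of $\{(E_n,\beta_n)\}_{n\ge j}$, which is independent of $j$ for $j\gg 0$; this stabilization is what makes the twisted Verschiebung dominant on $A$, and it also replaces your closed-immersion claim, which as justified is a gap: full faithfulness gives at best an unramified monomorphism $R_Y\to R_X$, and closedness of its image would need a Langton-type properness argument you do not supply. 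Instead, once the image of the pullback map is dense in $A$ it contains a dense open subset (Chevalley), hence infinitely many of the $(E_n,\beta_n)$, since these are dense in $A$ for \emph{every} starting index $j$; descending infinitely many $E_n$ and then the transition isomorphisms via Corollary \ref{fully-faithful-for-finite-morphism} descends all of $\EE$. Second, ``spreading out over a finite field $\FF_q\subset k$'' is not meaningful for a general algebraically closed $k$: one spreads out over a finitely generated $\FF_p$-subalgebra $R\subset k$ and applies Hrushovski's theorem \cite[Corollary 1.2]{Hrushovski2004} on the fibers over closed points $s$ of $\Spec R$, transferring dominance back to the geometric generic fiber afterwards. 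This is not a pedantic point: the periodic points produced are bundles on $X_{\bar s}$, not on $X$, so descending them requires the isomorphism $\pi^{\sf{\acute et}}_1(X_{\bar s})\to\pi^{\sf{\acute et}}_1(Y_{\bar s})$ on closed fibers, and passing from the hypothesis on $X\to Y$ to this fiberwise statement is exactly the purity/smoothness step you flag at the end --- your ``everything is defined over $\FF_q$'' framing makes that step invisible rather than addressing it. With $A$ in place of $R_\infty$ and the two-step spreading out in place of $\FF_q$, your outline becomes the paper's intended argument.
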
	

Unfortunately, \cite[Proposition 4.1]{Biswas-Parameswaran-Kumar2025} that is used in the above theorem uses the Zariski--Nagata purity theorem and hence it can be generalized only in case $Y$ is smooth (see \cite[Lemma 3.2]{Sun-Zhang2025}). The use of \cite[Lemma 3.2]{Sun-Zhang2025} is the main reason why we add assumption that $Y$ is smooth in the above theorem. Another reason for assuming that $Y$ is smooth comes from the above lemmas. Note also that one cannot easily prove that in the above setup if  $Y$ is smooth and	$f_*: \pi^{\sf{\acute et}}_1(X_{\sf reg})\to\pi^{\sf{\acute et}}_1(Y) $ is an isomorphism then
$f_*: \pi^{F\sf{\textendash	div}}_1(X_{\sf reg})\to\pi^{F\sf{\textendash div}}_1(Y) $ is an isomorphism. In this case one runs into problems  with a specialization homomorphism on the \'etale fundamental group for the non-proper variety $X_{\sf reg}$ (cf. \cite[Section 4]{Esnault-Srinivas2016}).

\medskip

In the next section we will also need the following analogue of \cite[Theorem 4.4]{Biswas-Parameswaran2022}.
Since the proof, as in case of Corollary \ref{fully-faithful-for-finite-morphism} is very 
similar to that from the curve case, we omit it (see also \cite[Theorem 1.1]{Patel-Weissmann2026}).

\begin{Lemma}\label{pullback-of-stable-is-stable}
	Let $f: X\to Y$ be a regularly genuinely ramified finite morphism of normal projective $k$-varieties. Then for any slope stable reflexive sheaf $E$  on $Y$ the reflexivized pullback $f^{[*]}E$ is also slope stable.
\end{Lemma}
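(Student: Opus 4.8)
The plan is to show that $f^{[*]}E$ is simultaneously \emph{simple} and admits no \emph{descending} destabilizing quotient, and then to deduce stability from these two facts. Throughout I work with reflexive sheaves on the normal varieties $X,Y$, measuring slopes with respect to $\cO_Y(1)$ and $f^*\cO_Y(1)$; recall that for the finite morphism $f$ of degree $d:=\deg f$ one has $\mu(f^{[*]}F)=d\,\mu(F)$, and that the reflexivized pullback of a slope-semistable reflexive sheaf under a finite morphism is again slope-semistable. In particular $f^{[*]}E$ is slope-semistable of slope $d\,\mu(E)$, so the only thing at stake is the strictness of the slope inequality for proper saturated subsheaves.

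First I would record that $f^{[*]}E$ is simple. Since $E$ is slope-stable and $k$ is algebraically closed, $\End_Y(E)=k$; applying Corollary~\ref{fully-faithful-for-finite-morphism} with $E_1=E_2=E$ gives $\End_X(f^{[*]}E)\cong\End_Y(E)=k$, so $f^{[*]}E$ admits no nontrivial idempotents. This already rules out $f^{[*]}E$ being a direct sum of two sheaves of equal slope, but, as the non-split extension of two non-isomorphic stable sheaves of equal slope shows, simplicity alone does not force stability; the genuine-ramification hypothesis must be used quantitatively to exclude such extensions.

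The decisive step is a descent statement for destabilizing quotients. Suppose, for contradiction, that $f^{[*]}E$ is not stable. Then, being semistable, it has a proper saturated subsheaf $W$ with $\mu(W)=\mu(f^{[*]}E)$ and $0<\rk W<\rk E$, which we may choose so that $B:=f^{[*]}E/W$ is semistable of the same slope. I would push $W$ forward and analyze it inside $f_*f^{[*]}E\cong(E\otimes f_*\cO_X)^{**}$, splitting off the unit summand: writing $W_0:=f_*W\cap E$ for the intersection with the sub $E=E\otimes\cO_Y$, the quotient $f_*W/W_0$ embeds into $(E\otimes(f_*\cO_X/\cO_Y))^{**}$, whose maximal slope is $<\mu(E)$ by the genuine-ramification estimate recorded in the Corollary preceding~\ref{fully-faithful-for-finite-morphism}. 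Comparing this with the lower bound on the minimal slope of $f_*W$ coming from the semistability of $W$ of slope $d\,\mu(E)$, one forces $f_*W/W_0$ to vanish, i.e.\ $f_*W$ lands in the unit summand $E$. Consequently $W$ descends to a subsheaf $W_0\subseteq E$ with $\mu(W_0)=\mu(E)$ and $0<\rk W_0<\rk E$, contradicting the stability of $E$. Hence $f^{[*]}E$ is stable.

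The main obstacle is exactly this last comparison of slopes through the ramified pushforward $f_*$: because $f$ is ramified, $f_*$ does not preserve semistability on the nose, and the precise degree/ramification bookkeeping—matching the lower bound for the minimal slope of $f_*W$ against the bound $\mu_{\sf max}((E\otimes(f_*\cO_X/\cO_Y))^{**})<\mu(E)$—is where genuine ramification enters essentially. I would carry this out as in \cite[Lemma 3.5]{Parameswaran-Subramanian2010} in the curve case and \cite[Proposition 2.3]{Biswas-Parameswaran-Kumar2025} for smooth higher-dimensional varieties, using the simplicity established above (together with saturatedness of $W$) to guarantee that the descended inclusion $W_0\subseteq E$ is genuinely proper and of the same slope. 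I do not expect the reduction to reflexive sheaves over the big open locus $U=f^{-1}(V)$ to cause trouble, since all the constructions are compatible with reflexive hulls across the codimension-$\ge 2$ boundary.
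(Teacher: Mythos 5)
Your opening moves (semistability of $f^{[*]}E$ and simplicity via Corollary~\ref{fully-faithful-for-finite-morphism}) are fine and match the final step of the paper's proof, but your ``decisive step'' has a genuine gap: the claim that semistability of $W$ of slope $d\,\mu(E)$ forces $f_*W/W_0=0$, i.e.\ $f_*W\subseteq E$, is false. The bound you would need, $\mu_{\sf min}(f_*W)\ge \mu(E)$, fails whenever $f$ is actually ramified: pushforward along a ramified finite map strictly lowers slopes (on curves, Riemann--Roch gives $\mu(f_*W)=\mu(E)-\deg R/(2d)<\mu(E)$ for $W$ of slope $d\mu(E)$, where $R$ is the ramification divisor). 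So the quotient $f_*W/W_0$, which embeds in $(E\otimes(f_*\cO_X/\cO_Y))^{**}$, is perfectly allowed to be a nonzero sheaf of slopes $<\mu(E)$ --- this is exactly the range the genuine-ramification estimate permits, and no contradiction arises. Worse, nothing in your slope comparison uses that $W$ is a \emph{proper} subsheaf, so the same argument applied to $W=f^{[*]}E$ itself would ``prove'' that $f_*f^{[*]}E=(E\otimes f_*\cO_X)^{**}$, a sheaf of rank $d\cdot\rk E$, embeds in $E$, which is absurd. Concretely, for a separable degree-two map $f:\PP^1\to\PP^1$ (characteristic $\ne 2$, genuinely ramified since $\pi_1^{\textup{\'et}}(\PP^1)=1$) and $E=\cO_{\PP^1}$, one has $f_*\cO_X\simeq \cO\oplus\cO(-1)$, and the quotient $\cO(-1)$ is nonzero. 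Note also that your conclusion misreads what descent means: if $W$ did descend, i.e.\ $W\simeq f^{[*]}W_0$, then $f_*W\simeq (W_0\otimes f_*\cO_X)^{**}$, which is never contained in $E$ for $d\ge 2$; ``$W$ descends'' is not the same as ``$f_*W\subseteq E$''.

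The missing idea is that an arbitrary destabilizing subsheaf has no reason to be defined over $Y$, so one must descend a \emph{canonical} one, and canonicity --- not a slope bound --- is what makes descent possible. This is how the paper argues: reduce to the Galois case by replacing $X$ with the normalization $Z$ of $X$ in the Galois closure of $K(X)/K(Y)$; the socle $S$ of the (semistable) pullback, being the unique maximal polystable subsheaf of the same slope, is invariant under the Galois group $G$ and hence descends to a subsheaf of $E$ of slope $\mu(E)$; stability of $E$ forces this subsheaf to be all of $E$, so the pullback is polystable; finally, $\End f^{[*]}E\simeq \End E=k$ by Corollary~\ref{fully-faithful-for-finite-morphism} rules out more than one stable summand, so the pullback is stable. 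In particular, the genuine-ramification estimate $\mu_{\sf max}((E\otimes(f_*\cO_X/\cO_Y))^{**})<\mu(E)$ enters only through the endomorphism computation (full faithfulness), not through any pushforward bookkeeping of destabilizing subsheaves; if you want to salvage your outline, this Galois/socle mechanism is the step you must substitute for your descent argument.
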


\subsection{The general case}

In the proofs below we use the following important generalization of \cite[Proposition 2.3]{Esnault-Mehta2010} that is contained in \cite[Proposition 6.3]{Langer-Zhang2025}.		

\begin{Proposition}\label{Jordan-Holder-for-F-divided-bundles}
	Let $X$ be a connected normal projective variety defined over a field $k$ of positive characteristic and let us fix an ample line bundle on $X$. For any $\EE\in \Vect^{\perf}(X)$ there exists $n_0\in \ZZ_{\ge 0}$ such that $\EE$ is a successive extension of $F$-divided bundles $\FF\in \Vect^{\perf}(X) $  with the property that all $F_n$ with $n\ge n_0$ are slope stable with numerically vanishing Chern classes.
\end{Proposition}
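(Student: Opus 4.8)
The goal is to prove Proposition \ref{Jordan-Holder-for-F-divided-bundles}: every $F$-divided bundle $\EE = \{E_i,\sigma_i\}$ on a normal projective variety $X$ admits a finite filtration whose graded pieces $\FF = \{F_i,\tau_i\}$ have the property that $F_n$ is slope stable with numerically vanishing Chern classes for all $n \ge n_0$.

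\medskip

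\noindent\textbf{Approach and key steps.} The plan is to build the filtration on the whole $F$-divided object at once, using the internal rigidity of the Frobenius-pullback isomorphisms $\sigma_i$ to transport the slope Harder--Narasimhan/Jordan--H\"older data coherently across the index $i$. First I would record the numerical constraints: since each $E_i$ is (up to Frobenius pullback) isomorphic to $E_{i-1}$ and pullback by $F_X$ multiplies slope by $p^{\dim X - 1}$-type factors, an $F$-divided bundle forces every $E_i$ to have slope (and in fact numerically vanishing Chern classes) compatible across $i$; this is where one shows that semistability degree-$0$ data is the only possibility. Concretely, I would argue that the maximal destabilizing subsheaf of $E_i$ is canonical, and because $F_X^*$ is exact and compatible with the $\sigma_i$, the HN filtrations of the $E_i$ match up under the isomorphisms $\sigma_i : F_X^*E_{i+1}\xrightarrow{\sim} E_i$. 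The slope-normalizing effect of Frobenius pullback then forces the HN polygon to stabilize (become flat) for $i$ large, so that after discarding finitely many indices the bundles are semistable with vanishing Chern classes. This furnishes a sub-$F$-divided bundle (the HN pieces assemble into an $F$-divided subobject once $i \ge n_0$) and by induction on rank reduces us to the semistable case.

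\medskip

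\noindent Second, within the slope-semistable case, I would refine to slope stable pieces by the same transport argument applied to the Jordan--H\"older filtration. The point is that the socle, or more robustly the minimal/maximal slope-stable subsheaves, are canonical up to the ambiguity in choosing isomorphic JH factors, and the isomorphisms $\sigma_i$ carry the JH filtration of $F_X^*E_{i+1}$ to that of $E_i$; since $F_X^*$ preserves slope stability and numerically vanishing Chern classes (the reflexivized-pullback statements of the genuinely ramified section are the relevant stability-preservation facts, and boundedness of semistable sheaves with fixed numerical invariants from the cited \cite{Langer2004}-type results guarantees the filtrations have bounded length), the graded pieces organize into $F$-divided subquotients $\FF$. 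The threshold $n_0$ absorbs the finitely many low indices where stability or the vanishing Chern class condition might fail.

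\medskip

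\noindent\textbf{Main obstacle.} The hard part will be the compatibility/descent of the Jordan--H\"older filtration across all indices $i$ simultaneously, i.e. producing genuine $F$-divided subobjects rather than merely a filtration on each $E_i$ that need not be stabilized by $\sigma_i$. The JH filtration is not unique (only its graded pieces are), so one cannot naively pull back a chosen JH filtration of $E_i$ under $\sigma_i$ and expect it to agree with a chosen one of $E_{i+1}$. I would handle this by working with canonical subobjects that \emph{are} unique---the HN filtration (canonical, so automatically $\sigma_i$-compatible) and then, within a semistable piece, the maximal subsheaf that is isotypic of a fixed stable type or the socle---and by invoking boundedness to ensure the process terminates after finitely many steps with a uniform $n_0$. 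The delicate bookkeeping is ensuring that the chosen canonical subobject at level $i$ is literally the $\sigma_i$-image of the one at level $i+1$, which follows from functoriality of these canonical constructions under the exact, stability-preserving functor $F_X^*$ combined with the isomorphisms $\sigma_i$; once that is established the assembly into $F$-divided bundles and the induction on rank are routine.
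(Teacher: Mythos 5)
First, a point of reference: the paper does not actually prove this proposition; it imports it as \cite[Proposition 6.3]{Langer-Zhang2025}, which in turn generalizes \cite[Proposition 2.3]{Esnault-Mehta2010} from smooth to normal projective varieties. Measured against that proof, your skeleton is the right one: (i) numerical vanishing of Chern classes via $p$-power divisibility and discreteness, (ii) slope semistability of $E_n$ for $n\gg 0$, (iii) refinement to stable $F$-divided subquotients via canonical subobjects plus a boundedness/stabilization argument. (A minor slip: with the polarization held fixed, Frobenius pullback multiplies slopes by $p$, not by ``$p^{\dim X-1}$-type factors''.)

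The genuine gap is that both of your transport mechanisms rest on claims that are false in characteristic $p$, and false precisely because of the phenomenon this proposition exists to overcome: Frobenius pullback does \emph{not} preserve semistability or stability. Consequently the Harder--Narasimhan filtration is not functorial under $F_X^*$, and the pullback of the HN filtration of $E_{i+1}$ need not be the HN filtration of $E_i\cong F_X^*E_{i+1}$; canonicity of HN does not give you the asserted ``matching up''. What is true, and what the cited proof actually uses, is only the inequality $\mu_{\max}(E_i)=\mu_{\max}(F_X^*E_{i+1})\ge p\,\mu_{\max}(E_{i+1})$, obtained by pulling back the maximal destabilizing subsheaf (it destabilizes, but need not be maximal); combined with $\mu_{\max}(E_i)\ge \mu(E_i)=0$ and discreteness of slopes, this forces $\mu_{\max}(E_n)=0$, i.e.\ semistability, for $n\gg 0$ --- so step (ii) survives, but by an inequality argument, not by matching filtrations. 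The failure is more serious in step (iii): your phrase ``the exact, stability-preserving functor $F_X^*$'' assumes exactly what must be proved (note that Lemma \ref{pullback-of-stable-is-stable} cannot substitute: it concerns genuinely ramified, hence separable, finite morphisms, and Frobenius is the antithesis of such a map). In the actual proof, stability of the pulled-back Jordan--H\"older factors is recovered only a posteriori inside the tower: once all $E_n$, $n\ge n_0$, are semistable of slope $0$, every slope-$0$ subquotient of $E_n$ is automatically semistable, so the pullback of a JH filtration of $E_{n+1}$ has semistable graded pieces; this yields the monotonicity $\ell(E_n)\ge \ell(E_{n+1})$ of JH lengths, which therefore stabilize, and only after stabilization are the pulled-back stable factors again stable and the pulled-back JH filtration again a JH filtration. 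Only at that point do canonical subobjects such as the socle become $\sigma_i$-compatible and assemble into genuine $F$-divided subobjects. In short, you have correctly located the main obstacle (making filtrations $\sigma_i$-compatible), but the proposed resolution dissolves it by appeal to a ``functoriality'' that does not exist; supplying the length-monotonicity/stabilization argument is the real content of the proof.
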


	\begin{Theorem}\label{faithful-flatness-for-morphisms}
		Let $f: X\to Y$ be a surjective morphism of normal projective $k$-varieties.  If $Y$ is smooth and	$f_*: \pi^{\sf{\acute et}}_1(X)\to\pi^{\sf{\acute et}}_1(Y) $ is surjective then
		$f_*: \pi^{F\sf{\textendash	div}}_1(X)\to\pi^{F\sf{\textendash div}}_1(Y) $ is faithfully flat.
	\end{Theorem}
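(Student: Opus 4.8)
The plan is to verify the Tannakian criterion for faithful flatness. Since $X$ and $Y$ are projective over the algebraically closed field $k$, we have $\cO_X(X)^{\sf perf}=\cO_Y(Y)^{\sf perf}=k$, so both Tannakian gerbes are neutral affine group schemes over $k$, and $f_*$ is faithfully flat exactly when the pullback $f^*\colon\Vect^\perf(Y)\to\Vect^\perf(X)$ is fully faithful with essential image closed under subobjects. First I would apply Stein factorization to write $f=h\circ g$ with $g\colon X\to Z$ satisfying $g_*\cO_X=\cO_Z$ and $h\colon Z\to Y$ finite; here $Z=\Spec f_*\cO_X$ is again normal projective and $g$ has geometrically connected fibers. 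By Corollary \ref{proper-fibration-relative-gerbe}, $g_*$ is a relative gerbe; in particular $g^*$ is fully faithful with image closed under subobjects, so it suffices to treat $h$. The surjection $\pi^{\sf{\acute et}}_1(X)\twoheadrightarrow\pi^{\sf{\acute et}}_1(Y)$ factors through $\pi^{\sf{\acute et}}_1(Z)$, and $\pi^{\sf{\acute et}}_1(X)\to\pi^{\sf{\acute et}}_1(Z)$ is surjective because $g$ has geometrically connected fibers; hence $\pi^{\sf{\acute et}}_1(Z)\to\pi^{\sf{\acute et}}_1(Y)$ is surjective. Stripping off the purely inseparable part of $K(Z)/K(Y)$ via Lemma \ref{passing-to-universal-homeomorphism}, I may assume $h$ is genuinely ramified.

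Next I would establish the two properties of $h^*$. For full faithfulness, since $h^*$ is a tensor functor it suffices that $\Hom(\unit_Y,\EE)\to\Hom(\unit_Z,h^*\EE)$ be bijective for every $\EE=\{E_n\}\in\Vect^\perf(Y)$ (apply this to $\EE^\vee\otimes\FF$). Here $\Hom(\unit_Y,\EE)=\varprojlim_n H^0(Y,E_n)$, and every component $E_n$ of an $F$-divided bundle is slope semistable of slope $0$ with numerically vanishing Chern classes, so Corollary \ref{fully-faithful-for-finite-morphism} (with $E_1=\cO_Y$, $E_2=E_n$) identifies $H^0(Y,E_n)$ with $H^0(Z,h^*E_n)$ compatibly in $n$, giving the claim. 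For the property that $h^*$ sends simple objects to simple objects, I use that a simple $\EE$ is, by Proposition \ref{Jordan-Holder-for-F-divided-bundles}, one of the building blocks whose components $E_n$ are slope stable of slope $0$ for $n\ge n_0$; by Lemma \ref{pullback-of-stable-is-stable} the bundles $h^*E_n$ are then slope stable of slope $0$. Any $F$-divided subobject of $h^*\EE$ has slope-$0$ semistable components injecting into these stable bundles of the same slope, hence is $0$ or everything, so $h^*\EE$ is simple (and $\End(h^*\EE)=\End(\EE)=k$).

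Finally I would deduce closedness under subobjects. Full faithfulness already forces $\Ext^1_Y(\FF_2,\FF_1)\hookrightarrow\Ext^1_Z(h^*\FF_2,h^*\FF_1)$ to be injective, since a splitting of the pullback of a $Y$-extension descends, by full faithfulness, to a splitting over $Y$. Given $\GG\subseteq h^*\EE$, an induction on the length of $\EE$ reduces the problem to showing that every \emph{simple} subobject $\cS\subseteq h^*\EE$ descends: choosing a simple $\FF^{(1)}\subseteq\EE$, either $\cS=h^*\FF^{(1)}$, or $\cS$ maps isomorphically onto a simple subobject of $h^*(\EE/\FF^{(1)})$ that descends by induction, whereupon the $\Ext^1$-injectivity forces the one-step extension cut out by $\cS$ to split over $Y$, exhibiting $\cS$ in the image of $h^*$. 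Combining the $g$-part with the $h$-part then shows $f_*$ is faithfully flat. The step I expect to be the main obstacle is not this formal dévissage but the verification of the genuinely ramified inputs (Corollary \ref{fully-faithful-for-finite-morphism} and Lemma \ref{pullback-of-stable-is-stable}) \emph{simultaneously for the whole infinite sequence} $\{E_i\}_{i\ge 0}$, with a uniform bound $n_0$: this is precisely where the hypothesis that $\pi^{\sf{\acute et}}_1(X)\to\pi^{\sf{\acute et}}_1(Y)$ is surjective enters, and where one must avoid the gap in \cite{Biswas-Parameswaran-Kumar2025} by constructing a finitely generated model over $\FF_p$ and taking the Zariski closure of the framed bundles $(E_i,\beta_i)$ inside Sun's representation scheme $R(r,X_S,x_S)$ \cite{Sun2019}.
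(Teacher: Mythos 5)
Your proof is correct and follows essentially the same route as the paper: Stein factorization, Corollary \ref{proper-fibration-relative-gerbe} for the connected-fibre part, reduction to a genuinely ramified finite morphism via Lemma \ref{passing-to-universal-homeomorphism}, and verification of the Deligne--Milne criterion using Proposition \ref{Jordan-Holder-for-F-divided-bundles}, Corollary \ref{fully-faithful-for-finite-morphism} and Lemma \ref{pullback-of-stable-is-stable}; your explicit d\'evissage for subobjects (reduce to simple subobjects, then descend the splitting using full faithfulness) is a welcome expansion of the paper's terse ``easy induction''.

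Two corrections are in order. First, your claim that \emph{every} component $E_n$ of an $F$-divided bundle is slope semistable is not what Proposition \ref{Jordan-Holder-for-F-divided-bundles} gives: semistability is guaranteed only for $n\ge n_0(\EE)$, and Frobenius pullback can destroy semistability, so low-index components may be unstable. This is harmless, since $\Hom(\unit_Y,\EE)=\varprojlim_n H^0(Y,E_n)$ depends only on the tail $n\ge n_0$ (equivalently, replace $\EE$ by its shift, which commutes with $h^*$), but it should be stated correctly. Second, and more importantly, your closing paragraph misidentifies where the difficulty lies: the faithful-flatness statement needs no uniform bound across objects, no spreading out over a finitely generated $\FF_p$-model, no representation scheme, and no Hrushovski-type input. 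Proposition \ref{Jordan-Holder-for-F-divided-bundles} already supplies, for each fixed $\EE$, a single $n_0$ valid for the whole sequence, and Corollary \ref{fully-faithful-for-finite-morphism} and Lemma \ref{pullback-of-stable-is-stable} are then applied bundle-by-bundle; the hypothesis that $f_*$ is surjective on \'etale fundamental groups enters only through the definition of genuine ramification, which these two results require. The machinery you invoke at the end (Zariski closure of the framed bundles $(E_n,\beta_n)$ in Sun's representation scheme, models over $\FF_p$, Hrushovski's theorem) is what the paper uses for the \emph{isomorphism} statement, Theorem \ref{isomorphism-for-morphisms}, i.e.\ for essential surjectivity of $f^*$, and plays no role in the theorem at hand.
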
	
	
	\begin{proof}
		The morphism $f$ is proper so we can consider its Stein factorization (see \cite[\href{https://stacks.math.columbia.edu/tag/03H0}{Tag	03H0}]{StacksProject}) into $X \xrightarrow{g} Z \xrightarrow{h} Y$ with $h$ finite and $\mathcal{O}_Z \xrightarrow{\cong}
		g_*\mathcal{O}_X$. Let us also note that $g$ is proper with geometrically connected fibres and $Z$ is normal, integral and projective. So $g_*: \pi^{F\sf{\textendash	div}}_1(X)\to\pi^{F\sf{\textendash div}}_1(Z) $  is faithfully flat by Corollary \ref{proper-fibration-relative-gerbe}.
		Therefore in the following we can assume that $f$ is finite. Then we can find an intermediate field extension $K(X)/L/K(Y)$ so that  $K(X)/L$
		is  purely inseparable and $L/K(Y)$ is separable. Taking as $Z$ the normalization of $Y$ in $L$ we 
		obtain finite morphisms $g: X\to Z$ and $h: Z\to Y$ such that  $g$ is a finite universal homeomorphism and $h$ is  finite generically smooth. By Lemma \ref{passing-to-universal-homeomorphism} $g$
		induces an isomorphism $\pi^{F\sf{\textendash	div}}_1(X)\to\pi^{F\sf{\textendash div}}_1(Z)$ so we can assume that $f$ is generically smooth. 

Let us first note that the pullback functor $f^*: \Vect^{\perf}(Y) \to \Vect^{\perf}(X)$ is fully faithful, i.e.,
		for any $\EE_1, \EE_2 \in \Vect^{\perf} (Y)$ the canonical map  
		$$\Hom (\EE_1, \EE_2)\to \Hom  (f^{*}\EE_1, f^*\EE_2)$$	
		is an isomorphism. Indeed, by  Proposition \ref{Jordan-Holder-for-F-divided-bundles} we can assume  that for all $j$
		both $E_{1,j}$ and $E_{2,j}$ are slope (or even Gieseker) semistable. So the required assertion follows from the fact that for any slope semistable vector bundles $E_1$ and $E_2$ the canonical map 
		$$\Hom _{Y} (E_1, E_2)\to \Hom _{X} (f^*E_1, f^{*}E_2)$$	
	is an isomorphism. This last isomorphism follows from Corollary \ref{fully-faithful-for-finite-morphism}
as 	 $ \pi^{\sf{\acute et}}_1(X_{\sf reg})\to\pi^{\sf{\acute et}}_1(X) $ is surjective, and hence $f$ is regularly genuinely ramified.  

		By \cite[Proposition 2.21]{Deligne-Milne1982} to finish the proof it is sufficient to show that for any $\EE\in\Vect^\perf(Y)$ and any subobject $\GG\subseteq f^*\EE$, $\GG$ is contained in the essential image of $f^*$. By Proposition \ref{Jordan-Holder-for-F-divided-bundles} we can assume  that  $\EE$ admits a filtration by subobjects so that the quotients $\FF_1,..., \FF_m$ are $F$-divided bundles on $Y$  with the property that all $F_{j,n}$ are slope stable with numerically vanishing Chern classes.
		Then by Lemma \ref{pullback-of-stable-is-stable}   $f^*\EE$ is a successive extension of the $F$-divided bundles $f^*\FF_j\in \Vect^{\perf}(X) $  with all $f^*F_{j,n}$ slope stable with numerically vanishing Chern classes. 
		In particular, all $f^*\FF_j$ are simple objects of $\Vect^{\perf}(X)$.  Since the category $ \Vect^{\perf}(X) $ is Tannakian, an easy induction on the rank of $\GG$ shows that $\GG$ is an extension of some of the $F$-divided bundles  $f^*\FF_1,..., f^*\FF_m$ and hence it is contained in  the essential image of $f^*$.
	\end{proof}

	\begin{Remark}
		In case $f$ is a finite genuinely ramified morphism between smooth projective $k$-varieties the above theorem was claimed in \cite[Remark 5.1]{Biswas-Parameswaran-Kumar2025}, who checked that in this case  $f^*: \Vect^{\perf}(Y) \to \Vect^{\perf}(X)$ is fully faithful. It is not clear to the author how they planned to prove the second condition needed for application of  \cite[Proposition 2.21]{Deligne-Milne1982}.
	\end{Remark}

\begin{Remark}
If $f_*: \pi^{\sf{\acute et}}_1(X)\to\pi^{\sf{\acute et}}_1(Y) $ is surjective then $f^*: \Vect^{\perf}(Y) \to \Vect^{\perf}(X)$ is a fully faithful exact functor. However, the essential image of $f^*$ does not need to be a Serre subcategory. For example, if $X$ is an ordinary elliptic curve and $f$ is the map of $X$ to a point
then the essential image of $f^*$ consists of trivial objects but an extension of $\unit_X$ by $\unit_X$ need not be trivial as $\Ext ^1(\unit_X, \unit_X)=H^1(X,\cO_X)\ne 0$ (see, e.g., the proof of \cite[Theorem 15]{dos_Santos2007}).
\end{Remark}

\begin{Theorem}\label{isomorphism-for-morphisms}
Let $f: X\to Y$ be a surjective morphism of normal projective $k$-varieties with  $f_*\cO_X=\cO_Y$. If 	$f_*: \pi^{\sf{\acute et}}_1(X)\to\pi^{\sf{\acute et}}_1(Y) $ is an isomorphism then $f_*: \pi^{F\sf{\textendash	div}}_1(X)\to\pi^{F\sf{\textendash div}}_1(Y) $ is an isomorphism.
\end{Theorem}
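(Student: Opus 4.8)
\emph{Reduction to a closed immersion.} An isomorphism on étale fundamental groups is in particular surjective, so Theorem~\ref{faithful-flatness-for-morphisms} already gives that $f_*\colon \pi^{F\sf{\textendash div}}_1(X)\to\pi^{F\sf{\textendash div}}_1(Y)$ is faithfully flat; equivalently, $f^*\colon \Vect^\perf(Y)\to\Vect^\perf(X)$ is fully faithful and its essential image is closed under subobjects. A faithfully flat morphism of Tannakian gerbes is an isomorphism as soon as it is also a closed immersion, so by the closed-immersion criterion of \cite[Proposition 2.21]{Deligne-Milne1982} it remains to prove that every $\EE\in\Vect^\perf(X)$ is a subquotient of $f^*\FF$ for some $\FF\in\Vect^\perf(Y)$. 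Since $f_*\cO_X=\cO_Y$, the projection formula also shows directly that $f^*$ is fully faithful, and Corollary~\ref{descent-for-proper-fibrations} reduces the statement that $\EE$ lies in the essential image of $f^*$ to the triviality of $\EE$ on the reduced fibres $(X_y)_{\sf red}$ for all closed points $y\in Y$.

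\emph{Reduction to stable objects.} By Proposition~\ref{Jordan-Holder-for-F-divided-bundles} applied on $X$, every $\EE$ is a successive extension of $F$-divided bundles whose members $E_n$ are, for $n\gg 0$, slope stable with numerically vanishing Chern classes. The class of objects that are subquotients of objects in the essential image of $f^*$ is stable under extensions (this uses the faithful flatness just established), so it suffices to treat a single stable $\EE$ of this kind; for such an $\EE$ the goal becomes the sharper statement that $\EE$ itself descends, i.e. $\EE\cong f^*\FF$.

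\emph{Descending a stable $\EE$.} This is the heart of the argument and the only place where the étale hypothesis is used quantitatively. Fix a $k$-point $x\in X$ with image $y=f(x)$, and spread $f$, the point $x$ and the whole sequence $\{E_n\}$ out over a finitely generated $\FF_p$-subalgebra. One then works inside the representation scheme $R(r,X,x)$ of framed slope-semistable bundles with vanishing Chern classes constructed by X.~Sun \cite[Theorem 2.3]{Sun2019}: Frobenius pullback induces the Verschiebung self-map, and the datum of the $F$-divided bundle $\EE$ is precisely a sequence of framed points matched up by this self-map. Hrushovski's theorem \cite[Corollary 1.2]{Hrushovski2004} produces an abundance of Frobenius-periodic points over finite fields; such points correspond to bundles trivialized on a finite étale cover and are therefore controlled by $\pi^{\sf{\acute et}}_1$. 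The isomorphism $\pi^{\sf{\acute et}}_1(X)\xrightarrow{\sim}\pi^{\sf{\acute et}}_1(Y)$ (in particular its injectivity, which by the homotopy exact sequence forces the image of $\pi^{\sf{\acute et}}_1$ of each fibre in $\pi^{\sf{\acute et}}_1(X)$ to be trivial) is then used to show that $\EE$ restricts trivially to the reduced geometric fibres; Corollary~\ref{descent-for-proper-fibrations} now yields $\FF=\{f_*E_n\}\in\Vect^\perf(Y)$ with $f^*\FF\cong\EE$.

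\emph{The main obstacle.} The genuine difficulty, as stressed in the introduction, is uniformity in $n$. Descending each $E_n$ separately---even after the finite purely inseparable base change furnished by \cite[Theorem 1.3]{Bhatt-Scholze2017}---is useless, because the inseparable covers required for different $n$ are dominated only by ever higher Frobenius twists and do not assemble into a single map descending the whole tower. The representation scheme is exactly the device that packages all the $E_n$ into one geometric object equipped with a single Frobenius self-map, so that Hrushovski's theorem and the étale comparison act on the entire $F$-divided datum at once rather than term by term. Keeping precise control of this Verschiebung self-map on the correct representation scheme---the open locus of genuine vector bundles with numerically trivial Chern classes singled out in the Remark following \cite[Theorem 2.3]{Sun2019}, rather than a full Hilbert-polynomial stratum---is the delicate technical point, and it is where the arguments of \cite{Biswas-Parameswaran-Kumar2025} and \cite{Sun-Zhang2025} needed the most care.
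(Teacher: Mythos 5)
Your overall skeleton (faithful flatness from Theorem \ref{faithful-flatness-for-morphisms}, then essential surjectivity via triviality on reduced fibres through Corollary \ref{descent-for-proper-fibrations}, representation schemes, Verschiebung and Hrushovski) is the same as the paper's, but your second paragraph contains a genuine error that breaks the argument. You claim that ``the class of objects that are subquotients of objects in the essential image of $f^*$ is stable under extensions (this uses the faithful flatness just established)'' and use this to reduce to a single stable $\EE$. This is false: faithful flatness does \emph{not} imply extension-stability of the (subquotient-closure of the) essential image. The paper's own Remark following Theorem \ref{faithful-flatness-for-morphisms} is exactly a counterexample: for an ordinary elliptic curve $X$ mapping to a point, $f_*$ on \'etale fundamental groups is surjective, so $f_*$ on $F$-divided groups is faithfully flat, the essential image consists of the trivial objects (which are closed under subquotients), yet a non-split extension of $\unit_X$ by $\unit_X$ exists since $\Ext^1(\unit_X,\unit_X)=H^1(X,\cO_X)\neq 0$ and is not a subquotient of any trivial object. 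Extension-stability of the image is essentially the conclusion you are trying to prove (it fails precisely when injectivity of $f_*$ on $\pi^{\sf{\acute et}}_1$ fails), so your reduction is circular. The paper instead applies Proposition \ref{Jordan-Holder-for-F-divided-bundles} only to arrange that \emph{every member} $E_n$ of the given $\EE$ is Gieseker semistable with numerically vanishing Chern classes, and then runs the whole argument on $\EE$ itself, never decomposing it into stable pieces.

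Your third paragraph also skips the steps that make the argument work for this non-decomposed $\EE$, and they are not routine. First, the \'etale hypothesis must be transported to the \emph{closed} fibres of the spread-out model: Hrushovski's periodic points live on $X_s$ over finite fields, and one needs $f_{\bar s*}:\pi^{\sf{\acute et}}_1(X_{\bar s})\to\pi^{\sf{\acute et}}_1(Y_{\bar s})$ to be an isomorphism. The paper gets this not from a homotopy exact sequence on the generic fibre (which is anyway delicate for a general surjection of normal varieties) but by showing $p_{1*}=p_{2*}$ on $\pi^{\sf{\acute et}}_1(X\times_Y X)$ using injectivity of $f_*$, pushing this equality to the special fibre via surjectivity of specialization maps, and invoking the descent theorem of SGA~1, Exp.~IX. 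Second, and more fundamentally, the periodic points are \emph{not} the points $[(E_n,\beta_n)]$; to conclude anything about $\EE$ one must (a) work with the Zariski closure $A$ of $\{[(E_n,\beta_n)]\}_{n}$ and deduce that the pullback rational map $(f^*)^{-1}(A)\dashrightarrow A$ is dominant, and then (b) use semicontinuity of $h^0$ along the universal family to get $\dim H^0((X_y)_{\sf red},E_n|_{(X_y)_{\sf red}})\ge r$ for every $n$, and finally (c) combine the injectivity of the transition maps $H^0(E_{y,n+1})\to H^0(E_{y,n})$ with Lemma \ref{evaulation-map} to upgrade these dimension bounds to triviality of the $F$-divided restriction $\EE_y$. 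Step (c) is exactly what lets the proof handle extensions (and hence dispense with your false reduction): a bound $h^0\ge r$ for each member separately says nothing about triviality of a single bundle, but for an $F$-divided bundle the evaluation lemma converts it into triviality. None of (a)--(c) appears in your sketch, and without them there is no bridge from Hrushovski's periodic points to the given $\EE$.
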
	
	
\begin{proof}
The condition $f_*\cO_X=\cO_Y$ implies that $f$ has geometrically connected fibres. By Theorem \ref{faithful-flatness-for-morphisms} it is sufficient to show that $f^*: \Vect^{\perf}(Y) \to \Vect^{\perf}(X)$ is essentially surjective. By Corollary \ref{descent-for-proper-fibrations} it is sufficient to show that for any object $\EE$ of $\Vect^{\perf}(X)$ and any closed point $y\in Y$ the  restriction $\EE_y$ of $\EE$ to the reduction $(X_y)_{\sf red}$ of the  fiber $X_{y}$ is trivial. 

In the following we fix some ample line bundle $\cO_{\tilde X}(1)$. By Proposition \ref{Jordan-Holder-for-F-divided-bundles} there exists some $n_0$ such that all $E_n$ with $n\ge n_0$ are Gieseker semistable with numerically vanishing Chern classes. Without loss of generality we can assume that $n_0=0$. Let us fix a $k$-point $x$ of $X$ and consider the representation scheme $R(r, X,  x)/k$.
Let us fix a frame $\beta_0: x^*E_0\simeq \cO_k^{\oplus r}$. This uniquely determines the frames $\beta_n:  x^*E_n\simeq \cO_k^{\oplus r}$ such that $F_k^*\beta_{i+1}=\beta_i$. Let us define the locus $A_j\subset R(r, X)$ as the Zariski closure of the set $\{ (E_n, \beta_n)\} _{n\ge j}\subset R(r,  X) (k)$.

 Since $A_{j+1}\subset A_j$, the sequence $\{ A_j\}_{j\ge 0}$ stabilizes and for large $n$ we have $A_n=A:=\bigcap A_j$. Let $R_0$ be the open subset of $ R(r, X,x)$ corresponding to pairs $[(E, \beta)]$ such that $F_{X}^*E$ is Gieseker semistable. Then pullback by the absolute Frobenius morphism on $X$ defines a morphism $R_0\to  R(r,  X)$. After restricting to $A\cap R_0$ (which is non-empty as it contains all $[(E_n, \beta_n)]$ for sufficiently large $n$) we have a well defined morphism, which gives a dominant rational map $A\dashrightarrow A$. This morphism is not $k$-linear and to make it $k$-linear we need to consider the relative Frobenius morphism $F_{ X/k}:  X\to X'$. Then we have 
$ R(r, X', x')\stackrel{\simeq}{\longrightarrow}  R(r,  X,x)\times _{F_k}k$, where $x'=F_{X/k} (x)$.  
If  $A'$ is the reduced preimage of $A\times _{F_k}k$ then we obtain a dominant rational $k$-morphism $\varphi: A'\dashrightarrow A$, which is the restriction of an analogue of the classical Verschiebung rational map.

Now we need to spread out the whole situation. There exists a finitely generated $\FF_p$-algebra $R\subset k$ and a  scheme $X_S$ of finite type over $S=\Spec R$ such that $X/k$ is isomorphic to the generic geometric fiber of $X_S\to S$.  Shrinking $S$ if necessary we can assume that $X_S\to S$ is proper and flat with geometrically connected fibres. We can also assume that all fibers of $X_S\to S$ are geometrically normal and geometrically integral. 
Similarly, we can find $f_S: X_S\to Y_S$ with $X$ and $Y$ projective over $S$ so that $f_S$ is isomorphic to $f$ over the generic geometric point of $S$. By Zariski's main theorem for all $s\in S$ the morphisms  $f_s:  X_s\to Y_s$ are birational and satisfy $f_{s*}\cO _{ X_s}=\cO _{Y_s}$. We also need a section $x_S: S\to X_S$ which gives the point $x$ at the geometric generix point of $S$.

Then we construct $S$-flat models $A_S\subset R(r, X_S, x_S)$ for $A$ and $A_S'\subset R(r, X'_S, x'_S)$ for $A'$.
We have a dominant rational map of $S$-schemes $\varphi_S:A'_S\dashrightarrow A_S$ extending $\varphi$ and
defined by pullback via the relative Frobenius morphism  $F_{X_S/S}:X_S\to X'_S$.
Shrinking $S$ if necessary we can assume that the restriction $\varphi_s: A'_s\dashrightarrow A_s$ is a dominant rational map for all closed points $s$ of $S$. For such $s$ we let  $\varphi_s^{(i)}$ denote the $i$th Frobenius twist of $\varphi_s$ and set $m_s=(\kappa (s):\FF_p)$. Since $\varphi _s$ is defined by $[(E, \beta )]\to [F_{X_s/s}^*E, F_{\kappa (s)}^*\beta]$, the composition $\varphi_s^{(m_s-1)}\circ ...\circ \varphi_s^{(1)}\circ \varphi _s$ defines a rational endomorphism $A_s\dashrightarrow A_s$ for any closed point $s\in S$. Now (as in \cite{Esnault-Mehta2010}) Hrushovski's theorem \cite[Corollary 1.2]{Hrushovski2004} implies that set of closed points of $A_s$, which are periodic for this rational endomorphism is dense in $A_s$. Such points correspond to framed pairs $[(G, \beta_G)]$, where $G$ is a geometrically Gieseker stable vector bundle on $ X_s$ such that for some $m\ge 1$ we have an isomorphism $(F^m_{ X_s})^*G\simeq G$. If $\bar s$ is a geometric point lying over $s$ then $G_{\bar s}$ on $X_{\bar s}$ gives by  \cite[Proposition 4.1.1]{Katz1972} a representation
of $\pi^{\sf{\acute et}}_1(X_{\bar s})$.

Let us consider $X\times _YX$ with two projections $p_{1}$ and $p_{2}$ onto $X$. Similarly, let us define $p_{1\bar s}$ and $p_{2\bar s}$. By \cite[Expos\'e IX, Corollaire 5.6]{SGA1}
and \cite[Expos\'e IX, Th\'eor\`eme 4.12]{SGA1} and our assumption, it follows that the two homomorphisms
$p_{1*}, p_{2*}: \pi^{\sf{\acute et}}_1(X\times _YX)\to\pi^{\sf{\acute et}}_1(X)$ coincide. By \cite[Lemma 0C0K]{StacksProject} we have a commutative diagram
$$
\xymatrix{\pi^{\sf{\acute et}}_1(X\times _YX)\ar[d]^{\sf sp}\ar@<-.5ex>[r]_-{p_2*} \ar@<.5ex>[r]^-{p_{1*}}& \pi^{\sf{\acute et}}_1(X)\ar[d]^{\sf sp}\\
\pi^{\sf{\acute et}}_1(X_{\bar s}\times _{Y_{\bar s}}X_{\bar s})\ar@<-.5ex>[r]_-{p_{2\bar s*}} \ar@<.5ex>[r]^-{p_{1\bar s*}}&\pi^{\sf{\acute et}}_1(X_{\bar s})\\
}
$$
By Lemma \ref{surjectivity-on-fund-group-for-spreading-out}, shrinking $S$ if necessary, we can assume that the vertical specialization maps are surjective and hence we have $p_{1\bar s*}=p_{2\bar s*}$. But then \cite[Expos\'e IX, Corollaire 5.6]{SGA1} and \cite[Expos\'e IX, Th\'eor\`eme 4.12]{SGA1} imply that
the map $f_*: \pi^{\sf{\acute et}}_1(X_{\bar s})\to\pi^{\sf{\acute et}}_1(Y_{\bar s}) $ is an isomorphism. 

In this way, $G_{\bar s}$ induces a representation of $\pi^{\sf{\acute et}}_1(Y_{\bar s})$, which by  \cite[Proposition 4.1.1]{Katz1972} gives rise to a vector bundle $G'$ on $Y_{\bar s}$ such that  $G_{\bar s}\simeq f_{\bar s}^* G'$. Since the framing $\beta_{G_{\bar s}}:  x_{\bar s}^*G_{\bar s}\simeq \cO_{\kappa (\bar s)}^{\oplus r}$ descends to a framing of $G'$ at $f_{\bar s}(x_{\bar s})$, the rational map $f_s^*: (f_s^*)^{-1}(A_s)\dashrightarrow A_s$, given by the pullback $[(E, \beta )]\to [(f^*_sE, f_s^*\beta)]$, is dominant.

Let us set $y_S=f_S(x_S)$ and consider the rational $S$-map $f_S^*: R(r, Y_S, y_S)\dashrightarrow R(r, X_S, x_S)$ given by the pullback by $f_S$. The above argument shows that the rational map $(f_S^*)^{-1}(A_S)\dashrightarrow A_S$ is dominant as it is dominant over all closed points $s\in S$. It follows that the rational map $(f^*)^{-1}(A)\dashrightarrow A$ over the geometric generic point of $S$ is also dominant. Let $B\subset A$ be a dense open subset contained in the image of the morphism on which this rational map is defined.

Now let us recall that the representation scheme $R(r, X, x)$ comes equipped with a universal family $\cU$ together with a universal framing $\beta_{\cU}$. This is a flat vector bundle on $R(r, X, x)\times _k  X\to R(r, X, x)$, which is geometrically Gieseker semistable on the fibers and such that for any locally Noetherian $k$-scheme $T$ and a $T$-flat family $E_T$ of geometrically Gieseker stable rank $r$ vector bundles with numerically vanishing Chern classes on the fibers of $f_T: T\times X\to T$ with framing $\beta_T$ along $x_T$, if $\varphi _{(E_T, \beta _T)}: T\to R(r, X, x)$ 
denotes the classifying morphism then $E_T\simeq (\varphi _{E_T} \times _k \id _{X})^*\cU$ (and similarly for the framing).

For each $k$-point ${y}$ of $ Y$ and each $k$-point $b$ of $B$ the restriction $\cU|_{\{ b\} \times (X_y)_{\sf red}}$
is trivial as $b=[(f^*E, f^*\beta)]$ for some $[(E, \beta )]\in R(r, Y, f(x))(k)$.
 So by the semicontinuity theorem  for every $a\in A (k)$ we have
$$\dim H^0((X_y)_{\sf red},\cU|_{\{ a\} \times (X_y)_{\sf red}} )\ge r.$$
Let us set  $\EE_y:= \EE|_{(X_y)_{\sf red}}$ and $E_{y, i}:=E_i|_{(X_y)_{\sf red}}$.  By construction and by the above we have
$$\dim H^0((X_y)_{\sf red}, E_{y,i} )\ge r$$
for all $i\ge 0$. Note also that the maps $H^0(E_{y, i+1})\to H^0(E_{y, i})$ are injective as they are induced from restrictions of the injective $p$-linear maps $E_{i+1}\to E_i$. 
This shows that $\EE _y^h ((X_y)_{\sf red})=\varprojlim  H^0(E_{y, i})$ has dimension $\ge r$ and hence $\EE_y$ is trivial by Lemma \ref{evaulation-map}. By Corollary \ref{descent-for-proper-fibrations} this implies that $\EE$ is 
contained in the essential image of $f^*: \Vect^{\perf}(Y) \to \Vect^{\perf}(X)$.		
\end{proof}}

\medskip

\begin{Corollary}\label{cor:isomorphism-for-morphisms}
		Let $f: X\to Y$ be a surjective morphism of normal projective $k$-varieties.   If $Y$ is smooth and	$f_*: \pi^{\sf{\acute et}}_1(X)\to\pi^{\sf{\acute et}}_1(Y) $ is an isomorphism then
		$f_*: \pi^{F\sf{\textendash	div}}_1(X)\to\pi^{F\sf{\textendash div}}_1(Y) $ is an isomorphism.
\end{Corollary}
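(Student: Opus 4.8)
The plan is to factor $f$ through its Stein factorization and reduce to the two situations already settled: the case of geometrically connected fibres, handled in Theorem \ref{isomorphism-for-morphisms}, and the finite genuinely ramified case, handled in Theorem \ref{faithful-flatness-for-finite-morphisms}. Since $f$ is a morphism of projective varieties it is proper, so I would write it as $X\xrightarrow{g} Z\xrightarrow{h} Y$ with $g_*\cO_X=\cO_Z$ (so that $g$ is proper with geometrically connected fibres) and $h$ finite. Exactly as in the proof of Theorem \ref{faithful-flatness-for-morphisms}, the scheme $Z$ is normal, integral and projective, so $g\colon X\to Z$ and $h\colon Z\to Y$ are again surjective morphisms of normal projective $k$-varieties, to which the earlier results may be applied.

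First I would untangle the étale fundamental groups. By \cite[Expos\'e IX, Corollaire 5.6]{SGA1} the homomorphism $g_*\colon\pi^{\sf{\acute et}}_1(X)\to\pi^{\sf{\acute et}}_1(Z)$ is surjective. On the other hand $f_*=h_*\circ g_*$ is an isomorphism, hence injective, which forces $g_*$ to be injective as well; therefore $g_*$ is an isomorphism, and consequently so is $h_*=f_*\circ g_*^{-1}$.

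With this in hand the connected-fibre part is immediate: since $g_*\cO_X=\cO_Z$ and $g_*$ is an isomorphism on étale fundamental groups, Theorem \ref{isomorphism-for-morphisms} applies to $g$ and gives that $g_*\colon\pi^{F\sf{\textendash div}}_1(X)\to\pi^{F\sf{\textendash div}}_1(Z)$ is an isomorphism. It remains to analyse the finite morphism $h\colon Z\to Y$, and this is where the smoothness of $Y$ is used. If $K(Z)/K(Y)$ happens to be separable, then $h$ is genuinely ramified (finite, surjective, with separable function-field extension and surjective map on étale $\pi_1$) and Theorem \ref{faithful-flatness-for-finite-morphisms} applies directly. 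In general I would peel off the inseparable part as in the proof of Theorem \ref{faithful-flatness-for-morphisms}: let $L$ be the separable closure of $K(Y)$ in $K(Z)$ and $W$ the normalization of $Y$ in $L$, producing a factorization $Z\xrightarrow{p} W\xrightarrow{q} Y$ with $p$ a finite universal homeomorphism and $L/K(Y)$ separable, where $W$ is normal (being a normalization) and projective (being finite over $Y$). By Lemma \ref{passing-to-universal-homeomorphism} the map $p$ induces an isomorphism on $F$-divided fundamental groups, and being a universal homeomorphism it also induces an isomorphism on étale fundamental groups; hence $q_*=h_*\circ p_*^{-1}$ is an isomorphism on étale $\pi_1$ and $q$ is genuinely ramified. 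Theorem \ref{faithful-flatness-for-finite-morphisms}, applied to the genuinely ramified morphism $q$ with $Y$ smooth, then shows that $q_*\colon\pi^{F\sf{\textendash div}}_1(W)\to\pi^{F\sf{\textendash div}}_1(Y)$ is an isomorphism. Composing, $h_*=q_*\circ p_*$ and finally $f_*=h_*\circ g_*$ are isomorphisms on $F$-divided fundamental groups, which is the assertion.

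As for the main obstacle, at the level of this corollary it is not an analytic or moduli-theoretic difficulty, since all of that is already packaged inside Theorems \ref{isomorphism-for-morphisms} and \ref{faithful-flatness-for-finite-morphisms} (ultimately Hrushovski's theorem and the spreading-out argument). The delicate points are rather the verification that the Stein factorization $Z$ is normal, so that the cited theorems apply, and, more essentially, the unavoidable smoothness hypothesis on $Y$: it is forced by the finite step, since Theorem \ref{faithful-flatness-for-finite-morphisms} rests on \cite[Lemma 3.2]{Sun-Zhang2025} and hence on the Zariski--Nagata purity theorem, which has no counterpart for general normal $Y$.
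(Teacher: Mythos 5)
Your proof is correct and follows essentially the same route as the paper: Stein factorization, the geometrically-connected-fibres case handled by Theorem \ref{isomorphism-for-morphisms}, and the finite case reduced (via the purely inseparable part and Lemma \ref{passing-to-universal-homeomorphism}) to a genuinely ramified morphism handled by Theorem \ref{faithful-flatness-for-finite-morphisms}. The paper's own proof is a terser version of exactly this argument, and the details you supply --- that $g_*$ and $h_*$ are isomorphisms on \'etale fundamental groups, and that the normalization $W$ is normal and projective --- are precisely the ones the paper leaves implicit by referring back to the proof of Theorem \ref{faithful-flatness-for-morphisms}.
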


\begin{proof}
As in the proof of Theorem \ref{faithful-flatness-for-morphisms} we can divide the proof into two cases: one when $f$ satisfies $f_*\cO_X=\cO_Y$	 and the second one when $f$ is finite. The first case follows from Theorem \ref{isomorphism-for-morphisms}.
In the second case as in the proof of Theorem \ref{faithful-flatness-for-morphisms}  we can reduce to the case when $f$ is genuinely ramified. In this case the assertion follows from Theorem \ref{faithful-flatness-for-finite-morphisms}.
\end{proof}

\medskip

\section*{Acknowledgements}

The  author was partially supported by Polish National Centre (NCN) contract number 2021/41/B/ST1/03741.
The author would like to thank Lei Zhang for useful conversations and previous colaboration on the related project
\cite{Langer-Zhang2025}. He would also like to thank Dario Wei{\ss}mann for pointing out some errors in the first version of the paper.

	\bibliographystyle{amsalpha}
	\bibliography{References}

@incollection{Deligne-Milne1982,
  author       = {Pierre Deligne and James S. Milne},
  title        = {Tannakian Categories},
  pages        = {101--228},
  crossref     = {hodge-cycles},
  booktitle    = {Hodge Cycles, Motives, and Shimura Varieties},
  year         = {1982}
}

@book {hodge-cycles,
AUTHOR = {Deligne, Pierre and Milne, James S. and Ogus, Arthur and Shih,
              Kuang-yen},
     TITLE = {Hodge cycles, motives, and {S}himura varieties},
    SERIES = {Lecture Notes in Mathematics},
    VOLUME = {900},
 PUBLISHER = {Springer-Verlag, Berlin-New York},
      YEAR = {1982},
     PAGES = {ii+414},
      ISBN = {3-540-11174-3},
   MRCLASS = {14Kxx (10D25 12A67 14A20 14F30 14K22)},
  MRNUMBER = {654325},
}

@article {dos_Santos2007,
    AUTHOR = {dos Santos, Jo\~{a}o Pedro Pinto},
     TITLE = {Fundamental group schemes for stratified sheaves},
   JOURNAL = {J. Algebra},
  FJOURNAL = {Journal of Algebra},
    VOLUME = {317},
      YEAR = {2007},
    NUMBER = {2},
     PAGES = {691--713},
      ISSN = {0021-8693},
   MRCLASS = {14L15 (14F05)},
  MRNUMBER = {2362937},
MRREVIEWER = {Alan Koch},
       DOI = {10.1016/j.jalgebra.2007.03.005},
       URL = {https://doi.org/10.1016/j.jalgebra.2007.03.005},
}

@article {Esnault-Mehta2010,
  AUTHOR = {Esnault, H\'{e}l\`ene and Mehta, Vikram},
     TITLE = {Simply connected projective manifolds in characteristic
              {$p>0$} have no nontrivial stratified bundles},
   JOURNAL = {Invent. Math.},
  FJOURNAL = {Inventiones Mathematicae},
    VOLUME = {181},
      YEAR = {2010},
    NUMBER = {3},
     PAGES = {449--465},
      ISSN = {0020-9910},
   MRCLASS = {14F35 (03C60 14D20 14G17)},
  MRNUMBER = {2660450},
MRREVIEWER = {Stefan Schr\"{o}er},
       DOI = {10.1007/s00222-010-0250-2},
       URL = {https://doi.org/10.1007/s00222-010-0250-2},
}

@article {Esnault-Srinivas2016,
    AUTHOR = {Esnault, H\'{e}l\`ene and Srinivas, Vasudevan},
     TITLE = {Simply connected varieties in characteristic {$p>0$}},
      NOTE = {With an appendix by Jean-Beno\^{i}t Bost},
   JOURNAL = {Compos. Math.},
  FJOURNAL = {Compositio Mathematica},
    VOLUME = {152},
      YEAR = {2016},
    NUMBER = {2},
     PAGES = {255--287},
      ISSN = {0010-437X},
   MRCLASS = {14F35 (11G05 14B20 14G17)},
  MRNUMBER = {3462553},
MRREVIEWER = {Stefan Schr\"{o}er},
       DOI = {10.1112/S0010437X15007654},
       URL = {https://doi.org/10.1112/S0010437X15007654},
}

@article {Esnault-Srinivas2019,
    AUTHOR = {Esnault, H\'{e}l\`ene and Srinivas, Vasudevan},
     TITLE = {A relative version of {G}ieseker's problem on stratifications
              in characteristic {$p>0$}},
   JOURNAL = {Int. Math. Res. Not. IMRN},
  FJOURNAL = {International Mathematics Research Notices. IMRN},
      YEAR = {2019},
    NUMBER = {18},
     PAGES = {5635--5648},
      ISSN = {1073-7928},
   MRCLASS = {14F20 (14F35)},
  MRNUMBER = {4012122},
MRREVIEWER = {Florence Lecomte},
       DOI = {10.1093/imrn/rnx281},
       URL = {https://doi.org/10.1093/imrn/rnx281},
}

@article {Gieseker1975,
    AUTHOR = {Gieseker, D.},
     TITLE = {Flat vector bundles and the fundamental group in non-zero
              characteristics},
   JOURNAL = {Ann. Scuola Norm. Sup. Pisa Cl. Sci. (4)},
  FJOURNAL = {Annali della Scuola Normale Superiore di Pisa. Classe di
              Scienze. Serie IV},
    VOLUME = {2},
      YEAR = {1975},
    NUMBER = {1},
     PAGES = {1--31},
      ISSN = {0391-173X},
   MRCLASS = {14D05 (14F05)},
  MRNUMBER = {382271},
MRREVIEWER = {J. H. M. Steenbrink},
       URL = {http://www.numdam.org/item?id=ASNSP_1975_4_2_1_1_0},
}

@book {Huybrechts-Lehn2010,
    AUTHOR = {Huybrechts, Daniel and Lehn, Manfred},
     TITLE = {The geometry of moduli spaces of sheaves},
    SERIES = {Cambridge Mathematical Library},
   EDITION = {Second},
 PUBLISHER = {Cambridge University Press, Cambridge},
      YEAR = {2010},
     PAGES = {xviii+325},
      ISBN = {978-0-521-13420-0},
   MRCLASS = {14D20 (14F05)},
  MRNUMBER = {2665168},
       DOI = {10.1017/CBO9780511711985},
       URL = {https://doi.org/10.1017/CBO9780511711985},
}

@article {Kindler2015,
    AUTHOR = {Kindler, Lars},
     TITLE = {Regular singular stratified bundles and tame ramification},
   JOURNAL = {Trans. Amer. Math. Soc.},
  FJOURNAL = {Transactions of the American Mathematical Society},
    VOLUME = {367},
      YEAR = {2015},
    NUMBER = {9},
     PAGES = {6461--6485},
      ISSN = {0002-9947},
   MRCLASS = {14B05 (14F05 14F10)},
  MRNUMBER = {3356944},
MRREVIEWER = {Anatoly Libgober},
       DOI = {10.1090/S0002-9947-2014-06143-6},
       URL = {https://doi.org/10.1090/S0002-9947-2014-06143-6},
}

@article {Langer2004,
    AUTHOR = {Langer, Adrian},
     TITLE = {Semistable sheaves in positive characteristic},
   JOURNAL = {Ann. of Math. (2)},
  FJOURNAL = {Annals of Mathematics. Second Series},
    VOLUME = {159},
      YEAR = {2004},
    NUMBER = {1},
     PAGES = {251--276},
      ISSN = {0003-486X},
   MRCLASS = {14F05 (14D20 14J60)},
  MRNUMBER = {2051393},
MRREVIEWER = {Vikram B. Mehta},
       DOI = {10.4007/annals.2004.159.251},
       URL = {https://doi.org/10.4007/annals.2004.159.251},
}

@article {Sun2019,
    AUTHOR = {Sun, Xiaotao},
     TITLE = {Stratified bundles and representation spaces},
   JOURNAL = {Adv. Math.},
  FJOURNAL = {Advances in Mathematics},
    VOLUME = {345},
      YEAR = {2019},
     PAGES = {767--783},
      ISSN = {0001-8708},
   MRCLASS = {14D20 (14F10 14G17)},
  MRNUMBER = {3901673},
MRREVIEWER = {Jon Eivind Vatne},
       DOI = {10.1016/j.aim.2019.01.030},
       URL = {https://doi.org/10.1016/j.aim.2019.01.030},
}

@MISC{StacksProject,
    AUTHOR = {Authors, The Stacks Project},
    TITLE = {Stacks Project},
    URL = {https://stacks.math.columbia.edu/}
}

@book {Saavedra72,
    AUTHOR = {Saavedra Rivano, Neantro},
     TITLE = {Cat\'egories {T}annakiennes},
    SERIES = {Lecture Notes in Mathematics},
    VOLUME = {Vol. 265},
 PUBLISHER = {Springer-Verlag, Berlin-New York},
      YEAR = {1972},
     PAGES = {ii+418},
   MRCLASS = {14L15 (18D10 20G05)},
  MRNUMBER = {338002},
MRREVIEWER = {P.\ Abellanas},
}

@article {Bhatt-Scholze2017,
    AUTHOR = {Bhatt, Bhargav and Scholze, Peter},
     TITLE = {Projectivity of the {W}itt vector affine {G}rassmannian},
   JOURNAL = {Invent. Math.},
  FJOURNAL = {Inventiones Mathematicae},
    VOLUME = {209},
      YEAR = {2017},
    NUMBER = {2},
     PAGES = {329--423},
      ISSN = {0020-9910},
   MRCLASS = {14F05 (14M15 19G12)},
  MRNUMBER = {3674218},
MRREVIEWER = {Marc-Hubert Nicole},
       DOI = {10.1007/s00222-016-0710-4},
       URL = {https://doi.org/10.1007/s00222-016-0710-4},
}

@inproceedings {Katz1972,
    AUTHOR = {Katz, Nicholas M.},
     TITLE = {{$p$}-adic properties of modular schemes and modular forms},
 BOOKTITLE = {Modular functions of one variable, {III} ({P}roc. {I}nternat.
              {S}ummer {S}chool, {U}niv. {A}ntwerp, {A}ntwerp, 1972)},
    SERIES = {Lecture Notes in Math., Vol. 350},
     PAGES = {69--190},
 PUBLISHER = {Springer, Berlin-New York},
      YEAR = {1973},
   MRCLASS = {10D15 (14D20)},
  MRNUMBER = {447119},
MRREVIEWER = {V. V. Shokurov},
}

@incollection {Grothendieck1968,
    AUTHOR = {Grothendieck, A.},
     TITLE = {Crystals and the de {R}ham cohomology of schemes},
 BOOKTITLE = {Dix expos\'{e}s sur la cohomologie des sch\'{e}mas},
    SERIES = {Adv. Stud. Pure Math.},
    VOLUME = {3},
     PAGES = {306--358},
      NOTE = {Notes by I. Coates and O. Jussila},
 PUBLISHER = {North-Holland, Amsterdam},
      YEAR = {1968},
   MRCLASS = {14F40 (14F30)},
  MRNUMBER = {269663},
MRREVIEWER = {T. Oda},
}

@article {Grothendieck1970,
    AUTHOR = {Grothendieck, Alexander},
     TITLE = {Repr\'{e}sentations lin\'{e}aires et compactification profinie des
              groupes discrets},
   JOURNAL = {Manuscripta Math.},
  FJOURNAL = {Manuscripta Mathematica},
    VOLUME = {2},
      YEAR = {1970},
     PAGES = {375--396},
      ISSN = {0025-2611},
   MRCLASS = {20.80 (14.00)},
  MRNUMBER = {262386},
MRREVIEWER = {F. Oort},
       DOI = {10.1007/BF01719593},
       URL = {https://doi.org/10.1007/BF01719593},
}

@book {SGA1,
     TITLE = {Rev\^etements \'etales et groupe fondamental ({SGA} 1)},
    SERIES = {Documents Math\'ematiques (Paris) [Mathematical Documents
              (Paris)]},
    VOLUME = {3},
      NOTE = {S\'eminaire de g\'eom\'etrie alg\'ebrique du Bois Marie
              1960--61. [Algebraic Geometry Seminar of Bois Marie 1960-61],
              Directed by A. Grothendieck,
              With two papers by M. Raynaud,
              Updated and annotated reprint of the 1971 original [Lecture
              Notes in Math., 224, Springer, Berlin; MR0354651 (50 \#7129)]},
 PUBLISHER = {Soci\'et\'e{} Math\'ematique de France, Paris},
      YEAR = {2003},
     PAGES = {xviii+327},
      ISBN = {2-85629-141-4},
   MRCLASS = {14E20 (14-06 14F35)},
  MRNUMBER = {2017446},
}

@article {Biswas-Parameswaran-Kumar2025,
    AUTHOR = {Biswas, Indranil and Kumar, Manish and Parameswaran, A. J.},
     TITLE = {Isomorphism of \'etale fundamental groups lifts to isomorphism
              of stratified fundamental group},
   JOURNAL = {Int. Math. Res. Not. IMRN},
  FJOURNAL = {International Mathematics Research Notices. IMRN},
      YEAR = {2025},
    NUMBER = {12},
     PAGES = {Paper No. rnaf175, 14},
      ISSN = {1073-7928,1687-0247},
   MRCLASS = {14F20 (14F35)},
  MRNUMBER = {4934659},
       DOI = {10.1093/imrn/rnaf175},
       URL = {https://doi.org/10.1093/imrn/rnaf175},
}

@incollection {Esnault2013,
    AUTHOR = {Esnault, H\'el\`ene},
     TITLE = {On flat bundles in characteristic 0 and {$p>0$}},
 BOOKTITLE = {European {C}ongress of {M}athematics},
     PAGES = {301--313},
 PUBLISHER = {Eur. Math. Soc., Z\"urich},
      YEAR = {2013},
      ISBN = {978-3-03719-120-0},
   MRCLASS = {14G17},
  MRNUMBER = {3469128},
MRREVIEWER = {Yukihide\ Takayama},
}

@incollection {Parameswaran-Subramanian2010,
    AUTHOR = {Parameswaran, A. J. and Subramanian, S.},
     TITLE = {On the spectrum of asymptotic slopes},
 BOOKTITLE = {Teichm\"uller theory and moduli problem},
    SERIES = {Ramanujan Math. Soc. Lect. Notes Ser.},
    VOLUME = {10},
     PAGES = {519--528},
 PUBLISHER = {Ramanujan Math. Soc., Mysore},
      YEAR = {2010},
      ISBN = {978-93-80416-00-7},
   MRCLASS = {14H60},
  MRNUMBER = {2667571},
MRREVIEWER = {Cristian\ V.\ Anghel},
}

@misc{Hrushovski2004,
AUTHOR = {Hrushovski, Ehud},
TITLE = {The elementary theory of {F}robenius automorphisms},
YEAR = {2004},
NOTE = {arXiv:math/0406514v2},
}

@misc{Sun-Zhang2025,
AUTHOR = {Sun, Xiaotao and Zhang, Lei},
TITLE = {The étale fundamental group and ${F}$-divided sheaves in characteristic $ p>0$},
YEAR = {2025},
NOTE = {arXiv:2509.24360v1},
}

@misc{Langer-Zhang2025,
AUTHOR = {Langer, Adrian and Zhang, Lei},
TITLE = {${F}$-divided bundles on normal ${F}$-finite schemes},
YEAR = {2025},
NOTE = {arXiv:2510.10582v1},
}

@article {dos-Santos2015,
    AUTHOR = {dos Santos, Jo\~{a}o Pedro},
     TITLE = {The homotopy exact sequence for the fundamental group scheme
              and infinitesimal equivalence relations},
   JOURNAL = {Algebr. Geom.},
  FJOURNAL = {Algebraic Geometry},
    VOLUME = {2},
      YEAR = {2015},
    NUMBER = {5},
     PAGES = {535--590},
      ISSN = {2313-1691},
   MRCLASS = {14F35 (14L15)},
  MRNUMBER = {3421782},
MRREVIEWER = {Matthias Wendt},
       DOI = {10.14231/AG-2015-024},
       URL = {https://doi.org/10.14231/AG-2015-024},
}

@article {Biswas-Parameswaran2022,
    AUTHOR = {Biswas, Indranil and Parameswaran, A. J.},
     TITLE = {Ramified covering maps and stability of pulled-back bundles},
   JOURNAL = {Int. Math. Res. Not. IMRN},
  FJOURNAL = {International Mathematics Research Notices. IMRN},
      YEAR = {2022},
    NUMBER = {17},
     PAGES = {12821--12851},
      ISSN = {1073-7928,1687-0247},
   MRCLASS = {14H60},
  MRNUMBER = {4475268},
MRREVIEWER = {Filippo\ F.\ Favale},
       DOI = {10.1093/imrn/rnab062},
       URL = {https://doi.org/10.1093/imrn/rnab062},
}

@misc{Patel-Weissmann2026,
AUTHOR = {Patel, Aryaman and Weissmann, Dario},
TITLE = {Stratifying moduli spaces of {H}iggs bundles and the {H}itchin morphism},
YEAR = {2026},
NOTE = {arXiv:2601.08597v1},
}

@article {EGA4-2,
    AUTHOR = {Grothendieck, A.},
     TITLE = {\'{E}l\'{e}ments de g\'{e}om\'{e}trie alg\'{e}brique. {IV}.
              \'{E}tude locale des sch\'{e}mas et des morphismes de
              sch\'{e}mas. {II}},
   JOURNAL = {Inst. Hautes \'{E}tudes Sci. Publ. Math.},
  FJOURNAL = {Institut des Hautes \'{E}tudes Scientifiques. Publications
              Math\'{e}matiques},
    NUMBER = {24},
      YEAR = {1965},
     PAGES = {231},
      ISSN = {0073-8301,1618-1913},
   MRCLASS = {14.00},
  MRNUMBER = {199181},
MRREVIEWER = {H.\ Hironaka},
       URL = {http://www.numdam.org/item?id=PMIHES_1965__24__231_0},
}

@article {EGA4-3,
    AUTHOR = {Grothendieck, A.},
     TITLE = {\'{E}l\'{e}ments de g\'{e}om\'{e}trie alg\'{e}brique. {IV}.
              \'{E}tude locale des sch\'{e}mas et des morphismes de
              sch\'{e}mas. {III}},
   JOURNAL = {Inst. Hautes \'{E}tudes Sci. Publ. Math.},
  FJOURNAL = {Institut des Hautes \'{E}tudes Scientifiques. Publications
              Math\'{e}matiques},
    NUMBER = {28},
      YEAR = {1966},
     PAGES = {255},
      ISSN = {0073-8301,1618-1913},
   MRCLASS = {14.55},
  MRNUMBER = {217086},
MRREVIEWER = {J.\ P.\ Murre},
       URL = {http://www.numdam.org/item?id=PMIHES_1966__28__255_0},
}

\end{document}